\documentclass[12pt]{amsart}
\usepackage[margin=1in]{geometry} 
\usepackage{amsmath, amsthm, amsfonts, amssymb, mathrsfs, stmaryrd}
\usepackage[usenames, dvipsnames]{color}
\usepackage[bookmarks, bookmarksdepth=2, colorlinks=true, linkcolor=blue, citecolor=blue, urlcolor=blue]{hyperref}
\usepackage{mathtools}
\usepackage{tikz-cd}
\usepackage{enumitem}

\newcommand{\cA}{\mathcal{A}}

\newcommand{\cC}{\mathcal{C}}
\newcommand{\cB}{\mathcal{VI}}
\newcommand{\VBm}{\mathcal{VB}}

\newcommand{\cD}{\mathcal{D}}

\newcommand{\cE}{\mathcal{E}}

\newcommand{\cH}{\mathcal{H}}

\newcommand{\cI}{\mathcal{I}}

\newcommand{\cL}{\mathcal{L}}

\newcommand{\cM}{\mathcal{L}^{\gen}}

\newcommand{\bN}{\mathbf{N}}

\newcommand{\bQ}{\mathbf{Q}}

\newcommand{\rR}{\mathrm{R}}

\newcommand{\fS}{\mathfrak{S}}

\newcommand{\bV}{\mathbf{V}}

\newcommand{\bZ}{\mathbf{Z}}

\newcommand{\fa}{\mathfrak{a}}

\newcommand{\fb}{\mathfrak{b}}

\newcommand{\fm}{\mathfrak{m}}

\newcommand{\Sm}{A/\fm^{[q]}}

\renewcommand{\phi}{\varphi}
\renewcommand{\emptyset}{\varnothing}

\newcommand{\lw}{{\textstyle \bigwedge}}


\newcommand{\VI}{\operatorname{VI}}
\newcommand{\VB}{\operatorname{VB}}
\newcommand{\VIg}{\cB^\gen} 
\DeclareMathOperator{\im}{im} 
\DeclareMathOperator{\coker}{coker}

\DeclareMathOperator{\ext}{Ext}

\DeclareMathOperator{\End}{End}

\DeclareMathOperator{\Sym}{Sym}

\DeclareMathOperator{\Mod}{Mod}

\DeclareMathOperator{\Hom}{Hom}
\DeclareMathOperator{\Rep}{Rep}

\newcommand{\id}{\mathrm{id}}

\newcommand{\pol}{\mathrm{pol}}

\newcommand{\tors}{\mathrm{tors}}

\newcommand{\GL}{\mathbf{GL}}

\makeatletter
\@addtoreset{equation}{section}
\makeatother

\numberwithin{equation}{section}
\newtheorem{theorem}[equation]{Theorem}
\newtheorem{proposition}[equation]{Proposition}
\newtheorem{lemma}[equation]{Lemma}
\newtheorem{corollary}[equation]{Corollary}

\theoremstyle{definition}
\newtheorem{rmk}[equation]{Remark}
\newenvironment{remark}[1][]{\begin{rmk}[#1] \pushQED{\qed}}{\popQED \end{rmk}}
\newtheorem{eg}[equation]{Example}
\newenvironment{example}[1][]{\begin{eg}[#1] \pushQED{\qed}}{\popQED \end{eg}}
\newtheorem{defnaux}[equation]{Definition}
\newenvironment{definition}[1][]{\begin{defnaux}[#1]\pushQED{\qed}}{\popQED \end{defnaux}}

\makeatletter
\renewcommand{\thesubsection}{%
  \ifnum\c@subsection<1 \@arabic\c@section
  \else \thesection.\@arabic\c@subsection
  \fi
}
\makeatother

\renewcommand{\theenumi}{\alph{enumi}}

\DeclareMathOperator{\soc}{soc}
\newcommand{\FI}{\mathbf{FI}}
\newcommand{\gen}{\mathrm{gen}}
\DeclareMathOperator{\Irr}{Irr}
\newcommand{\lf}{\mathrm{lf}}
 \subjclass[2020]{20C32, 20C33, 18E10 (Primary) 20G05, 18E35, 18E40 (Secondary)}
 \title[Axiomatics for generic categories]{Axiomatics for generic categories in positive characteristic}

 \author{Karthik Ganapathy}
 \address{Department of Mathematics, University of California, San Diego, CA}
\email{\href{mailto:kganapathy@ucsd.edu}{kganapathy@ucsd.edu}}
\urladdr{\url{https://sites.google.com/view/karthik-ganapathy/}}

\begin{document}

\begin{abstract}
Generic categories arising in representation stability and equivariant commutative algebra are difficult to analyze in positive characteristic, as these categories rarely admit enough (or even any) finitely generated injectives. Our main observation is that they are nonetheless an increasing union of Serre subcategories, each with finitely many simples and enough finite length injectives. We package this into an axiomatic framework which is widely applicable. In particular, we use this to analyze (1) VI-modules in non-describing characteristic, recovering Nagpal's classification of simple generic VI-modules, and (2) GL-equivariant modules over truncated polynomial rings and exterior algebras in infinitely many variables.
\end{abstract}
\maketitle
\section{Introduction}\label{s:intro} 
A central goal of representation stability is the analysis of ``generic categories'' of module categories, i.e., their Serre quotient by the subcategory of torsion objects. The class of an object in the generic category retains only its generic (read asymptotic/dominating) behaviour. An early success in this regard was Sam--Snowden's description of the generic category of FI-modules over fields of characteristic zero \cite{ss16gl}: this category is locally artinian, and the simple objects are naturally indexed by partitions. Given a finitely generated FI-module $V$, the partitions indexing the simple constituents of its image in the generic category correspond precisely to the (padded) partitions appearing in the uniform decomposition of $V_n$ for $n \gg 0$ -- the phenomenon from which the subject takes its name.

In positive characteristic, it essentially follows from Harman's work \cite{har15per} that the generic category of FI-modules is locally artinian. Nagpal \cite{nag21vi} studied this category in detail, classifying the simple objects; in fact his results hold for the generic category of VI-modules in non-describing characteristic, of which FI-modules are a degeneration. The integral version was described combinatorially by Patzt--Wiltshire-Gordon \cite{pw24tails}, and Kriz computed examples of objects in the generic category exhibiting pathological behaviour away from characteristic zero \cite{kri22gen, kri23gen}. 
More recently, over fields of characteristic zero, Snowden \cite{sno21stable} extended many results about the generic category of FI-modules to an arbitrary finitely generated polynomial twisted commutative algebra; these generic categories are closely related to the representation theory of certain linear-oligomorphic groups that he introduced along with Harman \cite{hs24tensor} in the course of their groundbreaking program to construct new symmetric tensor categories.

This paper grew out of our desire to obtain a more transparent proof of the main theorems from Part II of Nagpal's work on the generic category of VI-modules in non-describing characteristic \cite{nag21vi}, ideally using only the results of Part~I \cite{nag19vi}. Nagpal establishes several strong structural results in Part~I already, so it seemed unlikely to us that Part~II had to be as technical as it is. This expectation was reinforced when Snowden's work \cite{sno21stable} appeared, since it uses only analogues of a fraction of the results Nagpal establishes in the VI-setting. 

\subsection{Results} Our main contribution is an axiomatic framework which is flexible enough to analyze many generic categories of interest in representation stability and equivariant commutative algebra, especially over fields of positive characteristic. This occupies Section~\ref{s:axioms}. The relevant results from Grothendieck abelian category theory we use in this section are collected in Appendix~\ref{s:appx}. The framework we develop allows us to easily recover Nagpal's results from Part~II \cite{nag21vi} in Section~\ref{s:vi}. 

To show the versatility of our theory, we also analyze the generic category of certain nilpotent $\GL$-algebras in positive characteristic. 
Specifically, we prove that generic category of $\GL$-equivariant modules over truncated polynomial rings and exterior algebras are locally artinian and classify the simple objects. The analogues of Nagpal's Part~I results required in this section were previously established by the author \cite{gan22ext, gan24monomial, gan25pol}. 

The difficulty with analyzing generic categories in positive characteristic stems from them lacking finitely generated injectives. Our key observation is that these generic categories are an increasing union of Serre subcategories, each of which has finitely many simples and admits enough finite length injectives. This is reminiscent of the algebraic representation theory of the orthogonal and symplectic groups, where finite-dimensional injectives are available only upon restricting to polynomial representations of bounded degree. We however caution that these generic categories are often not highest weight categories, in the sense of Cline--Parshall--Scott \cite{cps88high} or Brundan--Stroppel \cite{bs24semi}, as one easily sees for VI-modules in (positive) non-describing characteristic.

\subsection{Relation to other work}
Our results lie at the confluence of three historically distinct viewpoints on stability patterns in representation theory.
\subsubsection{Stable representation theory}
The generic category of FI-modules is equivalent to the category of smooth representations of $\fS_{\infty}$. Similarly, the generic category of VI-modules in non-describing characteristic is equivalent to the category of smooth representations of $\GL_{\infty}(\mathbf{F}_q)$. Both groups are \emph{oligomorphic}: a permutation group $G \subset \Sym(\Omega)$ is oligomorphic if for every $n \geq 0$ there is a finite subset $X_n \subset \Omega$ such that every $n$-element subset $Y \subset \Omega$ satisfies $gY \subset X_n$ for some $g \in G$; equivalently, $G$ has finitely many orbits on $\Omega^n$ for all $n$.

Harman and Snowden \cite{hs24tensor} recently introduced a linear analogue: a subgroup $G \subset \GL(k^{\infty})$ is \emph{linear-oligomorphic} if for every $n \geq 0$ there is a finite-dimensional subspace $W \subset k^{\infty}$ such that every $n$-dimensional subspace $U \subset k^{\infty}$ satisfies $gU \subset W$ for some $g \in G$. The motivating examples are stabilizers of tensors like the infinite-rank orthogonal group. The algebraic representation theory of linear-oligomorphic groups is well-behaved -- for instance, at least in characteristic zero, the category is locally artinian.

We hope that the framework of Section~\ref{s:axioms} is a step toward the representation theory of linear-oligomorphic groups in positive characteristic, in the spirit of \cite{sno21stable}. Even for the infinite-rank classical groups, the algebraic representation theory in positive characteristic appears not to have been developed, in contrast with characteristic zero, where it is understood from several points of view \cite{ss15stab, dps16koszul}. Our results in Section~\ref{s:gl} can be reinterpreted as analyzing (partial) Frobenius kernels of the infinite-rank general affine group.

\subsubsection{Generic representation theory}
The study of functors from finite-dimensional vector spaces over a finite field to vector spaces over a field $k$, known as generic representation theory, has advanced steadily for three decades \cite{kuh94gen, pow98ii, dja07fonc, kuhn15gen, dg24gen} allowing to streamline several important results about the Steenrod algebra proved in the 1980s (see \cite{kuh00gen}). One of its central problems, the Lannes--Schwartz artinian conjecture \cite[Conjecture~4.6]{kuh00gen}, was settled independently by Putman--Sam \cite{ps17duke} and Sam--Snowden \cite{ss17gro}. These proofs crucially use the category VI and in \cite{ss17gro}, also the category of finite sets and surjections. The sharper form of the conjecture \cite[Conjecture~6.8]{kuh00gen} predicting the Krull dimension of the projective objects remains open (see also \cite[Page~223]{pow00surv}). Although we only focus on the non-describing characteristic case, we expect the systematic study of functor categories to be a productive source of techniques, much as the passage to finite sets and surjections was for the Artinian conjecture.

\subsubsection{Global representation theory}
The papers \cite{ps22outer} and \cite{bbpsw26grt} initiate the study of representations of outer automorphism groups of compatible families of finite groups, partially motivated by equivariant homotopy theory. The category of VI-modules is a foundational example in this theory. The papers \cite{bbpsw25ttvi} and \cite{xu26vi} study the tensor-triangular geometry and tensor-abelian geometry of global representations, albeit over fields of characteristic zero. Extending their results to positive characteristic would be interesting. 

\begin{remark}
    A ubiquitous result in characteristic zero about the module categories we discussed above is that projective objects are injective; This comes into play in both \cite{sno21stable} and \cite{bbpsw26grt}. The failure of this property in positive characteristic is closely related to their generic categories being difficult to analyze. Our framework provides a partial replacement to this property: in the case of $\VI$-modules for instance, our main result easily implies that a projective object generated in degree $d$ is injective in the Serre subcategory generated by the torsion objects and modules generated in degree $\leq d$. 
\end{remark}

\subsection{Notation} 
For a positive integer $n$, we let $[n] = \{1,2,\ldots, n\}$. All tensor products $\otimes$ are over the ground field unless specified otherwise.

\subsection*{Acknowledgements} I thank Nate Harman and Andrew Snowden for helpful discussions.

\subsection*{Disclosure of LLM use} {I used a publicly available LLM for copyediting.}\footnote{The LLM flagged a subtle error I made: in my original proof of Lemma~\ref{lem:artinian}, I wrongly assumed that the localizing subcategory generated by finitely many artinian objects is locally artinian. This holds under a noetherian assumption on the ambient category by Proposition~\ref{prop:AppA} (see also Example~\ref{exm}), so I added this in Section~\ref{s:axioms} as Property (A11). It may be necessary to get around this assumption to study linear-oligomorphic groups in positive characteristic given prior results on non-noetherianity of $\GL$-algebras in positive characteristic \cite{gan24noe, gan25noe}; these results however do not preclude their generic categories from being noetherian, or even artinian.}

\section{Filtering an abelian category by Serre subcategories}\label{s:axioms}
Throughout this section, we fix a field $k$.

The framework introduced here is, to a large extent, inspired by a similar framework \cite[Section~4.1]{sno21stable} of Sam--Snowden, used in op.~cit.~to study generic categories of finitely generated polynomial $\GL$-algebras in characteristic zero.

\subsection{Framework}
Let $\cA$ be a $k$-linear Grothendieck abelian category.

Let $\cL = \bigsqcup_{n \in \bZ} \cL_n$ be a graded set; we assume $\cL_{n} = \emptyset$ for $n < 0$.
Given $x \in \cL$, let $|x|$ be the unique integer $n$ such that $x \in \cL_n$. Define $\cL_{\leq n} = \bigsqcup_{i \leq n} \cL_i$ and analogously the set $\cL_{< n}$. 

Assume that for each $\lambda \in \cL$ there are distinguished nonzero finitely generated objects $K_{\lambda} \subset I_{\lambda}$ in $\cA$.

\begin{definition}
For $n \in \bZ$, an object $C$ in $\cA$ is
\begin{itemize}
    \item   \emph{$K$-filtered of degree $n$} if there exists a finite filtration $0 = C^0 \subset C^1 \subset \ldots \subset C^t = C$ such that $C^{i}/C^{i-1} \cong K_{\lambda(i)}$ for some $\lambda(i) \in \cL_{n}$ for all $i$. 
    \item  \emph{$K$-filtered of degree $\leq n$} if there exists a finite filtration $0 = C^0 \subset C^1 \subset \ldots \subset C^t = C$ such that $C^{i}/C^{i-1} \cong K_{\lambda(i)}$ for some $\lambda(i) \in \cL_{\leq n}$ for all $i$.
    \item  \emph{$K$-filtered} if there exists $n \in \bN$ such that $C$ is $K$-filtered of degree $\leq n$.
    \item \emph{gr-subK of degree $\leq n$} (resp.~ $< n$) if there exists a finite filtration $0 = C^0 \subset C^1 \subset \ldots \subset C^t = C$ together with, for each $i$, an injection $C^{i}/C^{i-1} \hookrightarrow D^i$ where $D^i$ is $K$-filtered of degree $\leq n$ (resp.~$< n$). \qedhere
\end{itemize}
\end{definition}

\begin{lemma}\label{lem:closure}
 Any subobject of a $K$-filtered object of degree $\leq n$ is gr-subK of degree $\leq n$. Moreover, the extension of two objects which are $K$-filtered of degree $\leq n$ (resp.~degree $n$) is $K$-filtered of degree $\leq n$ (resp.~degree $n$). Similarly, the extension of two objects which are gr-subK of degree $\leq n$ is also gr-subK of degree $\leq n$.
\end{lemma}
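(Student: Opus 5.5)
The three assertions are all formal consequences of the definitions, so I will prove them one at a time directly from the filtrations.

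\emph{Subobjects.} Let $C$ be $K$-filtered of degree $\leq n$ with filtration $0 = C^0 \subset \cdots \subset C^t = C$, and let $B \subset C$. I will take the induced filtration $B^i = B \cap C^i$ on $B$. Then $B^i/B^{i-1} = (B\cap C^i)/(B \cap C^{i-1})$, and the map $B \cap C^i \to C^i/C^{i-1}$ has kernel exactly $B \cap C^{i-1}$. This gives an injection $B^i/B^{i-1} \hookrightarrow C^i/C^{i-1} \cong K_{\lambda(i)}$. Finally, $K_{\lambda(i)}$ is itself $K$-filtered of degree $\leq n$, via the one-step filtration. This is exactly the data required for $B$ to be gr-subK of degree $\leq n$.

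\emph{Extensions.} Suppose $0 \to C' \to C \xrightarrow{\pi} C'' \to 0$ is exact, $C'$ has filtration $(C'^i)_{i \le s}$, and $C''$ has filtration $(C''^j)_{j \le r}$, each of the relevant type. I will splice the two filtrations. On $C$, take first the images of $C'^i$ for $i \leq s$, which end at $C'$, and then the preimages $\pi^{-1}(C''^j)$ for $j \leq r$, which start at $C'$ and end at $C$. The subquotients of the first half are $C'^i/C'^{i-1}$. By the third isomorphism theorem, the subquotients of the second half are $\pi^{-1}(C''^j)/\pi^{-1}(C''^{j-1}) \cong C''^j/C''^{j-1}$. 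The resulting filtration of $C$ has the same multiset of subquotients as the two original filtrations together. The same multiset argument, together with the injections $D^i$ attached to each subquotient, handles all three cases at once: degree $\leq n$, degree exactly $n$, and gr-subK of degree $\leq n$.

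I do not expect a real obstacle. The only point that needs care is checking the kernel computation in the subobject case, i.e.\ that $B\cap C^i \to C^i/C^{i-1}$ really has kernel $B\cap C^{i-1}$. This holds because $\ker(C^i \to C^i/C^{i-1}) = C^{i-1}$, so the kernel of the restriction to $B\cap C^i$ is $B\cap C^i\cap C^{i-1} = B\cap C^{i-1}$. Everything else is standard in any abelian category.
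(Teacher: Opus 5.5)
Your proof is correct and is the routine verification the paper leaves implicit (its proof is just ``This is clear''). Intersecting the filtration with the subobject handles the first claim, and splicing the filtrations along the extension, which preserves the list of subquotients, handles the three extension cases.
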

\begin{proof}
This is clear.
\end{proof}

For $n \geq 0$, let $\cA_{\leq n} \subset \cA$ be the localizing subcategory of $\cA$ generated by the objects $K_{\lambda}$ with $\lambda \in \cL_{\leq n}$. Put $\cA_{\leq n} = 0$ for $n < 0$. 
The category $\cA_{\leq n}$ is Grothendieck abelian for all $n\geq 0$, being a localizing subcategory of $\cA$.

Consider the following properties:
{\renewcommand{\theenumi}{A\arabic{enumi}}\renewcommand{\labelenumi}{\textup{(\theenumi)}}
\begin{enumerate}
\item The set $\cL_n$ is finite for all $n \in \bN$.
\item For finitely generated objects $C$ and $D$, we have $\dim_k \ext^i_{\cA}(C,D) < \infty$ for $i \in \{0,1\}$. 
\item For $\lambda, \mu \in \cL$ with $|\lambda| < |\mu|$ we have $\ext^i_{\cA}(K_{\lambda}, K_{\mu}) = 0$ for all $i \geq 0$.
\item For $\lambda, \mu \in \cL$ with $|\lambda| \leq |\mu|$ we have $\ext^i_{\cA}(K_{\lambda}, I_{\mu}) = 0$ for all $i > 0$.
\item The quotient $I_{\lambda}/K_{\lambda}$ is $K$-filtered of degree $|\lambda|$.
\item For all $\lambda \in \cL$, the $k$-algebra $\End_{\cA}(I_{\lambda})$ is a local ring.
\item Every nonzero morphism $K_{\lambda} \to I_{\gamma}$ with $|\gamma| = |\lambda|$ is injective.
\item No morphism $K_{\lambda} \to C$ where $C$ is gr-subK of degree $< |\lambda|$ is injective.
\item Any morphism $K_{\lambda} \to I_{\mu}$ with $|\mu| = |\lambda|$ but $\mu \ne \lambda$ is zero.
\item Every finitely generated object $M$ embeds into a $K$-filtered object.
\item The category $\cA$ is locally noetherian.
\end{enumerate}
}

Assuming Property (A5) holds, we have that for $\lambda \in \cL_n$, the object $I_{\lambda}$ is $K$-filtered of degree $n$ so $I_{\lambda}$ is contained in $\cA_{\leq n}$. We note some other immediate consequences.


\begin{lemma}\label{lem:filtration1}
Assume Property (A3) holds. Let $C$ be a $K$-filtered object. There exists a filtration
    \begin{displaymath}
    0 = C^{\le -1} \subset C^{\le 0} \subset C^{\le 1} \subset \ldots \subset C^{\le t} = C
    \end{displaymath}
    such that for all $0 \leq i \leq t$, the quotient $C^{\le i}/C^{\le{i - 1}}$ is a $K$-filtered object of degree $i$.
\end{lemma}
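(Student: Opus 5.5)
We will reorder an arbitrary $K$-filtration of $C$ so that the lower-degree pieces come first, by repeatedly swapping adjacent layers that are out of order. By definition $C$ has a finite filtration $0 = C^0 \subset \cdots \subset C^s = C$ with $C^j/C^{j-1} \cong K_{\lambda(j)}$. We argue by induction on the number of \emph{inversions}, meaning pairs $j < j'$ with $|\lambda(j)| > |\lambda(j')|$. If there are no inversions, the degrees $|\lambda(j)|$ are weakly increasing in $j$. In that case we take $C^{\le i}$ to be the largest $C^j$ with $|\lambda(j)| \le i$. Then $C^{\le i}/C^{\le i-1}$ is filtered by exactly those $K_{\lambda(j)}$ with $|\lambda(j)| = i$, so it is $K$-filtered of degree $i$; if there are no such $j$ it is zero, which is the empty filtration. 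We take $t$ to be the maximal degree.

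For the inductive step, suppose there is an inversion. Then there is an adjacent index $j$ with $|\lambda(j)| > |\lambda(j+1)|$. Consider the subquotient $E = C^{j+1}/C^{j-1}$, which sits in a short exact sequence
\[
0 \to K_{\lambda(j)} \to E \to K_{\lambda(j+1)} \to 0 .
\]
This extension is classified by an element of $\ext^1_{\cA}(K_{\lambda(j+1)}, K_{\lambda(j)})$. Since $|\lambda(j+1)| < |\lambda(j)|$, Property (A3) says this group vanishes. Hence the sequence splits: $E \cong K_{\lambda(j)} \oplus K_{\lambda(j+1)}$.

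Next we let $D \subset C^{j+1}$ be the preimage of the summand $K_{\lambda(j+1)} \subset E$. We replace $C^j$ by $D$ and keep every other step of the filtration unchanged. The new layers at positions $j$ and $j+1$ are now $K_{\lambda(j+1)}$ and $E/K_{\lambda(j+1)} \cong K_{\lambda(j)}$, while all other layers are untouched. This swap removes exactly one inversion, namely the pair $(j, j+1)$, and the relative order of all other pairs is preserved. Induction on the number of inversions then finishes the proof.

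There is no real obstacle. The only point needing care is to use (A3) with $i = 1$ and in the correct direction: we need $\ext^1(K_{\text{low}}, K_{\text{high}}) = 0$, so that a higher-degree sub with a lower-degree quotient splits. That is precisely the configuration that occurs at an inversion, so the swap is always possible.
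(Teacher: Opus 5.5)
Your proposal is correct and follows the paper's own proof: you split the adjacent out-of-order layer $C^{j+1}/C^{j-1}$ using the vanishing of $\ext^1_{\cA}(K_{\lambda(j+1)}, K_{\lambda(j)})$ from (A3), swap the two layers, iterate, and then coarsen. Your induction on the number of inversions and your explicit coarsening $C^{\le i}$ spell out what the paper compresses into ``iterating this\dots upon coarsening the filtration as necessary.''
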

\begin{proof}
Let $0 = C^0 \subset C^1 \subset C^2 \subset \ldots \subset C^s = C$ be a filtration of $C$ such that $C^i / C^{i-1} \cong K_{\mu(i)}$ for all $i \in [s]$. Assume there exists an index $j$ such that $\vert\mu(j)\vert > \vert\mu(j+1)\vert$. We then have a short exact sequence
\begin{displaymath}
    0 \to C^j/C^{j-1} \to C^{j+1}/C^{j-1} \to C^{j+1}/C^j \to 0.
\end{displaymath}
However, by property~(A3) we have $\ext^1_{\cA}(K_{\mu(j+1)}, K_{\mu(j)}) = 0$, so the short exact sequence splits. So we may write $C^{j+1}/C^{j-1} \cong C^{j}/C^{j-1} \oplus N/C^{j-1}$ for a subobject $C^{j-1} \subset N \subset C^{j+1}$ with the first summand isomorphic to $K_{\mu(j)}$ and the second summand isomorphic to $K_{\mu(j+1)}$. We may now define a new filtration
\begin{displaymath}
    0 = \widetilde{C}^{0} \subset \widetilde{C}^{1} \subset \ldots \subset \widetilde{C}^{s} = C
\end{displaymath}
with $\widetilde{C}^{i} = C^i$ for $i \ne j$ and $\widetilde{C}^{j} = N$. Iterating this, we obtain the result upon coarsening the filtration as necessary.
\end{proof}

\begin{lemma}\label{lem:auxiliary}
Assume Properties (A1)--(A11) hold. 
\begin{enumerate}
    \item Let $\lambda \in \cL$ and $C$ be a $K$-filtered object of degree $|\lambda|$. A non-injective map $\phi \colon K_{\lambda} \to C$ is zero. 
    \item Let $\lambda, \mu \in \cL$ satisfy $|\lambda| < |\mu|$. Then $\ext^i_{\cA}(K_{\lambda}, I_{\mu}) = 0$ for all $i \geq 0$. 
\end{enumerate}

\end{lemma}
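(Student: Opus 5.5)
Part (a) will go by induction on the length $t$ of a filtration $0 = C^0 \subset \cdots \subset C^t = C$ with $C^i/C^{i-1} \cong K_{\mu(i)}$, $|\mu(i)| = |\lambda|$. Let $\phi\colon K_\lambda \to C$ be non-injective. If $t=1$, then $C \cong K_\mu$ with $|\mu|=|\lambda|$, and $K_\mu \subset I_\mu$. Composing $\phi$ with this inclusion gives a non-injective map $K_\lambda \to I_\mu$, which is zero by (A7). Since the inclusion is a monomorphism, $\phi=0$. For the inductive step, consider the composite $\bar\phi\colon K_\lambda \to C \to C/C^{1}$; the target is $K$-filtered of degree $|\lambda|$ of length $t-1$.

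If $\bar\phi$ is non-injective, induction gives $\bar\phi = 0$. Then $\phi$ factors through $C^1 \cong K_{\mu(1)}$, the factored map is still non-injective, and the base case gives $\phi=0$. If $\bar\phi$ is injective, then $\phi$ is injective too (since $\ker\phi \subset \ker\bar\phi$), contradicting the hypothesis. So $\phi=0$ in all cases, and this step is routine.

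For part (b), we have $|\lambda| < |\mu|$. By (A4), $\ext^i_\cA(K_\lambda, I_\mu)=0$ for $i>0$, so only $\Hom_\cA(K_\lambda, I_\mu)=0$ remains. By (A5) there is an exact sequence $0 \to K_\mu \to I_\mu \to I_\mu/K_\mu \to 0$ whose quotient is $K$-filtered of degree $|\mu|$. Applying $\Hom_\cA(K_\lambda,-)$ gives
\[ \Hom_\cA(K_\lambda, K_\mu) \to \Hom_\cA(K_\lambda, I_\mu) \to \Hom_\cA(K_\lambda, I_\mu/K_\mu). \]
The left term vanishes by (A3). For the right term, I will show by induction on filtration length (a five-lemma style argument with left exactness of $\Hom$) that $\Hom_\cA(K_\lambda, D)=0$ for every $D$ that is $K$-filtered of degree $|\mu|$; each graded piece contributes $\Hom_\cA(K_\lambda, K_{\nu})=0$ by (A3), as $|\nu|=|\mu|>|\lambda|$. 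Hence the middle term vanishes.

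The only point needing care is that (A3) is available for $i=0$ (it is, since it is stated for all $i \geq 0$), so part (b) requires no further input. The main obstacle, such as it is, lies in part (a): making sure the base case really uses (A7) through the embedding $K_\mu \subset I_\mu$ rather than needing a separate statement about maps $K_\lambda \to K_\mu$.
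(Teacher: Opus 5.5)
Your proof is correct and follows essentially the same route as the paper. In (a), the paper takes the minimal $r$ with $\im\phi\subset C^r$ and applies (A7) to the composite $K_\lambda\to C^r/C^{r-1}\cong K_{\lambda(r)}\hookrightarrow I_{\lambda(r)}$; this is the same single-layer d\'evissage you organize as an induction peeling off $C^1$. In (b), the paper just cites (A3)--(A5) and d\'evissage, which you spell out.

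(Incidentally, (A4) is not really needed in (b): since $I_\mu$ is $K$-filtered of degree $|\mu|$ by (A5), (A3) and d\'evissage already give $\ext^i_{\cA}(K_\lambda, I_\mu)=0$ for all $i\geq 0$.)
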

\begin{proof}
   (a)  Let $0 = C^0 \subset C^1 \subset \ldots \subset C^t = C$ be a filtration such that $C^{i}/C^{i-1} \cong K_{\lambda(i)}$ with $\lambda(i) \in \cL_{|\lambda|}$ for all $i \in [t]$. Let $r$ be the minimal index such that $\im \phi \subset C^r$. Assume first that $r > 0$. In this case, the composition $K_{\lambda} \to C^r \to C^r/C^{r-1} \cong K_{\lambda(r)} \hookrightarrow I_{\lambda(r)}$ is nonzero and so injective by (A7) but we assumed $\phi$ is not injective, a contradiction. So $r= 0$ which implies $\phi = 0$, as required.
    
   (b) This is immediate from Properties (A3)--(A5) and d{\'e}vissage. 
\end{proof}

We introduce one final object before stating our main theorem. 
\begin{definition}\label{defn:HLambda}
For each $\lambda \in \cL$, let $H_{\lambda}$ be the intersection of the kernels of all morphisms $K_{\lambda} \to C$ where $C$ is a finitely generated object in $\cA_{\leq |\lambda|-1}$. 
\end{definition}
It is a priori not clear that $H_{\lambda} \ne 0$. Our main theorem is:
\begin{theorem}\label{thm:main} 
Assume Properties (A1)--(A11) are satisfied. For all $n \in \bN$, we have the following:
   \begin{enumerate}
        \item Every finitely generated object in $\cA_{\leq n}$ has finite length.
        \item The set $\{H_{\lambda}\}_{\lambda \in \cL_{\leq n}}$ is a complete set of pairwise non-isomorphic simples of $\cA_{\leq n}$.
        \item For $\lambda \in \cL_{\leq n}$, the object $K_{\lambda}/H_{\lambda}$ lies in $\cA_{\leq |\lambda|-1}$.
        \item For $\lambda \in \cL_{\leq n}$, the injective envelope $J_{\lambda, n}$ of $H_{\lambda}$ in $\cA_{\leq n}$ is of finite length.
        \item For $\lambda \in \cL_n$, we have $J_{\lambda, n} = I_{\lambda}$.
        \item For $\lambda \in \cL_{\leq n}$, the indecomposable injective $J_{\lambda, n}$ is gr-subK of degree $\leq n$.
    \end{enumerate}
\end{theorem}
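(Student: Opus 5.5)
We argue by induction on $n$; the case $n<0$ is vacuous since $\cA_{\leq n}=0$. Assume (a)--(f) hold for $n-1$, and fix $\lambda\in\cL_n$.

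\textbf{Step 1: the objects $H_\lambda$ and (c).} Since $K_\lambda$ is finitely generated and $\cA$ is locally noetherian (A11), $K_\lambda$ is noetherian. We claim $H_\lambda$ is the kernel of a \emph{single} map $K_\lambda\to C$ with $C$ finitely generated in $\cA_{\leq n-1}$. Indeed, a finite intersection of such kernels is again the kernel of a map into a finite direct sum, which is still finitely generated in $\cA_{\leq n-1}$. So the kernels form a downward directed family. The quotients $K_\lambda/\ker$ embed in finitely generated objects of $\cA_{\leq n-1}$, which have finite length by induction (a). Their lengths are bounded: by the noetherian property together with (A2), $\dim_k \Hom(K_\lambda, -)$ is finite, so I expect boundedness via (A1)/(A2). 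The directed family therefore stabilizes, and $K_\lambda/H_\lambda\hookrightarrow C\in\cA_{\leq n-1}$, giving (c). This also shows $H_\lambda\neq 0$: otherwise $K_\lambda$ would embed into $C$, and by (A10) and induction (f), $C$ is gr-subK of degree $<n$ (embed it into a finite direct sum of $J_{\mu,n-1}$). That contradicts (A8).

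\textbf{Step 2: simplicity and injectivity in $\cA_{\leq n}$.} We show $I_\lambda$ is injective in $\cA_{\leq n}$. By Baer-type arguments in a locally noetherian category (Appendix), it suffices to have $\ext^1(X,I_\lambda)=0$ for $X$ finitely generated in $\cA_{\leq n}$. Such $X$ has a filtration by subquotients of $K_\mu$ with $|\mu|\leq n$ (noetherianity), which reduces the problem to the vanishing of $\ext^1(K_\mu,I_\lambda)$ together with $\Hom$-control. These are supplied by (A4) and Lemma~\ref{lem:auxiliary}(b), which give $\ext^{\ge0}(\cA_{\leq n-1}\text{-objects},I_\lambda)=0$ via dévissage through the induction hypothesis (finite length, simples $H_\mu$, and $H_\mu\subset K_\mu$ for the quotients). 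By (A6), $I_\lambda$ is indecomposable.

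Next, any nonzero subobject $S\subset H_\lambda$ has $K_\lambda/S$ with no nonzero map to $I_\lambda$: such a map would give $K_\lambda\to I_\lambda$ that is nonzero and non-injective, contradicting (A7). Since $K_\lambda/S$ is built from $K_\lambda/H_\lambda\in\cA_{\leq n-1}$ and $H_\lambda/S$, we need to show $H_\lambda/S=0$. For this I would use that every finitely generated object of $\cA_{\leq n}$ embeds, via (A10) and Lemma~\ref{lem:filtration1}, into an object whose top layer is $K$-filtered of degree $n$. That layer embeds into a sum of $I_\mu$, and by (A9) only $I_\lambda$ receives $K_\lambda$. This forces $H_\lambda/S$ to map to $\cA_{\leq n-1}$, contradicting minimality of $H_\lambda$. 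Hence $H_\lambda$ is simple and $I_\lambda$, being indecomposable injective containing $H_\lambda$, equals $J_{\lambda,n}$, which proves (e). Pairwise non-isomorphism follows from (A9) for equal degrees, and from $\Hom(H_\lambda,\cA_{\leq n-1})=0$ for different degrees.

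\textbf{Step 3: remaining claims.} For completeness in (b), a simple $S$ of $\cA_{\leq n}$ not in $\cA_{\leq n-1}$ embeds into a $K$-filtered object. Its image in the degree-$n$ layer maps nonzero to some $I_\lambda$, whose socle is $H_\lambda$ since it is the injective hull, so $S\cong H_\lambda$. For (a), a finitely generated object of $\cA_{\leq n}$ embeds into a finite sum of $I_\lambda$ ($|\lambda|=n$) extended by injectives of $\cA_{\leq n-1}$. All of these have finite length: $I_\lambda$ is $K$-filtered by (A5), each $K_\mu$ has finite length by (c) and induction, and the $\cA_{\leq n-1}$-injectives are handled by induction (d). For (d) and (f) with $|\lambda|<n$, $J_{\lambda,n}$ is constructed as a maximal essential extension of $J_{\lambda,n-1}$ by objects with degree-$n$ layers. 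Its length is bounded using (A2) finiteness of $\ext^1(H_\mu,J_{\lambda,n-1})$ and (A1), and it is gr-subK of degree $\leq n$ by Lemma~\ref{lem:closure}.

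The main obstacle is Step 2's injectivity and simplicity argument, specifically controlling extensions from degree-$<n$ pieces. The finite-length bound in (d) for $|\lambda|<n$ is also delicate.
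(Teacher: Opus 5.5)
Your outline has most of the right ingredients, but there are two genuine gaps, and the ordering in Step 2 is circular.

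\textbf{Step 2 is circular.} You reduce $\ext^1(X,I_\lambda)=0$ to ``$\ext^1(K_\mu,I_\lambda)=0$ plus Hom-control''. This works for the pieces lying in $\cA_{\le n-1}$, and your dévissage giving $\ext^{\ge 0}(Y,I_\lambda)=0$ for finitely generated $Y\in\cA_{\le n-1}$ is correct. It does not work for subquotients of $K_\mu$ with $|\mu|=n$, and you need all $\mu\in\cL_n$, not just $\lambda$. Modulo $K_\mu/H_\mu$ these are subquotients of $H_\mu$, and $\ext^1$-vanishing does not descend to them. For $H_\mu/A$ you would need every map $A\to I_\lambda$ to extend over $H_\mu$, which is itself an injectivity statement. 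So you need the $H_\mu$ to be simple first. Yet your simplicity argument embeds the degree-$n$ layer into a sum of $I_\mu$'s, which presupposes the injectivity you are proving.

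The paper breaks this circle by proving simplicity with no injectivity at all. For $0\ne N\subset H_\lambda$:
\begin{enumerate}
\item embed $K_\lambda/N$ into a $K$-filtered object using (A10), and arrange its layers by degree using Lemma~\ref{lem:filtration1};
\item kill the degree $>n$ part with (A3);
\item kill the degree-$n$ layer with Lemma~\ref{lem:auxiliary}(a). That lemma says a non-injective map from $K_\lambda$ to a $K$-filtered object of degree $|\lambda|$ is zero; its proof looks at the first filtration step containing the image and applies (A7) to $K_{\lambda(r)}\subset I_{\lambda(r)}$.
\end{enumerate}
Then $K_\lambda/N$ embeds in a finitely generated object of $\cA_{\le n-1}$, so $H_\lambda\subset N$.

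Once every $K_\mu$ has finite length, Proposition~\ref{prop:AppA} gives (a), and Lemma~\ref{lem:Appsimple} gives completeness in (b). Your Step 3 argument for completeness never explains why a simple $S$ avoids the degree $>n$ part of the ambient $K$-filtered object, and it again leans on socles of the $I_\lambda$.

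Only after this is injectivity proved, by dévissage to the simples $H_\gamma$ through $0\to H_\gamma\to K_\gamma\to K_\gamma/H_\gamma\to 0$. This dimension shift needs $\ext^i_{\cA}(-,I_\lambda)=0$ for all $i>0$, not just $i=1$. That is why the paper inducts on the degree $m$ for all higher Exts simultaneously.

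\textbf{Step 1's stabilization is only ``expected''.} Noetherianity gives ACC, not DCC. Finite length of each $K_\lambda/\ker\phi$ is not enough either: think of $2^i\bZ\subset\bZ$. You need a uniform length bound, and (A1), (A2) by themselves do not supply one; the induction hypothesis (d) does. Any $\phi\colon K_\lambda\to C$ extends to the injective hull of $C$ in $\cA_{\le n-1}$, which is a finite sum of the finite-length $J_{\gamma,n-1}$. Hence $\phi$ factors through the single map
\[\iota_\lambda\colon K_\lambda\to\bigoplus_{\gamma\in\cL_{<n}}\Hom(K_\lambda,J_{\gamma,n-1})^*\otimes J_{\gamma,n-1},\]
whose target is finitely generated by (A1) and (A2). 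This gives $H_\lambda=\ker\iota_\lambda$ directly, together with (c) and $H_\lambda\ne 0$.

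\textbf{Minor: (d) and (f) for $|\lambda|<n$.} Your sketch has the right inputs. To make it precise, use that $J_{\lambda,n-1}$ is the largest subobject of $J_{\lambda,n}$ lying in $\cA_{\le n-1}$, since the right adjoint to the inclusion preserves injectives. Then the cokernel $C$ has socle only in degree $n$, with $\Hom(H_\mu,C)\cong\ext^1(H_\mu,J_{\lambda,n-1})$. Hence $C$ embeds in a finite sum of the $I_\mu$ with $\mu\in\cL_n$, which gives (d), and with (A5) also (f).
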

In particular, we obtain:
\begin{corollary}\label{cor:main}
Assume Properties (A1)--(A11) hold. The category $\cA$ is locally artinian with
$\{H_{\lambda}\}_{\lambda \in \cL}$ forming a complete set of pairwise non-isomorphic simples of $\cA$.
\end{corollary}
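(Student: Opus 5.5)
The statement to prove is Corollary~\ref{cor:main}, which I would deduce directly from Theorem~\ref{thm:main}. The plan has three steps: show $\cA = \bigcup_n \cA_{\leq n}$ at the level of finitely generated objects, get local artinianity from parts (a) and (d), and pin down the simples using part (b) together with the fact that each $\cA_{\leq n}$ is a Serre subcategory.

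\emph{Exhaustion.} Let $M$ be finitely generated. By (A10), $M$ embeds into a $K$-filtered object $C$, which is $K$-filtered of degree $\leq n$ for some $n$. Then $C \in \cA_{\leq n}$, being an iterated extension of the $K_\lambda$ with $|\lambda| \leq n$. Since $\cA_{\leq n}$ is closed under subobjects, $M \in \cA_{\leq n}$. By Theorem~\ref{thm:main}(a), $M$ has finite length in $\cA_{\leq n}$, and hence in $\cA$, because subobjects of an object of $\cA_{\leq n}$ lie in $\cA_{\leq n}$.

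\emph{Local artinianity.} Every object of $\cA$ is the union of its finitely generated subobjects, since $\cA$ is locally noetherian by (A11). Each of these has finite length, so $\cA$ is generated by finite length objects and is therefore locally artinian.

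\emph{Simples.} Let $S$ be a simple object of $\cA$. It is finitely generated, so $S \in \cA_{\leq n}$ for some $n$, and it is simple in $\cA_{\leq n}$. By Theorem~\ref{thm:main}(b), $S \cong H_\lambda$ for some $\lambda \in \cL_{\leq n}$. Conversely, take any $\lambda$ and set $n = |\lambda|$. Then $H_\lambda$ is simple in $\cA_{\leq n}$, and every subobject in $\cA$ of $H_\lambda$ lies in $\cA_{\leq n}$, so $H_\lambda$ is simple in $\cA$.

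\emph{Distinctness.} Given $\lambda \neq \mu$, let $n = \max(|\lambda|,|\mu|)$. Theorem~\ref{thm:main}(b) says $H_\lambda \not\cong H_\mu$ in $\cA_{\leq n}$. Since $\cA_{\leq n}$ is a full subcategory, they are not isomorphic in $\cA$ either.

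The only point needing care is the passage from ``finitely generated objects have finite length'' to ``locally artinian.'' This is where (A11), through Proposition~\ref{prop:AppA}, is used.
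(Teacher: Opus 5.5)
Your proof is correct and follows the paper's argument: (A10) places every finitely generated object in some $\cA_{\leq n}$, and then (A11) together with parts (a) and (b) of Theorem~\ref{thm:main} gives local artinianity and the classification of simples, with your added checks on finite generation of simples, Serre closure and fullness filling in the routine details. One small correction to your closing remark: the passage from ``finitely generated objects have finite length'' to ``locally artinian'' uses (A11) directly (every object is the union of its finitely generated subobjects), whereas Proposition~\ref{prop:AppA} is used only inside the proof of part (a); also, part (d) is never needed.
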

\begin{proof}
By Property~(A10), every finitely generated object of $\cA$ lies in $\cA_{\leq n}$ for $n$ large enough. The claim thus follows from Property (A11) and parts (a) and (b) of the theorem.
\end{proof}

\subsection{Proof of Theorem~\ref{thm:main}} This subsection is dedicated to proving the above theorem. In particular, we assume Properties (A1)--(A11) hold.

The proof is by induction on $n$, with the $n <0$ case being trivially true. So assume we know that the conclusion of the theorem holds for all integers $< n$. We now prove it for the non-negative integer $n$ using a series of lemmas. 

Since $\cA_{\leq n-1}$ is closed under subobjects in $\cA_{\leq n}$, the objects $H_{\lambda}$ with $\lambda \in \cL_{<n}$ remain simple in $\cA_{\leq n}$. We first provide an alternative characterization of $H_{\lambda}$ for $\lambda \in \cL_n$.

\begin{lemma}\label{lem:notinj-lower}
   Let $\lambda \in \cL_n$. Let $C$ be a finitely generated object in $\cA_{\leq n-1}$ and $\phi \colon K_{\lambda} \to C$. The morphism $\phi$ is not injective.
\end{lemma}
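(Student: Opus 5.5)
We reduce to Property (A8): it suffices to show that every finitely generated object $C$ of $\cA_{\leq n-1}$ is gr-subK of degree $< n = |\lambda|$. Then an injective $\phi\colon K_\lambda \to C$ would contradict (A8).

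To establish this, we use the inductive hypothesis, that is, Theorem~\ref{thm:main} for $n-1$. By part (a), $C$ has finite length in $\cA_{\leq n-1}$. By part (b), its composition factors are among the $H_\mu$ with $\mu \in \cL_{\leq n-1}$. So $C$ admits a finite filtration $0=C^0\subset\cdots\subset C^t=C$ with $C^i/C^{i-1}\cong H_{\mu(i)}$. Each $H_{\mu(i)}$ embeds in its injective envelope $J_{\mu(i),n-1}$ in $\cA_{\leq n-1}$. By part (f) for $n-1$, this envelope is gr-subK of degree $\leq n-1$.

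The graded pieces are therefore subobjects of gr-subK objects rather than of $K$-filtered objects, and we must still check that the definition is met. Fix $i$. The object $J=J_{\mu(i),n-1}$ has a filtration $J^0\subset\cdots\subset J^s$ with $J^j/J^{j-1}\hookrightarrow D^j$, where each $D^j$ is $K$-filtered of degree $\leq n-1$. Intersect with $H=H_{\mu(i)}\subset J$. Since $H$ is simple, exactly one index $j$ has $H\cap J^j\neq 0$ and $H\cap J^{j-1}=0$. For this $j$ we get $H\cap J^j = H$, so $H$ injects into $J^j/J^{j-1}\hookrightarrow D^j$.

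Hence each $C^i/C^{i-1}$ injects into a $K$-filtered object of degree $\leq n-1$. The filtration $\{C^i\}$ thus witnesses that $C$ is gr-subK of degree $<n$. Property (A8) then shows that no morphism $K_\lambda\to C$ is injective, which proves the lemma.

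The base case needs separate treatment. If $n=0$, then $\cA_{\leq -1}=0$, so $C=0$. Since $K_\lambda\neq 0$, the map $K_\lambda\to 0$ is not injective.

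The only real obstacle is that part (f) gives just the gr-subK property for $J$, not $K$-filteredness. The simplicity argument above handles this. One could also use Lemma~\ref{lem:closure}, extended to say that subobjects of gr-subK objects are gr-subK, together with its extension-closure statement, but the direct argument suffices.
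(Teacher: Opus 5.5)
Your proof is correct and follows essentially the paper's route: both use part (f) of the inductive hypothesis to produce a gr-subK target of degree $\leq n-1$ and then invoke (A8). The paper instead embeds $C$ into a finite-length injective $J$ of $\cA_{\leq n-1}$ via parts (a) and (d), notes that $J$ is gr-subK by (f) and Lemma~\ref{lem:closure}, and applies (A8) to the composite $K_\lambda \to C \hookrightarrow J$. You instead show that $C$ itself is gr-subK via a composition series, which avoids part (d) at the cost of your (valid) argument that a simple subobject of a gr-subK object injects into one of the $D^j$.
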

\begin{proof}
    By part (a) for $\cA_{\leq n-1}$, the object $C$ is finite length, so it injects into a finite length injective $J$ of $\cA_{\leq n-1}$ by part (d) for $\cA_{\leq n-1}$. By part (f) for $\cA_{\le n-1}$, each indecomposable injective summand of $J$ is gr-subK of degree $\leq n-1$, so the object $J$ is gr-subK of degree $\leq n-1 < |\lambda|$ by Lemma~\ref{lem:closure}. If $\phi$ were injective, the composite $K_{\lambda} \to C \hookrightarrow J$ would be an injective morphism into a gr-subK object of degree $< |\lambda|$, contradicting Property~(A8).
\end{proof}

Assume $\gamma \in \cL_{< n}$ and $\lambda \in \cL_n$. By Property (A2) and part (d) for $\cA_{\leq n-1}$, the vector space $\Hom_{\cA}(K_{\lambda}, J_{\gamma, n-1})$ is finite-dimensional, so we obtain a map $K_{\lambda} \to \Hom_{\cA}(K_{\lambda}, J_{\gamma, n-1})^* \otimes J_{\gamma, n-1}$ and in turn, by Property (A1), a map 
   \[ \iota_{\lambda} \colon K_{\lambda} \to E \coloneqq \bigoplus_{\gamma \in \cL_{<n}} \Hom_{\cA}(K_{\lambda}, J_{\gamma, n-1})^* \otimes J_{\gamma, n-1}.\]
\begin{lemma}\label{lem:HLambda}
 We have $H_{\lambda} = \ker(\iota_{\lambda})$. 
\end{lemma}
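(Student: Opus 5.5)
We prove the two inclusions $H_{\lambda} \subset \ker(\iota_{\lambda})$ and $\ker(\iota_{\lambda}) \subset H_{\lambda}$ separately, using the induction hypothesis for $\cA_{\leq n-1}$.

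For $H_{\lambda} \subset \ker(\iota_{\lambda})$, it suffices to show that $E$ is a finitely generated object of $\cA_{\leq n-1}$. Then $\iota_{\lambda}$ is one of the morphisms in Definition~\ref{defn:HLambda}, so its kernel contains $H_{\lambda}$. Each $J_{\gamma,n-1}$ lies in $\cA_{\leq n-1}$ and has finite length by part (d) for $\cA_{\leq n-1}$. By part (b) for $\cA_{\leq n-1}$, the simples of $\cA_{\leq n-1}$ are the $H_{\mu}$. These are subobjects of the finitely generated $K_{\mu}$, hence finitely generated by (A11). A finite-length object is an iterated extension of its composition factors, so each $J_{\gamma,n-1}$ is finitely generated. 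The multiplicity spaces are finite-dimensional by (A2), and $\cL_{<n}$ is finite by (A1). Hence $E$ is a finite direct sum of finitely generated objects, and it lies in the localizing subcategory $\cA_{\leq n-1}$.

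For $\ker(\iota_{\lambda}) \subset H_{\lambda}$, let $\phi \colon K_{\lambda} \to C$ be any morphism with $C$ finitely generated in $\cA_{\leq n-1}$; we must show $\ker(\iota_{\lambda}) \subset \ker\phi$.
\begin{enumerate}
\item By part (a) for $\cA_{\leq n-1}$, $C$ has finite length. By part (d), its injective envelope $J$ in $\cA_{\leq n-1}$ has finite length.
\item Since $J$ has finite length, it decomposes as a finite direct sum of indecomposable injectives of $\cA_{\leq n-1}$. Each of these is the envelope of a simple, so $J \cong \bigoplus_j J_{\gamma_j, n-1}$ with $\gamma_j \in \cL_{<n}$ by part (b).
\item Since $C \hookrightarrow J$ is injective, $\ker\phi$ equals the kernel of the composite $K_{\lambda} \to J$. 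That kernel is the intersection of the kernels of the component maps $\psi_j \colon K_{\lambda} \to J_{\gamma_j,n-1}$.
\item Each $\psi_j$ factors through the canonical map $K_{\lambda} \to \Hom_{\cA}(K_{\lambda}, J_{\gamma_j, n-1})^* \otimes J_{\gamma_j, n-1}$, followed by the map induced by evaluation at $\psi_j$. Therefore $\ker(\iota_{\lambda}) \subset \ker\psi_j$ for every $j$.
\end{enumerate}
Combining steps (c) and (d) gives $\ker(\iota_\lambda) \subset \ker\phi$, and intersecting over all $\phi$ gives $\ker(\iota_{\lambda}) \subset H_{\lambda}$.

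The main obstacle is step (b), the decomposition of a finite-length injective $J$ of $\cA_{\leq n-1}$ into copies of the $J_{\gamma,n-1}$. The approach is to take the socle $\bigoplus H_{\gamma_j}$ of $J$, which is a finite direct sum since $J$ has finite length and is essential in $J$. Then compare $J$ with $\bigoplus J_{\gamma_j,n-1}$: the latter is an injective envelope of the socle in the Grothendieck category $\cA_{\leq n-1}$, and envelopes are unique up to isomorphism. A secondary point is confirming that $E$ is finitely generated. This uses that finite length objects of $\cA_{\leq n-1}$ are finitely generated, which relies on the $H_\mu$ being finitely generated via (A11).
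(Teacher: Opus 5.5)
Your proposal is correct and follows the paper's proof. For one inclusion you show $E$ is a finitely generated object of $\cA_{\leq n-1}$. For the other you embed $C$ into a finite direct sum of the $J_{\gamma,n-1}$ and factor each component through the canonical map $K_\lambda \to \Hom_{\cA}(K_\lambda, J_{\gamma,n-1})^* \otimes J_{\gamma,n-1}$, spelling this out in more detail than the paper does.

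One small fix concerns the finite length of the envelope $J$ of $C$, which step (b) then relies on. It is not literally part (d), since that part concerns envelopes of simples. Instead, note that $\soc J = \soc C$ is a finite direct sum of $H_{\gamma_j}$'s and is essential in $J$, because $C$ has finite length and $C \subset J$ is essential. Hence $J \cong \bigoplus_j J_{\gamma_j,n-1}$, and finite length follows afterwards.
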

\begin{proof}
Since $E$ is a finitely generated object in $\cA_{\leq n-1}$, we have $H_{\lambda} \subset \ker (\iota_{\lambda})$. 

Given a map $\phi \colon K_{\lambda} \to C$ with $C$ finitely generated in $\cA_{\leq n-1}$, we may choose an injective envelope of $C$ in $\cA_{\leq n-1}$ which is a finite direct sum of copies of $J_{\gamma, n-1}$ with $\gamma \in \cL_{< n}$ by part (a), (b) and (d) for $\cA_{\leq n-1}$. Thus, the map $\phi$ extends to a map $K_{\lambda} \to E$ so $\ker \phi \supset \ker (\iota_{\lambda})$. Since $\phi$ was arbitrary and $H_{\lambda}$ is the intersection of kernels of such maps, we get that $\ker (\iota_{\lambda}) \subset H_{\lambda}$. 
\end{proof}

Since $H_{\lambda} = \ker (\iota_{\lambda})$ and $E$ is finitely generated, we see that $H_{\lambda}$ is nonzero by Lemma~\ref{lem:notinj-lower}.

\begin{lemma}\label{lem:HLambdasimple}
    For $\lambda \in \cL_n$, the object $H_{\lambda}$ is simple.
\end{lemma}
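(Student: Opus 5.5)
The plan is to show directly that every nonzero subobject $M \subset H_{\lambda}$ equals $H_{\lambda}$; since $H_{\lambda} \neq 0$ was established just above, this gives simplicity. The key is to show that the quotient $K_{\lambda}/M$ is a finitely generated object of $\cA_{\leq n-1}$. Once this is known, the quotient map $K_{\lambda} \to K_{\lambda}/M$ is one of the morphisms in Definition~\ref{defn:HLambda}. Hence $H_{\lambda} \subset \ker(K_\lambda \to K_\lambda/M) = M$, and so $M = H_{\lambda}$.

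To show $K_{\lambda}/M \in \cA_{\leq n-1}$, I would proceed as follows.

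\emph{Embedding.} The object $K_{\lambda}/M$ is finitely generated, being a quotient of the finitely generated $K_{\lambda}$. By Property~(A10) it embeds into a $K$-filtered object $C$.

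\emph{Degree filtration.} By Lemma~\ref{lem:filtration1}, $C$ carries a filtration $0 = C^{\leq -1} \subset C^{\leq 0} \subset \cdots \subset C^{\leq t} = C$ whose graded pieces $C^{\leq i}/C^{\leq i-1}$ are $K$-filtered of degree $i$.

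\emph{Pieces of degree $> n$.} Let $\psi \colon K_{\lambda} \to C$ be the composite $K_{\lambda} \to K_{\lambda}/M \hookrightarrow C$; its kernel is exactly $M$. By Property~(A3), $\Hom_{\cA}(K_{\lambda}, K_{\mu}) = 0$ whenever $|\mu| > n$. By d\'evissage, $\Hom_{\cA}(K_\lambda, D) = 0$ for any $D$ that is $K$-filtered of degree $i > n$. Descending induction on $i$ then shows that $\psi$ factors through $C^{\leq n}$.

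\emph{The degree-$n$ piece.} Next consider the composite $K_{\lambda} \to C^{\leq n} \to C^{\leq n}/C^{\leq n-1}$. Its kernel contains $\ker\psi = M \neq 0$, so it is not injective. The target is $K$-filtered of degree $n = |\lambda|$, so Lemma~\ref{lem:auxiliary}(a) forces this composite to be zero. Thus $\psi$ factors through $C^{\leq n-1}$.

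\emph{Conclusion.} Therefore $K_{\lambda}/M \cong \im \psi$ is a subobject of $C^{\leq n-1}$. The object $C^{\leq n-1}$ is $K$-filtered of degree $\leq n-1$ and hence lies in $\cA_{\leq n-1}$. Since $\cA_{\leq n-1}$ is a localizing subcategory and hence closed under subobjects, $K_{\lambda}/M \in \cA_{\leq n-1}$, and it is finitely generated as noted. This completes the argument.

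I expect no serious obstacle; the main thing to check carefully is the step through the degree-$n$ layer. There one must note that non-injectivity of $\psi$ passes to the composite with the projection, so that Lemma~\ref{lem:auxiliary}(a) applies and the degree-$n$ contribution vanishes entirely.
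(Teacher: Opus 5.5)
Your proposal is correct and follows essentially the same route as the paper: embed $K_{\lambda}/M$ into a $K$-filtered object via (A10), sort its filtration by degree with Lemma~\ref{lem:filtration1}, kill the degree $>n$ part by (A3), kill the degree-$n$ part by Lemma~\ref{lem:auxiliary}(a), and conclude $H_{\lambda} \subset M$. The only cosmetic difference is the test morphism: you use the quotient map $K_{\lambda} \to K_{\lambda}/M$, placing $K_{\lambda}/M$ in $\cA_{\leq n-1}$ by closure under subobjects, whereas the paper uses the composite into the finitely generated $K$-filtered object $C_1$ of degree $\leq n-1$.
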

\begin{proof}
    Assume $N \subset H_{\lambda}$ is a nonzero subobject. By Property~(A10), the finitely generated object $K_{\lambda}/N$ admits an injection $\phi \colon K_{\lambda}/N \hookrightarrow M$ where $M$ is $K$-filtered. By Lemma~\ref{lem:filtration1}, we may assume there are subobjects $C_1 \subset C_2 \subset M$ such that $C_1$ is $K$-filtered of degree $\leq n-1$, the object $C_2/C_1$ is $K$-filtered of degree $n$, and $M/C_2$ is filtered by $K_{\gamma}$ with $\gamma \in \cL \setminus \cL_{\leq n}$. We first show that $\im \phi \subset C_1$.

    By Property~(A3), we have $\Hom_{\cA}(K_{\lambda}, K_{\gamma}) = 0$ for $\gamma \in \cL \setminus \cL_{\leq n}$. So the composition $K_{\lambda}/N \to M \to M/C_2$ is the zero map hence the morphism $\phi$ factors as $\overline{\phi} \colon K_{\lambda}/N \hookrightarrow C_2$. Let $\psi \colon K_{\lambda} \to C_2/C_1$ be the composite of $K_{\lambda} \to K_{\lambda}/N \xrightarrow{\overline{\phi}} C_2$ with the projection $C_2 \to C_2/C_1$; its kernel contains $N \neq 0$ and the object $C_2/C_1$ is $K$-filtered of degree $n$, so $\psi = 0$ by Lemma~\ref{lem:auxiliary}(a), proving the claim.

    Thus the composite $K_{\lambda} \to K_{\lambda}/N \hookrightarrow C_1$ is a map from $K_{\lambda}$ to a finitely generated object in $\cA_{\leq n-1}$ so $H_{\lambda}$ is contained in its kernel, which is precisely $N$, whence $H_{\lambda}$ is simple.
\end{proof}

\begin{lemma}\label{lem:KLHLsimples}
  For $\lambda \in \cL_n$, the quotient $K_{\lambda}/H_{\lambda}$ is finite length and the simple objects occurring with nonzero multiplicity in $K_{\lambda}/H_{\lambda}$ are of the form $H_{\gamma}$ with $\gamma \in \cL_{< n}$. 
\end{lemma}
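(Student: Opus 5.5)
We use the description $H_{\lambda} = \ker(\iota_{\lambda})$ from Lemma~\ref{lem:HLambda}. It gives an injection
\[ K_{\lambda}/H_{\lambda} \hookrightarrow E = \bigoplus_{\gamma \in \cL_{<n}} \Hom_{\cA}(K_{\lambda}, J_{\gamma, n-1})^* \otimes J_{\gamma, n-1}. \]
By Property~(A1), the index set $\cL_{<n}$ is finite. By Property~(A2), each multiplicity space $\Hom_{\cA}(K_{\lambda}, J_{\gamma,n-1})$ is finite-dimensional; this is legitimate because $J_{\gamma,n-1}$ is finitely generated, being of finite length by part (d) for $\cA_{\leq n-1}$. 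Hence $E$ is a finite direct sum of the objects $J_{\gamma, n-1}$. Each of these has finite length in $\cA_{\leq n-1}$ by the inductive hypothesis (d), so $E$ has finite length.

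The next step is to transfer finiteness of length to $K_{\lambda}/H_{\lambda}$. Since $\cA_{\leq n-1}$ is a localizing, hence Serre, subcategory of $\cA$, the subobject $K_{\lambda}/H_{\lambda}$ of $E$ lies in $\cA_{\leq n-1}$. Any subobject of a finite-length object has finite length, so $K_{\lambda}/H_{\lambda}$ has finite length as an object of $\cA_{\leq n-1}$. Because $\cA_{\leq n-1}$ is closed under subobjects and quotients in $\cA$, its subobject lattices agree with those computed in $\cA$. Therefore lengths and composition factors are the same whether computed in $\cA_{\leq n-1}$ or in $\cA$ (or in $\cA_{\leq n}$).

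Finally, part (b) for $\cA_{\leq n-1}$ says the simples of $\cA_{\leq n-1}$ are exactly the $H_{\gamma}$ with $\gamma \in \cL_{<n}$. So every composition factor of $K_{\lambda}/H_{\lambda}$ is of this form, which is the claim.

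The only point needing care is that finiteness of length and the identification of simples pass between the Serre subcategory and the ambient category. The Serre property settles this directly, so I expect no real obstacle beyond bookkeeping. Alternatively, one could note that $K_{\lambda}/H_{\lambda}$ is finitely generated, being a quotient of $K_{\lambda}$, and lies in $\cA_{\leq n-1}$, and then apply part (a) for $\cA_{\leq n-1}$ directly. This bypasses the need to know that $E$ has finite length.
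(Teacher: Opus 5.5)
Your proposal is correct and is essentially the paper's argument: the paper also embeds $K_{\lambda}/H_{\lambda}$ into the finite direct sum $E$ of the $J_{\gamma,n-1}$ via Lemma~\ref{lem:HLambda} and concludes from parts (d) and (b) for $\cA_{\leq n-1}$. Your remarks on lengths and composition factors agreeing in the Serre subcategory and in $\cA$, and your alternative route through part (a), only spell out bookkeeping that the paper leaves implicit.
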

\begin{proof}
By Lemma~\ref{lem:HLambda}, we have an exact sequence
\[0 \to H_{\lambda} \to K_{\lambda} \to \bigoplus_{\gamma \in \cL_{<n}} \Hom_{\cA}(K_{\lambda}, J_{\gamma, n-1})^* \otimes J_{\gamma, n-1}.\] 
The result now follows from parts (d) and (b) for $\cA_{\leq n-1}$.
\end{proof}
\begin{corollary}\label{cor:KLfl}
   For $\lambda \in \cL_n$, the object $K_{\lambda}$ is finite length. 
\end{corollary}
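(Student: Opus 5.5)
The corollary follows from the short exact sequence
\[0 \to H_{\lambda} \to K_{\lambda} \to K_{\lambda}/H_{\lambda} \to 0\]
in $\cA$, because the class of finite length objects is closed under extensions. We only need each end term to be of finite length in $\cA$, or equivalently in $\cA_{\leq n}$, since that is a Serre subcategory.

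\textbf{Left term.} By Lemma~\ref{lem:HLambdasimple}, $H_{\lambda}$ is simple in $\cA$. It is nonzero by Lemma~\ref{lem:notinj-lower} together with Lemma~\ref{lem:HLambda}. So it has length one.

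\textbf{Right term.} By Lemma~\ref{lem:KLHLsimples}, $K_{\lambda}/H_{\lambda}$ has finite length, with composition factors $H_{\gamma}$ for $\gamma \in \cL_{<n}$. The point to check is that finite length is meant in $\cA$, not merely in $\cA_{\leq n-1}$. This holds because $\cA_{\leq n-1}$ is a Serre subcategory of $\cA$. Concretely, $K_{\lambda}/H_{\lambda}$ embeds via $\iota_\lambda$ into the finitely generated object $E$ of $\cA_{\leq n-1}$, so $K_{\lambda}/H_{\lambda}$ lies in $\cA_{\leq n-1}$. Every subobject in $\cA$ of an object of $\cA_{\leq n-1}$ again lies in $\cA_{\leq n-1}$, so composition series computed in $\cA_{\leq n-1}$ (parts (a) and (d) for $n-1$) are composition series in $\cA$.

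\textbf{Conclusion.} Since $\ell(K_{\lambda}) = 1 + \ell(K_{\lambda}/H_{\lambda}) < \infty$, the object $K_{\lambda}$ has finite length. There is no real obstacle; the only care needed is the remark that lengths in a Serre subcategory agree with lengths in $\cA$.
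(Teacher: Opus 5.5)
Your proof is correct and is essentially the paper's: $K_{\lambda}$ is an extension of $K_{\lambda}/H_{\lambda}$ by $H_{\lambda}$, where $H_{\lambda}$ is simple by Lemma~\ref{lem:HLambdasimple} and $K_{\lambda}/H_{\lambda}$ has finite length by Lemma~\ref{lem:KLHLsimples}. Your remark that length computed in the Serre subcategory $\cA_{\leq n-1}$ agrees with length in $\cA$ is correct; the paper leaves it implicit.
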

\begin{proof}
    The object $K_{\lambda}$ is an extension of $H_{\lambda}$ and $K_{\lambda}/H_{\lambda}$ both of which are finite length by Lemma~\ref{lem:HLambdasimple} and Lemma~\ref{lem:KLHLsimples}.
\end{proof}

\begin{lemma}\label{lem:simpleclass}
    Every simple object in $\cA_{\leq n}$ is of the form $H_{\gamma}$ for some $\gamma \in \cL_{\leq n}$.
\end{lemma}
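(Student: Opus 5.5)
Let $S$ be a simple object of $\cA_{\leq n}$. The plan is to realize $S$ as a subquotient of some $K_\lambda$ with $\lambda \in \cL_{\leq n}$, and then to use the finite-length statements already proved to identify it with some $H_\gamma$.

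Since $\cA_{\leq n}$ is the localizing subcategory generated by the $K_\lambda$ with $\lambda \in \cL_{\leq n}$, every object of $\cA_{\leq n}$ is a subquotient of a direct sum of such $K_\lambda$. Pick $0 \ne s \colon S \to$ such a subquotient. More concretely, we would argue as follows. Let $\cC \subset \cA_{\leq n}$ be the full subcategory of objects $X$ all of whose finitely generated subobjects have a finite filtration with simple subquotients from $\{H_\gamma\}_{\gamma \in \cL_{\leq n}}$. By Property (A11), a finitely generated subobject of an object of $\cA$ is noetherian, which should make $\cC$ closed under subobjects, quotients, extensions and colimits (arbitrary direct sums, since a finitely generated subobject of a direct sum lies in a finite subsum). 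So $\cC$ is localizing, and it suffices to show $K_\lambda \in \cC$ for all $\lambda \in \cL_{\leq n}$; then $\cC = \cA_{\leq n}$.

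For $|\lambda| < n$ this holds by parts (a), (b) of the theorem for $\cA_{\leq n-1}$. For $\lambda \in \cL_n$, Corollary~\ref{cor:KLfl} shows $K_\lambda$ is of finite length. Its composition factors are $H_\lambda$ (Lemma~\ref{lem:HLambdasimple}) together with factors $H_\gamma$ with $\gamma \in \cL_{<n}$ (Lemma~\ref{lem:KLHLsimples}). Hence $K_\lambda \in \cC$.

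Now $S$ is finitely generated (generated by any nonzero element, being simple), so $S$ lies in $\cC$. Its unique composition factor is therefore some $H_\gamma$, giving $S \cong H_\gamma$. The main obstacle is the closure of $\cC$ under subobjects, quotients and extensions of finitely generated pieces. Quotients and subobjects of finite-length objects are fine. For extensions and direct limits, the key point is that a finitely generated subobject of an extension $0\to X'\to X\to X''\to 0$ meets $X'$ in a finitely generated subobject, by local noetherianity (A11). This is exactly where (A11) is needed.
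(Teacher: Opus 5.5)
Your proof is correct, but it takes a different route from the paper. The paper first observes that each $K_\lambda$ with $\lambda\in\cL_{\le n}$ is a finite iterated extension of the simples $H_\gamma$, $\gamma\in\cL_{\le n}$. This uses Lemmas~\ref{lem:HLambdasimple} and \ref{lem:KLHLsimples}, plus induction for $|\lambda|<n$. Hence $\cA_{\le n}$ is also the localizing subcategory generated by these $H_\gamma$.

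The paper then invokes Lemma~\ref{lem:Appsimple}: a simple object of the localizing subcategory generated by a set $X$ of simples is isomorphic to a member of $X$. That lemma is proved with injective envelopes (some $H\in X$ must map nontrivially into $E(L)$, whose socle is $L$). It holds in any Grothendieck abelian category, so this step of the paper needs no noetherian hypothesis.

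You instead rerun the argument of Lemma~\ref{lem:locgen} and Proposition~\ref{prop:AppA} with $X=\{H_\gamma\}$ in place of $\{K_\gamma\}$. This needs (A11), which is available here. In exchange, your identity $\cC=\cA_{\le n}$ says that every finitely generated object of $\cA_{\le n}$ has finite length with all composition factors among the $H_\gamma$. So you get Lemma~\ref{lem:artinian} at the same time, and the separate application of Proposition~\ref{prop:AppA} to $\{K_\gamma\}$ becomes unnecessary.

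Three small repairs:
\begin{itemize}
\item Delete your opening claim that every object of $\cA_{\le n}$ is a subquotient of a direct sum of the $K_\lambda$; it is false in general, since localizing subcategories are closed under extensions. For example, over $k[x]/(x^2)$ the localizing subcategory generated by $k$ contains $k[x]/(x^2)$, which is not a subquotient of a semisimple module. You never use the claim, so it can simply go.
\item For closure of $\cC$ under quotients, the step you gloss over is that a finitely generated subobject of $\pi(X)$ lies in $\pi(M_0)$ for some finitely generated $M_0\subset X$. This is Lemma~\ref{lem:fg}, applied to $X$ as the directed union of its finitely generated subobjects.
\item Instead of ``generated by any nonzero element'', just note that a simple object is trivially finitely generated.
\end{itemize}
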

\begin{proof}
This is Lemma~\ref{lem:Appsimple} combined with Lemma~\ref{lem:KLHLsimples} and Lemma~\ref{lem:HLambdasimple}.
\end{proof}

\begin{lemma}\label{lem:artinian}
   Every finitely generated object in $\cA_{\leq n}$ has finite length.
\end{lemma}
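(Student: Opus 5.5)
The plan is to show that $\cA_{\leq n}$ is generated, as a localizing subcategory, by finitely many finite length objects, and then to pass from ``locally artinian'' to ``finitely generated implies finite length'' using the noetherian hypothesis (A11).

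\textbf{Step 1: the generators have finite length.} By definition $\cA_{\leq n}$ is the localizing subcategory generated by $K_\lambda$ for $\lambda \in \cL_{\leq n}$, and there are finitely many such $\lambda$ by (A1). For $\lambda \in \cL_n$, the object $K_\lambda$ has finite length by Corollary~\ref{cor:KLfl}. For $\lambda \in \cL_{<n}$, the object $K_\lambda$ is finitely generated and lies in $\cA_{\leq n-1}$, so it has finite length by part (a) of the inductive hypothesis. Finite length persists in $\cA_{\leq n}$, because subobjects in the Serre subcategory $\cA_{\leq n-1}$ agree with subobjects in $\cA$. Therefore $\cA_{\leq n}$ is the localizing subcategory generated by the finite length, hence artinian, object $G = \bigoplus_{\lambda \in \cL_{\leq n}} K_\lambda$.

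\textbf{Step 2: locally artinian.} Since $\cA$ is locally noetherian by (A11), Proposition~\ref{prop:AppA} applies: the localizing subcategory generated by finitely many artinian objects is locally artinian. Hence $\cA_{\leq n}$ is locally artinian. If the appendix result instead asserts that every object of this localizing subcategory is a union of subquotients of finite direct sums of copies of $G$, I would argue directly. A finitely generated object $M \in \cA_{\leq n}$ is then a quotient of a subobject of $G^{\oplus m}$, using that $M$ is noetherian by (A11), so that the ascending union stabilizes. Since $G^{\oplus m}$ has finite length, so does $M$.

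\textbf{Step 3: from locally artinian to finite length.} Let $M$ be finitely generated in $\cA_{\leq n}$. By (A11), $M$ is noetherian in $\cA$, and hence in $\cA_{\leq n}$, since subobjects coincide. Local artinianity writes $M$ as a directed union of artinian (equivalently finite length) subobjects. Noetherianity forces this union to stabilize, so $M$ is itself artinian. An object that is both noetherian and artinian has finite length.

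\textbf{Main obstacle.} The delicate point is Step 2, namely that a localizing subcategory generated by artinian objects is locally artinian. In general this fails: a Serre closure under colimits might produce non-artinian finitely generated objects through extensions. This is exactly why (A11) is needed, and I would rely on Proposition~\ref{prop:AppA} for it. Concretely, the proof should show that the full subcategory of objects that are unions of finite length subobjects is closed under subobjects, quotients, coproducts and extensions. The only nontrivial case is extensions, and it is handled by noetherianity: given a finitely generated subobject of an extension, one reduces to an extension of finite length objects.
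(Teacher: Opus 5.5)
Your proof is correct and is essentially the paper's: the paper combines Proposition~\ref{prop:AppA} with (A11) to see that every finitely generated object of $\cA_{\leq n}$ has a finite filtration by subquotients of the finite length objects $K_\gamma$, $\gamma \in \cL_{\leq n}$ (finite length by Corollary~\ref{cor:KLfl} and induction), and your closure-under-extensions argument in the last paragraph reproves exactly this, with ``finite length'' in place of ``$X$-subfiltered''. One caution: the fallback in your Step~2 rests on a false description, because the Serre closure requires iterated extensions, so a finitely generated object need not be a subquotient of $G^{\oplus m}$ (e.g.\ $k[x]/(x^2)$ is filtered by $k = k[x]/(x)$ but is not a subquotient of any $k^{\oplus m}$); your main line does not depend on it.
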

\begin{proof}
    By Proposition~\ref{prop:AppA} and Property (A11), every finitely generated object in $\cA_{\leq n}$ is in the Serre subcategory generated by the objects $K_{\gamma}$ with $\gamma$ varying over $\cL_{\leq n}$. The generating objects are finite length by Corollary~\ref{cor:KLfl} (and induction), whence the Serre subcategory they generate is artinian.
\end{proof}

Lemma~\ref{lem:artinian} and Lemma~\ref{lem:KLHLsimples} give parts (a) and (c) for $\cA_{\leq n}$ respectively. We emphasize however that Lemma~\ref{lem:simpleclass} only proves one ``containment'' required for part (b) -- we are yet to prove $H_{\lambda} \not\cong H_{\gamma}$ for distinct $\lambda, \gamma \in \cL_{\leq n}$. 

\begin{lemma}\label{lem:Jlaminjenv}
   For $\lambda \in \cL_n$, the object $I_{\lambda}$ is an indecomposable injective in $\cA_{\leq n}$ and the injective envelope of $H_{\lambda}$.
\end{lemma}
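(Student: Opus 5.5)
We must show three things: $I_\lambda$ is injective in $\cA_{\leq n}$, $H_\lambda \subset I_\lambda$ is essential, and $I_\lambda$ is indecomposable. Indecomposability is immediate from Property (A6), since $\End_{\cA}(I_\lambda)$ is local and hence has no nontrivial idempotents. Also $I_\lambda \in \cA_{\leq n}$ by (A5), as noted after the list of properties.

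\emph{Injectivity.} By Lemma~\ref{lem:artinian}, $\cA_{\leq n}$ is locally of finite length. Its simples are the $H_\gamma$, $\gamma \in \cL_{\leq n}$, by Lemma~\ref{lem:simpleclass}. So by a standard Baer-type criterion for locally finite Grothendieck categories (presumably in Appendix~\ref{s:appx}), it suffices to prove $\ext^1_{\cA_{\leq n}}(H_\gamma, I_\lambda) = 0$ for every $\gamma \in \cL_{\leq n}$. Equivalently, we need the vanishing of $\ext^1$ for every finite-length object. Since $\cA_{\leq n}$ is closed under extensions in $\cA$, $\ext^1$ computed in $\cA_{\leq n}$ agrees with $\ext^1_{\cA}$.

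\begin{itemize}
\item For $|\gamma| < n$: Lemma~\ref{lem:auxiliary}(b) gives $\ext^i_{\cA}(K_\gamma, I_\lambda) = 0$ for all $i$. By Lemma~\ref{lem:artinian} and part (b) in lower degrees, every finite-length object of $\cA_{\leq n-1}$ is an iterated extension of the $H_\gamma$. I would rather argue that $\ext^i_{\cA}(X, I_\lambda) = 0$ for all $X \in \cA_{\leq n-1}$ of finite length. This should follow by induction on $|\gamma|$ and length, using the short exact sequences $0 \to H_\gamma \to K_\gamma \to K_\gamma/H_\gamma \to 0$ and part (c) for lower degrees. The long exact sequence gives $\ext^1(H_\gamma, I_\lambda) \cong \ext^2(K_\gamma/H_\gamma, I_\lambda)$, so higher Ext's must be carried along in the induction. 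That is why the claim is stated for all $i$, and Lemma~\ref{lem:auxiliary}(b) supplies exactly this.
\item For $|\gamma| = n$: use $0 \to H_\gamma \to K_\gamma \to K_\gamma/H_\gamma \to 0$. The quotient $K_\gamma/H_\gamma$ lies in $\cA_{\leq n-1}$ with finite length, so all its Ext groups into $I_\lambda$ vanish by the previous case. Hence $\ext^1(H_\gamma, I_\lambda) \cong \ext^1(K_\gamma, I_\lambda)$, which is $0$ by (A4).
\end{itemize}

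The main obstacle is the first case. There I must make sure the Ext-vanishing for all degrees $i$ passes cleanly to finite-length objects of $\cA_{\leq n-1}$. This needs Ext computed in $\cA$, not in a localizing subcategory, so I would work throughout with $\ext_{\cA}$ and only use $\ext^1$ agreement at the end. The induction on length and degree works by dévissage from the statement for all $K_\gamma$ with $|\gamma| < n$.

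\emph{Essentiality.} It suffices to show that $I_\lambda$ has simple socle $H_\lambda$. Let $S = H_\gamma \subset I_\lambda$ be a simple subobject.
\begin{itemize}
\item If $|\gamma| < n$, then $\Hom(H_\gamma, I_\lambda) = 0$ by the vanishing just established.
\item If $|\gamma| = n$, then $\Hom(K_\gamma/H_\gamma, I_\lambda) = 0$ and $\ext^1(K_\gamma/H_\gamma, I_\lambda) = 0$. So restriction gives an isomorphism $\Hom(K_\gamma, I_\lambda) \cong \Hom(H_\gamma, I_\lambda)$. A nonzero map $H_\gamma \to I_\lambda$ therefore extends to a nonzero map $K_\gamma \to I_\lambda$, forcing $\gamma = \lambda$ by (A9). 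The map is then injective by (A7).
\end{itemize}
Thus every simple subobject of $I_\lambda$ is the image of $H_\lambda$ under an injective map $K_\lambda \to I_\lambda$. To see that the socle is exactly one copy of $H_\lambda$, I would use injectivity of $I_\lambda$ together with (A6). If the socle contained $H_\lambda^{\oplus 2}$, then $I_\lambda$ would contain the injective envelope of each summand as a direct summand, contradicting indecomposability. Since $I_\lambda$ has finite length (it is $K$-filtered, hence finite length by Corollary~\ref{cor:KLfl}), its socle is nonzero. Therefore $I_\lambda$ is the injective envelope of $H_\lambda$.
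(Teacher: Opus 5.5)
Your proof is correct and takes essentially the same route as the paper. Baer's criterion (Proposition~\ref{prop:baer}) reduces injectivity to $\ext^1_{\cA}(H_\gamma, I_\lambda)=0$, which is proved by induction on $|\gamma|$ using $0 \to H_\gamma \to K_\gamma \to K_\gamma/H_\gamma \to 0$ together with (A4), and (A6) gives indecomposability. The differences are only cosmetic: the paper tracks $\ext^i$ only for $i>0$ and applies (A4) uniformly for all $|\gamma| \leq n$ instead of invoking Lemma~\ref{lem:auxiliary}(b), and it skips your socle computation via (A9) and (A7), since an indecomposable injective is automatically the injective envelope of any nonzero subobject, such as $H_\lambda \subseteq K_\lambda \subset I_\lambda$.
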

\begin{proof}
  It suffices to prove that $\ext^1_{\cA_{\leq n}}(M, I_{\lambda}) = 0$ for all \emph{finitely generated} $M$ in $\cA_{\leq n}$ by Property (A11) and Proposition~\ref{prop:baer}. 
    
    We first show that $\ext^i_{\cA}(M, I_{\lambda}) = 0$ for all finitely generated $M$ in $\cA_{\leq n}$ and all $i > 0$. 
    We proceed by induction on $m$ to show that for all finitely generated objects $M$ in $\cA_{\leq m}$, we have $\ext^i_{\cA}(M, I_{\lambda}) = 0$ for all $i > 0$. The base case of $m < 0$ is trivially true. So assume we have $\ext^i_{\cA}(M, I_{\lambda}) = 0$ for all finitely generated objects $M$ in $\cA_{\leq m}$ for some $m < n$.
    
    Assume $m+1 < n$ (resp.~$m+1 = n$), then by parts (a) and (b) for $\cA_{\leq m+1}$ (resp.~part (a) for $\cA_{\leq n}$ and Lemma~\ref{lem:simpleclass}), it suffices to prove that $\ext^i_{\cA}(H_{\gamma}, I_{\lambda}) = 0$ for $\gamma \in \cL_{\leq m+1}$. When $|\gamma| \leq m$ this vanishing holds by induction, so we may assume $|\gamma| = {m+1}$. By part (c) for $\cA_{\leq m+1}$, we have the short exact sequence
    \[ 0 \to H_{\gamma} \to K_{\gamma} \to N \to 0 \]
    with $N$ an object of $\cA_{\leq m}$ so by induction, we have $\ext^i_{\cA}(N, I_{\lambda}) = 0$ for all $i > 0$. From the associated long exact sequence
    \[\begin{tikzcd}
0 & {\Hom_{\cA}(N, I_{\lambda})} & {\Hom_{\cA}(K_{\gamma}, I_{\lambda})} & {\Hom_{\cA}(H_{\gamma}, I_{\lambda})} \\
& {\ext_{\cA}^1(N, I_{\lambda})} & {\ext_{\cA}^1(K_{\gamma}, I_{\lambda})} & {\ext_{\cA}^1(H_{\gamma}, I_{\lambda})} \\
& {\ext_{\cA}^2(N, I_{\lambda})} & {\ext_{\cA}^2(K_{\gamma}, I_{\lambda})} & {\ext_{\cA}^2(H_{\gamma}, I_{\lambda})} \ldots,
\arrow[from=1-1, to=1-2]
\arrow[from=1-2, to=1-3]
\arrow[from=1-3, to=1-4]
\arrow[from=2-2, to=2-3]
\arrow[from=2-3, to=2-4]
\arrow[from=3-2, to=3-3]
\arrow[from=3-3, to=3-4]
\arrow[from=1-4, to=2-2, rounded corners,
  to path={ -- ([xshift=2ex]\tikztostart.east)
            |- ([xshift=-2ex, yshift=3mm]\tikztotarget.north west) \tikztonodes
            |- (\tikztotarget.west) }]
\arrow[from=2-4, to=3-2,  rounded corners,
  to path={ -- ([xshift=2ex]\tikztostart.east)
            |- ([xshift=-2ex, yshift=3mm]\tikztotarget.north west) \tikztonodes
            |- (\tikztotarget.west) }]
\end{tikzcd}\]
    we get $\ext_{\cA}^i(H_{\gamma}, I_{\lambda}) \cong \ext_{\cA}^i(K_{\gamma}, I_{\lambda})$ for all $i > 0$. But $\ext^i_{\cA}(K_{\gamma}, I_{\lambda}) = 0$ for all $i > 0$ by Property~(A4). We have thus proved our claim that $\ext^i_{\cA}(M, I_{\lambda}) = 0$ for $i > 0$ and finitely generated $M \in \cA_{\leq n}$.  In particular $\ext^1_{\cA_{\leq n}}(M, I_{\lambda}) \cong \ext^1_{\cA}(M, I_{\lambda}) = 0$ for all finitely generated $M$ in $\cA_{\leq n}$ by Lemma~\ref{lem:obvious}, so $I_{\lambda}$ is injective in $\cA_{\leq n}$.

    By Property~(A6), the algebra $\End_{\cA}(I_{\lambda})$ has no non-trivial idempotents, so $I_{\lambda}$ is indecomposable as well. Finally, the inclusions $H_{\lambda} \subseteq K_{\lambda} \subset I_{\lambda}$ give an injection $H_{\lambda} \hookrightarrow I_{\lambda}$; as $I_{\lambda}$ is an indecomposable injective, it is the injective envelope of $H_{\lambda}$, and $H_{\lambda} = \soc(I_{\lambda})$.
\end{proof}

So for $\lambda \in \cL_n$ we have $J_{\lambda, n} = I_{\lambda}$, thus proving part (e) for $\cA_{\leq n}$. This readily gives part (b) for $\cA_{\leq n}$ as well:
\begin{lemma}\label{lem:distinct}
    Assume for $\lambda, \gamma \in \cL_{\leq n}$, we have $H_{\lambda} \cong H_{\gamma}$. Then $\lambda = \gamma$.
\end{lemma}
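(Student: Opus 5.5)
Suppose $H_{\lambda} \cong H_{\gamma}$ with $\lambda, \gamma \in \cL_{\leq n}$. We split into cases according to $|\lambda|$ and $|\gamma|$, and we may assume $|\gamma| \leq |\lambda|$.

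\emph{Case $|\lambda| = |\gamma| < n$.} Then $\lambda, \gamma \in \cL_{\leq n-1}$, and part (b) for $\cA_{\leq n-1}$ gives $\lambda = \gamma$ directly.

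\emph{Case $|\gamma| < |\lambda|$.} Put $m = |\lambda| \leq n$. By part (c), $H_{\gamma}$ is a subquotient of $K_{\gamma}$, so $H_{\gamma}$ lies in $\cA_{\leq |\gamma|} \subset \cA_{\leq m-1}$. It is finitely generated, being simple and nonzero. Fix an isomorphism $\theta \colon H_{\lambda} \to H_{\gamma}$. The plan is to extend the composite $H_\lambda \xrightarrow{\theta} H_\gamma \hookrightarrow J_{\gamma,m-1}$ to a map $K_\lambda \to J_{\gamma,m-1}$, using that $J_{\gamma, m-1}$ is injective in $\cA_{\leq m-1}$. This is the one subtle point, since $K_{\lambda}$ need not lie in $\cA_{\leq m-1}$, so injectivity in $\cA_{\le m-1}$ does not apply to it directly. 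We can avoid the extension entirely. Instead, use Lemma~\ref{lem:notinj-lower} (or its analogue at level $m$, available by induction for $m<n$) applied to the inclusion of the nonzero simple object itself: by definition, $H_\lambda$ is contained in the kernel of every map from $K_\lambda$ to a finitely generated object of $\cA_{\leq m-1}$. Take the characterization $H_\lambda=\ker(\iota_\lambda)$ and compose. The cleanest route is Property (A8). The object $H_{\gamma}$ embeds into $J_{\gamma, |\gamma|}$, which by part (f) (inductively) is gr-subK of degree $\leq |\gamma| < m$. Since $H_{\lambda}$ is a nonzero subobject of $K_{\lambda}$, we would like a map out of $K_{\lambda}$ itself. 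Since $H_\lambda\subset K_\lambda\subset I_\lambda$ and $I_\lambda=J_{\lambda,m}$ is the injective envelope of $H_\lambda$ in $\cA_{\leq m}$ by Lemma~\ref{lem:Jlaminjenv} (at level $m$), while $J_{\gamma,m}$ is the injective envelope of $H_\gamma$ in the same category, the isomorphism $H_\lambda\cong H_\gamma$ forces $I_\lambda\cong J_{\gamma,m}$. Now $J_{\gamma,m}$ is gr-subK of degree $\le m$, and we want degree $<m$. The better move is to compare socles. The object $K_\lambda \subset I_\lambda$ has a nonzero map into $J_{\gamma,m}$, and we would need $\Hom(K_\lambda, J_{\gamma,m})$ to vanish in some form. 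I expect this step to be the main obstacle.

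The simplest resolution uses $\ext^0$ vanishing. By Lemma~\ref{lem:auxiliary}(b), $\Hom_{\cA}(K_{\gamma}, I_{\lambda}) = 0$ since $|\gamma| < |\lambda|$. But $H_{\gamma} \subset K_{\gamma}$, and $H_\gamma\cong H_\lambda = \soc(I_\lambda)\subset I_\lambda$. So the embedding $H_{\gamma} \hookrightarrow I_{\lambda}$ extends to a map $K_{\gamma} \to I_{\lambda}$, because $K_{\gamma} \in \cA_{\leq m}$ and $I_{\lambda}$ is injective in $\cA_{\leq m}$ by Lemma~\ref{lem:Jlaminjenv}. This extension is nonzero, contradicting Lemma~\ref{lem:auxiliary}(b). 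For $m<n$, Lemma~\ref{lem:Jlaminjenv} at level $m$ is available through part (e) of the inductive hypothesis; for $m=n$ it is the lemma just proved.

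\emph{Case $|\lambda| = |\gamma| = n$.} Argue in the same way. The embedding $H_{\gamma} \cong H_{\lambda} \hookrightarrow I_{\lambda}$ extends, by injectivity of $I_{\lambda}$ in $\cA_{\leq n}$, to a map $K_{\gamma} \to I_{\lambda}$. This map is nonzero, since it is nonzero on $H_\gamma$. By Property (A9), every morphism $K_{\gamma} \to I_{\lambda}$ with $|\gamma|=|\lambda|$ and $\gamma\neq\lambda$ is zero. Hence $\gamma = \lambda$.
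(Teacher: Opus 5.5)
Your final argument is correct and is the paper's proof. You extend $H_\gamma \cong H_\lambda \hookrightarrow I_\lambda$ along $H_\gamma \subset K_\gamma$ using injectivity of $I_\lambda$ in $\cA_{\leq |\lambda|}$, then apply Lemma~\ref{lem:auxiliary}(b) with $i=0$, or Property~(A9). The paper first uses part (b) for $\cA_{\leq n-1}$ to reduce to $|\lambda| = n$, which also covers your subcase $|\gamma| < |\lambda| < n$; you should also delete the abandoned exploratory attempts in your second case.
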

\begin{proof}
By part (b) for $\cA_{\leq n-1}$, we may assume $|\lambda| = n$ so $|\gamma| \leq |\lambda|$. 
Since $J_{\lambda,n}$ is the injective envelope of $H_{\lambda} \cong H_{\gamma}$ in $\cA_{\leq n}$, the containment $H_{\gamma} \hookrightarrow K_{\gamma}$ extends to a nonzero morphism $K_{\gamma} \to J_{\lambda, n} = I_{\lambda}$. The $i=0$ case of Lemma~\ref{lem:auxiliary}(b) now forces $|\gamma| = |\lambda|$ and in turn, Property (A9) forces $\gamma = \lambda$, as required.
\end{proof}
So what remains to be proved are parts (d) and (f).

\begin{lemma}\label{lem:Ilower-finite}
For all $\gamma \in \cL_{\leq n}$, the injective envelope $J_{\gamma, n}$ of $H_{\gamma}$ in $\cA_{\leq n}$ is finite length.
\end{lemma}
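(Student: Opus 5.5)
For $|\gamma| = n$, Lemma~\ref{lem:Jlaminjenv} identifies $J_{\gamma,n}$ with $I_\gamma$. By (A5), $I_\gamma$ is $K$-filtered of degree $n$, and each $K_\mu$ has finite length by Corollary~\ref{cor:KLfl}, so $I_\gamma$ has finite length. The real content is therefore the case $|\gamma| < n$. Here $J_{\gamma,n-1}$ is known to have finite length by induction, and the task is to control how much larger the envelope becomes when we pass from $\cA_{\leq n-1}$ to $\cA_{\leq n}$.

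My plan is to construct a finite length injective object of $\cA_{\leq n}$ containing $H_\gamma$. For that it suffices to show $J_{\gamma,n}$ embeds in such an object. I will build an extension
\[ 0 \to J_{\gamma,n-1} \to X \to Q \to 0, \]
where $Q$ is a finite direct sum of the objects $I_\mu$ with $\mu \in \cL_n$, chosen so that $X$ is injective in $\cA_{\leq n}$. The construction is a universal extension. By (A2) and Lemma~\ref{lem:artinian}, the space $V_\mu = \ext^1_{\cA}(I_\mu, J_{\gamma,n-1})$ is finite-dimensional, since $I_\mu$ is finitely generated. I take $Q = \bigoplus_{\mu \in \cL_n} V_\mu \otimes I_\mu$, which is a finite sum by (A1), and let $X$ be the universal extension of $Q$ by $J_{\gamma,n-1}$. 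Then $X$ has finite length, lies in $\cA_{\leq n}$, and contains $H_\gamma$ as a subobject of $J_{\gamma,n-1}$.

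The key step, and the main obstacle, is showing that $X$ is injective in $\cA_{\leq n}$. By (A11) and Proposition~\ref{prop:baer}, it suffices to show $\ext^1_{\cA_{\leq n}}(M, X) = 0$ for finitely generated $M$, hence for finite length $M$ by Lemma~\ref{lem:artinian}. By dévissage this reduces to the simple objects $H_\delta$ with $\delta \in \cL_{\leq n}$, using Lemma~\ref{lem:simpleclass}.

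\textbf{Case $|\delta| < n$.} Here $\ext^1_{\cA_{\leq n}}(H_\delta, Q) = 0$, since each $I_\mu$ is injective in $\cA_{\leq n}$. It remains to kill $\ext^1_{\cA_{\leq n}}(H_\delta, J_{\gamma,n-1})$. This is not automatic: $J_{\gamma,n-1}$ is injective only in $\cA_{\leq n-1}$, and an extension of $H_\delta$ by $J_{\gamma,n-1}$ inside $\cA_{\leq n}$ need not lie in $\cA_{\leq n-1}$. So the plan is instead to show that every extension class of a finitely generated object by $J_{\gamma,n-1}$ in $\cA_{\leq n}$ becomes zero after pushout to $X$. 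To see this, write such an extension $0 \to J_{\gamma,n-1} \to Y \to M \to 0$. Let $Y' \subset Y$ be the largest subobject lying in $\cA_{\leq n-1}$; the extension splits over $Y'$ because $J_{\gamma,n-1}$ is injective in $\cA_{\leq n-1}$. The quotient $Y/Y'$ has socle built from the $H_\mu$ with $|\mu| = n$, so it embeds in a sum of copies of $I_\mu$. The class of the extension is then controlled by maps into the $I_\mu$, and these are exactly what the universal extension was designed to absorb via the pullback along the $V_\mu$.

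\textbf{Case $|\delta| = n$.} Use the sequence $0 \to H_\delta \to K_\delta \to N \to 0$ from part (c), with $N \in \cA_{\leq n-1}$. Together with (A4) and Lemma~\ref{lem:auxiliary}(b), this reduces the vanishing to that for $N$, which was handled in the previous case.

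If the universal-extension bookkeeping becomes too messy, the fallback is to iterate the construction a finite number of times. Finiteness of the iteration is guaranteed by (A1) and (A2), and each iterate remains of finite length.

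Finally, once $X$ is shown to be injective, $J_{\gamma,n}$ is a direct summand of $X$, being the injective envelope of $H_\gamma \subset X$, and so it has finite length.
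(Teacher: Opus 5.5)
Your proposal has a genuine gap. The universal extension $X$ is in general \emph{not} injective in $\cA_{\leq n}$, and your argument for the case $|\delta| = n$ does not work.

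You also have the difficulty reversed. For $|\delta| < n$ nothing needs doing: $\cA_{\leq n-1}$ is closed under extensions in $\cA$, so $\ext^1_{\cA_{\leq n}}(H_\delta, J_{\gamma,n-1}) \cong \ext^1_{\cA_{\leq n-1}}(H_\delta, J_{\gamma,n-1}) = 0$. The entire obstruction lives in $\ext^1_\cA(H_\delta, J_{\gamma,n-1})$ with $|\delta| = n$.

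For that case you invoke (A4) and Lemma~\ref{lem:auxiliary}(b). These only control $\ext^\bullet_\cA(K_\delta, I_\mu)$ with $|\mu| \geq |\delta|$. They say nothing about $\ext^1_\cA(K_\delta, J_{\gamma,n-1})$, where a degree-$n$ object maps into a lower-degree one; already the $\Hom$ there is usually nonzero, which is how $H_\delta$ is cut out. The long exact sequence would moreover require $\ext^2_{\cA_{\leq n}}(N, X) = 0$, which you do not have.

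The choice $V_\mu = \ext^1_\cA(I_\mu, J_{\gamma,n-1})$ is also the wrong one, because restriction to $\ext^1_\cA(H_\mu, J_{\gamma,n-1})$ need not be surjective. Here is a concrete counterexample.
\begin{itemize}
\item Let $\cA$ be the module category of the Nakayama algebra whose indecomposable projectives are uniserial, with radical layers $S_0,S_1,S_0,S_1$ for $P(S_0)$ and $S_1,S_0,S_1$ for $P(S_1)$. Then $E(S_1) = P(S_0)$, and $E(S_0)$ is uniserial with layers $S_0,S_1,S_0$.
\item Take $K_0 = I_0 = S_0$, let $K_1$ be the uniserial module with top $S_0$ and socle $S_1$, and take $I_1 = E(S_1)$.
\item One checks (A1)--(A11). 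For instance, $I_1/K_1 \cong K_1$, $E(S_0)$ is an extension of $K_1$ by $K_0$, $\ext^i_\cA(S_0, K_1) = 0$ for all $i$, and $\ext^i_\cA(S_0,S_0) = 0$ for $i > 0$.
\item Here $J_{0,0} = S_0$ and $V_1 = \ext^1_\cA(P(S_0), S_0) = 0$. So your $X$ is just $S_0$, and iterating changes nothing since $I_1$ is projective.
\item However, $J_{0,1} = E(S_0)$ has length $3$, since $\ext^1_\cA(S_1, S_0) \neq 0$.
\item Moreover, $E(S_0)$ gives a nonzero class in $\ext^1_\cA(K_1, S_0)$, so the vanishing your second case needs is false.
\end{itemize}
In general $J_{\gamma,n}/J_{\gamma,n-1}$ is only a \emph{subobject} of a finite sum of $I_\mu$'s; in the example it is $K_1 \subsetneq I_1$. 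So an extension of such a sum by $J_{\gamma,n-1}$ has no reason to be injective.

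The missing idea is to analyze the actual envelope instead of building one. The right adjoint $G$ of the inclusion $\cA_{\leq n-1} \hookrightarrow \cA_{\leq n}$ preserves injectives, so $G(J_{\gamma,n}) \cong J_{\gamma,n-1}$, which has finite length by induction. Apply $\Hom(H_\delta,-)$ to $0 \to J_{\gamma,n-1} \to J_{\gamma,n} \to C \to 0$. This shows that $\soc(C)$ has no constituents of degree $< n$, by the extension-closure remark above. It also shows that each $H_\delta$ with $|\delta| = n$ occurs with multiplicity at most $\dim_k \ext^1_\cA(H_\delta, J_{\gamma,n-1})$, which is finite by (A2), and there are finitely many such $\delta$ by (A1). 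Since $\cA_{\leq n}$ is locally artinian (Lemma~\ref{lem:artinian}), the socle is essential. Hence $C$ embeds into a finite sum of the $I_\delta = J_{\delta,n}$ (Lemma~\ref{lem:Jlaminjenv}), which have finite length, and so $C$ and $J_{\gamma,n}$ have finite length. Your treatment of the case $|\gamma| = n$ is fine.
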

\begin{proof}
If $\gamma \in \cL_n$, then $J_{\gamma, n} = I_{\gamma}$ by Lemma~\ref{lem:Jlaminjenv}, which is finitely generated and hence finite length by Lemma~\ref{lem:artinian}. So for the remainder we may assume $\gamma \in \cL_{<n}$.

Let $G \colon \cA_{\leq n} \to \cA_{\leq n-1}$ be the right adjoint to the inclusion functor $\cA_{\leq n-1} \to \cA_{\leq n}$. The functor $G$ sends an object to the largest subobject lying in $\cA_{\leq n-1}$ (which exists since $\cA_{\leq n-1}$ is closed under colimits). Furthermore, $G$ takes injectives to injectives as its left adjoint is exact.
So $G(J_{\gamma, n})$ is an injective object of $\cA_{\leq n-1}$ with socle $H_{\gamma}$ which implies $G(J_{\gamma, n}) \cong J_{\gamma, n-1}$ and so by part (d) for $\cA_{\leq n-1}$, this subobject is finite length. Let $C = \coker(J_{\gamma, n-1} \to J_{\gamma, n})$; we show that $C$ is finite length.

Note that since we have proved $\cA_{\leq n}$ is locally artinian, the socle of an object is an essential subobject. So we first compute the socle of $C$. Let $\delta \in \cL_{\leq n}$. The long exact sequence associated to $\Hom_{\cA_{\leq n}}(H_{\delta}, -)$ is
\[\begin{tikzcd}
& {\Hom_{\cA_{\leq n}}(H_{\delta}, J_{\gamma, n-1})} & {\Hom_{\cA_{\leq n}}(H_{\delta}, J_{\gamma, n})} & {\Hom_{\cA_{\leq n}}(H_{\delta}, C)} \\
& {\ext^1_{\cA_{\le n}}(H_{\delta}, J_{\gamma, n-1})} & {\ext^1_{\cA_{\le n}}(H_{\delta}, J_{\gamma, n}) = 0;}
\arrow[from=1-2, to=1-3]
\arrow[from=1-3, to=1-4]
\arrow[from=2-2, to=2-3]
\arrow[from=1-4, to=2-2, rounded corners,
  to path={ -- ([xshift=2ex]\tikztostart.east)
            |- ([xshift=-2ex, yshift=3mm]\tikztotarget.north west) \tikztonodes
            |- (\tikztotarget.west) }]
\end{tikzcd}\]
the last term is $0$ because $J_{\gamma, n}$ is injective in $\cA_{\leq n}$. For $\delta \in \cL_{< n}$, we have $\ext^1_{\cA_{\le n}}(H_{\delta}, J_{\gamma, n-1}) \cong \ext^1_{\cA_{\leq n-1}}(H_{\delta}, J_{\gamma, n-1}) = 0$ since $J_{\gamma, n-1}$ is injective in $\cA_{\leq n-1}$; in this case, the first map in the long exact sequence is an isomorphism so $\Hom_{\cA_{\leq n}}(H_{\delta}, C) = 0$. For $\delta \in \cL_n$, the term $\Hom_{\cA_{\leq n}}(H_{\delta}, J_{\gamma, n})$ vanishes because $\soc(J_{\gamma, n}) = H_{\gamma}$ and $\delta \neq \gamma$, so $\Hom_{\cA_{\leq n}}(H_{\delta}, C) \cong \ext^1_{\cA_{\leq n}}(H_{\delta}, J_{\gamma, n-1}) \cong \ext^1_{\cA}(H_{\delta}, J_{\gamma, n-1})$, which is finite-dimensional by Property~(A2). So $\soc(C)$ is a finite direct sum of copies of $H_{\delta}$ with $\delta$ varying over $\cL_n$, so by Property (A1), the socle is finite length. Since $\soc(C)$ is essential in $C$, by Lemma~\ref{lem:Jlaminjenv}, the object $C$ embeds into a finite direct sum of copies $J_{\delta, n}$ with $\delta \in \cL_n$; each such $J_{\delta, n}$ is finite length by the first paragraph, so $C$ is finite length, thus so is $J_{\gamma, n}$.
\end{proof}
The final part of the theorem follows from the above proof.
\begin{corollary}\label{cor:grsubK}
    For all $\gamma \in \cL_{\leq n}$, the injective object $J_{\gamma, n}$ is gr-subK of degree $\leq n$.
\end{corollary}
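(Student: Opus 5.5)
We argue by induction on $n$, using the short exact sequence from the proof of Lemma~\ref{lem:Ilower-finite}. If $\gamma \in \cL_n$, then $J_{\gamma,n} = I_\gamma$ by Lemma~\ref{lem:Jlaminjenv}. By Property~(A5), $I_\gamma$ is an extension of $I_\gamma/K_\gamma$ by $K_\gamma$, and both are $K$-filtered of degree $n$. So $I_\gamma$ is $K$-filtered of degree $n$, and in particular gr-subK of degree $\leq n$ (take the trivial filtration, with each graded piece mapping by the identity into itself).

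Now let $\gamma \in \cL_{<n}$. From the proof of Lemma~\ref{lem:Ilower-finite} we have the exact sequence
\[ 0 \to J_{\gamma,n-1} \to J_{\gamma,n} \to C \to 0. \]
By part (f) for $\cA_{\leq n-1}$ (the induction hypothesis), $J_{\gamma,n-1}$ is gr-subK of degree $\leq n-1$. It is therefore also gr-subK of degree $\leq n$, since an object $K$-filtered of degree $\leq n-1$ is $K$-filtered of degree $\leq n$.

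It remains to treat $C$. The proof of Lemma~\ref{lem:Ilower-finite} shows that $C$ embeds into a finite direct sum $\bigoplus_i J_{\delta_i,n}$ with each $\delta_i \in \cL_n$. By the first paragraph, each $J_{\delta_i,n} = I_{\delta_i}$ is $K$-filtered of degree $n$. By Lemma~\ref{lem:closure}, the direct sum (an iterated split extension) is $K$-filtered of degree $\leq n$. Hence $C$, being a subobject of a $K$-filtered object of degree $\leq n$, is gr-subK of degree $\leq n$, again by Lemma~\ref{lem:closure}.

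Finally, $J_{\gamma,n}$ is an extension of $C$ by $J_{\gamma,n-1}$, both gr-subK of degree $\leq n$. By the last clause of Lemma~\ref{lem:closure}, $J_{\gamma,n}$ is gr-subK of degree $\leq n$. Concretely, we concatenate the filtration of $J_{\gamma,n-1}$ with the preimage in $J_{\gamma,n}$ of the filtration of $C$; the graded pieces are unchanged.

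The only point needing care is confirming that the embedding of $C$ into copies of $J_{\delta,n}$ with $|\delta| = n$ is exactly what the previous proof established. It is: the socle of $C$ was shown to consist only of the simples $H_\delta$ with $\delta \in \cL_n$, with finite multiplicity, and that socle is essential in $C$. Everything else is bookkeeping with Lemma~\ref{lem:closure}.
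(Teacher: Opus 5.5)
Your proposal is correct and follows the paper's proof essentially step for step. For $\gamma \in \cL_n$ you use (A5) together with $J_{\gamma,n} = I_\gamma$; for $\gamma \in \cL_{<n}$ you use the sequence $0 \to J_{\gamma,n-1} \to J_{\gamma,n} \to C \to 0$, the induction hypothesis for $J_{\gamma,n-1}$, the embedding of $C$ into a finite sum of $I_\delta$ with $\delta \in \cL_n$, and Lemma~\ref{lem:closure} to assemble the pieces.
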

\begin{proof}
    First assume $\gamma \in \cL_n$. Then by Property~(A5) and Lemma~\ref{lem:Jlaminjenv}, the object $J_{\gamma, n} = I_{\gamma}$ is $K$-filtered of degree $\leq n$, being an extension of the $K$-filtered object $I_{\gamma}/K_{\gamma}$ by $K_{\gamma}$. In particular, it is gr-subK of degree $\leq n$.

   Assume instead that $\gamma \in \cL_{<n}$, and consider the short exact sequence $0 \to J_{\gamma, n-1} \to J_{\gamma, n} \to C \to 0$ from the previous proof. By part (f) for $\cA_{\leq n-1}$, the object $J_{\gamma, n-1}$ is gr-subK of degree $\leq n-1$, hence of degree $\leq n$. As we showed in the previous proof, the object $C$ embeds into a finite direct sum $\bigoplus_{\delta \in \cL_n} I_{\delta}$, which is $K$-filtered of degree $\leq n$ by Lemma~\ref{lem:closure} and the previous paragraph; hence $C$ is gr-subK of degree $\leq n$. Another application of Lemma~\ref{lem:closure} yields the result. 
\end{proof}

To summarize: for $\cA_{\leq n}$, we have part (a) in Lemma~\ref{lem:artinian}, part(b) in Lemma~\ref{lem:distinct} and Lemma~\ref{lem:simpleclass}, part (c) in Lemma~\ref{lem:KLHLsimples}, part (d) in Lemma~\ref{lem:Ilower-finite}, part (e) in Lemma~\ref{lem:Jlaminjenv}, and part (f) in Corollary~\ref{cor:grsubK}, respectively. This concludes the proof by induction.

\subsection{Auxiliary results}
We deduce some corollaries of Theorem~\ref{thm:main} so we assume we are in the setting of the theorem.
\begin{lemma}\label{lem:Kembed}
    Let $M$ be a finitely generated object in $\cA_{\leq n}$. Let $N$ be a $K$-filtered object with a filtration $0 = N^0 \subset N^1 \subset \ldots \subset N^r = N$ such that the successive quotients are of the form $K_{\lambda(i)}$ with $|\lambda(i)|>n$ for all $i \in [r]$. Then $\Hom_{\cA}(M, N) = 0$.
\end{lemma}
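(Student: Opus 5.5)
By d\'evissage on both arguments, we reduce to showing $\Hom_{\cA}(H_\gamma, K_\mu) = 0$ for every $\gamma \in \cL_{\leq n}$ and every $\mu$ with $|\mu| > n$.

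First, on the target, we induct on $r$. For the exact sequence $0 \to N^{r-1} \to N \to K_{\lambda(r)} \to 0$, left exactness of $\Hom_{\cA}(M,-)$ shows it suffices to prove $\Hom_{\cA}(M, K_\mu) = 0$ for each $\mu$ with $|\mu| > n$.

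Next, on the source. By Theorem~\ref{thm:main}(a), $M$ has finite length in $\cA_{\leq n}$, and by part (b) its composition factors are of the form $H_\gamma$ with $\gamma \in \cL_{\leq n}$. Inducting on length and using left exactness of $\Hom_{\cA}(-,K_\mu)$ on $0 \to M' \to M \to M'' \to 0$, it suffices to show $\Hom_{\cA}(H_\gamma, K_\mu) = 0$.

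For this core case, any nonzero $H_\gamma \to K_\mu$ is injective because $H_\gamma$ is simple. Composing with $K_\mu \subset I_\mu$ gives a nonzero map $H_\gamma \to I_\mu$. Now $I_\mu$ is injective in $\cA_{\leq |\mu|}$ with socle $H_\mu$ by Theorem~\ref{thm:main}(e) (equivalently Lemma~\ref{lem:Jlaminjenv} applied at level $|\mu|$). Since $H_\gamma$ also lies in $\cA_{\leq |\mu|}$, the image of $H_\gamma$ is a simple subobject of $I_\mu$ and hence lies in $\soc(I_\mu) = H_\mu$. This forces $H_\gamma \cong H_\mu$, and Theorem~\ref{thm:main}(b) applied in $\cA_{\leq |\mu|}$ then gives $\gamma = \mu$, contradicting $|\gamma| \leq n < |\mu|$.

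A second route would be to extend $H_\gamma \hookrightarrow K_\gamma$ to $K_\gamma \to I_\mu$, which vanishes by Lemma~\ref{lem:auxiliary}(b). But that extension requires a lift along the injection $H_\gamma \hookrightarrow K_\gamma$, which exists only because $I_\mu$ is injective in $\cA_{\leq|\mu|}$, so the socle argument above is more direct.

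The main obstacle is the step asserting that $\soc(I_\mu) = H_\mu$ with $I_\mu$ the injective envelope. This needs the theorem at level $|\mu|$ rather than $n$, which is legitimate since the theorem holds for all $n$. It also needs the fact that an injective envelope has simple socle equal to the simple it envelops, which holds because $\cA_{\leq |\mu|}$ is locally artinian and the envelope is essential.
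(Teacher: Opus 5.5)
Your proposal is correct and is essentially the paper's argument. You induct on $r$ to reduce to a single $K_\mu$ with $|\mu|>n$, then use $\soc(K_\mu)\subset\soc(I_\mu)=H_\mu$ (Lemma~\ref{lem:Jlaminjenv}, i.e.\ part (e) at level $|\mu|$) together with the distinctness of the $H_\gamma$ from part (b). The only cosmetic difference is that you further d\'evissage $M$ down to its composition factors $H_\gamma$, whereas the paper observes directly that a nonzero image of $M$ in $K_\mu$ would have to contain $H_\mu$, which is not a constituent of $M$.
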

\begin{proof}
   First assume $r = 1$. Then the socle of $N$ is $H_{\lambda(1)}$ with $|\lambda(1)| > n$.  So any map $M \to N$ is zero as the image, if nonzero, must contain $H^{\lambda(1)}$, but since $M$ lies in $\cA_{\leq n}$, all its simple constituents are of the form $H_{\gamma}$ with $|\gamma|\leq n$. The result easily follows by induction on $r$.
\end{proof}

\begin{corollary}\label{cor:Kembed}
    Let $M$ be a finitely generated object in $\cA_{\leq n}$. Then $M$ embeds into a $K$-filtered object of degree $\leq n$.
\end{corollary}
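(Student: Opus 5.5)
By Property~(A10), the finitely generated object $M$ embeds into some $K$-filtered object $N'$. The plan is to cut $N'$ down to its part of degree $\leq n$ and show that $M$ still embeds there.

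First, apply Lemma~\ref{lem:filtration1} to $N'$. This gives a filtration $0 = N'^{\le -1} \subset N'^{\le 0} \subset \cdots \subset N'^{\le t} = N'$ whose successive quotients $N'^{\le i}/N'^{\le i-1}$ are $K$-filtered of degree $i$. We may assume $t \ge n$ by padding with repeated terms. Set $C = N'^{\le n}$. Then $C$ is $K$-filtered of degree $\leq n$, since it is an iterated extension of $K$-filtered objects of degrees $0,\dots,n$ (Lemma~\ref{lem:closure}). Likewise, $N = N'/C$ admits a filtration whose successive quotients are $K_{\lambda(i)}$ with $|\lambda(i)| > n$: refine each graded piece of degree $i>n$ into copies of $K_\lambda$ with $|\lambda| = i$.

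Next, consider the composite $M \hookrightarrow N' \to N'/C = N$. Because $M$ is finitely generated and lies in $\cA_{\leq n}$, Lemma~\ref{lem:Kembed} gives $\Hom_{\cA}(M, N) = 0$, so this composite vanishes. Hence the embedding $M \hookrightarrow N'$ factors through $C$. Since the original map was a monomorphism, the induced map $M \to C$ is also a monomorphism, and $C$ is $K$-filtered of degree $\leq n$.

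I expect no real obstacle here. The only points to check are that the quotient $N'/C$ genuinely has a $K$-filtration of the form required by Lemma~\ref{lem:Kembed}, which follows from the construction in Lemma~\ref{lem:filtration1}, and the degenerate case $t < n$, where $C = N'$ already works.
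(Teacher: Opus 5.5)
Your proof is correct and follows essentially the same route as the paper: embed via (A10), use Lemma~\ref{lem:filtration1} to split off the degree-$\leq n$ part $C$ with quotient filtered by $K_\lambda$ with $|\lambda|>n$, and then apply Lemma~\ref{lem:Kembed} to force the embedding to factor through $C$. Your extra care in refining the higher-degree graded pieces into copies of $K_\lambda$, and in handling the case where the quotient is zero, only makes explicit what the paper leaves implicit.
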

\begin{proof}
    By Property (A10), the object $M$ embeds into a $K$-filtered object $N$. Call this injection $\phi$.
    By Lemma~\ref{lem:filtration1}, there exists a subobject $N' \subset N$ such that $N'$ is $K$-filtered of degree $\leq n$ and $N/N'$ has a filtration such that the successive quotients are of the form $K_{\lambda}$ with $|\lambda| > n$. The image of the map $\phi$ must be contained in $N'$: otherwise, we obtain a nonzero map $M \to N/N'$ contradicting Lemma~\ref{lem:Kembed}. The induced map $\overline{\phi} \colon M \to N'$ satisfies the requisite property.
\end{proof}

\begin{corollary}\label{cor:Kembed2}
Let $\lambda \in \cL_n$. 
The object $H_{\lambda}$ is the intersection of the kernels of all maps $K_{\lambda}$ to $K$-filtred objects of degree $< n$.
\end{corollary}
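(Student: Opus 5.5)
Let $H'_{\lambda}$ denote the intersection of the kernels of all maps from $K_{\lambda}$ to $K$-filtered objects of degree $< n$. We will show $H_{\lambda} = H'_{\lambda}$ by proving both containments.

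For $H_{\lambda} \subset H'_{\lambda}$, take any morphism $\phi \colon K_{\lambda} \to D$ with $D$ $K$-filtered of degree $\leq n-1$. Such a $D$ is finitely generated, being an iterated extension of the finitely generated objects $K_{\mu}$. It also lies in $\cA_{\leq n-1}$, since each $K_{\mu}$ with $|\mu| \leq n-1$ is one of the generators of this localizing subcategory, which is closed under extensions. So $D$ is a finitely generated object of $\cA_{\leq n-1}$, and by Definition~\ref{defn:HLambda} we get $H_{\lambda} \subset \ker \phi$. Intersecting over all such $\phi$ gives $H_{\lambda} \subset H'_{\lambda}$.

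For $H'_{\lambda} \subset H_{\lambda}$, we need every map to a finitely generated object of $\cA_{\leq n-1}$ to be detected by maps to $K$-filtered objects of degree $<n$. Let $\phi \colon K_{\lambda} \to C$ with $C$ finitely generated in $\cA_{\leq n-1}$. By Corollary~\ref{cor:Kembed}, applied with $n-1$ in place of $n$, there is an injection $j \colon C \hookrightarrow D$ with $D$ $K$-filtered of degree $\leq n-1$. Since $j$ is injective, $\ker(j \circ \phi) = \ker \phi$. As $j \circ \phi$ is one of the maps defining $H'_{\lambda}$, we get $H'_{\lambda} \subset \ker \phi$. Intersecting over all $\phi$ gives $H'_{\lambda} \subset H_{\lambda}$.

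The only point needing care is the case $n = 0$, where Corollary~\ref{cor:Kembed} would be applied in degree $-1$. Here $\cA_{\leq -1} = 0$ and the only $K$-filtered object of degree $<0$ is $0$, so both intersections equal $K_{\lambda}$ and the statement holds trivially. Otherwise the argument is a direct combination of the definition with Corollary~\ref{cor:Kembed}, and I expect no real obstacle.
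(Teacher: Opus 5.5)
Your proposal is correct and follows the paper's proof: both use Corollary~\ref{cor:Kembed} (applied in degree $n-1$) to replace an arbitrary finitely generated target in $\cA_{\leq n-1}$ by a $K$-filtered object of degree $<n$, which leaves the kernel unchanged because the embedding is injective. You additionally write out the easy reverse containment and the $n=0$ case, both of which the paper leaves implicit.
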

\begin{proof}
    The object $H_{\lambda}$ is defined to be the intersection of kernels of maps $K_{\lambda} \to M$ as $M$ varies over finitely generated objects in $\cA_{\leq n-1}$ 
    But by Corollary~\ref{cor:Kembed}, for any finitely generated object $M$, there exists an injective map $M \to M'$ with $M'$ being a $K$-filtered object of degree $< n$. The result follows by combining the previous two statements.
\end{proof}

\section{VI-modules in non-describing characteristic}\label{s:vi}
Throughout this section, we fix a finite field $F$ of characteristic $p$ with $q$ elements and let $k$ be an arbitrary field of characteristic $\ne p$.

For a category $\cD$, a \textit{$\cD$-module} is a functor $\cD \to \Mod_k$. The category of $\cD$-modules with morphisms being natural transformations is denoted $[\cD,\Mod_k]$; this is a $k$-linear Grothendieck abelian category.

We let $\VI_F$ be the category of finite-dimensional vector spaces over $F$ with injective linear maps, and $\VB_F$ be the subcategory of $\VI_F$ with only invertible maps. 
Let $[\VI_F, \Mod_k]^{\gen} = [\VI_F, \Mod_k]/[\VI_F, \Mod_k]^{\lf}$ where $[\VI_F, \Mod_k]^{\lf}$ is the localizing subcategory of locally finite-length objects. 

For the remainder of this section, we let $\cB = [\VI_F, \Mod_k]$, $\VBm = [\VB_F, \Mod_k]$, and $\VIg = [\VI_F, \Mod_k]^{\gen}$. We shall supress the field $F$ from $\VI_F$ (so a $\VI$-module means an object in $\cB$) and also from $\VB_F$.

The left adjoint to the forgetful functor $\cB \to \VBm$ is denoted 
\[\cI \colon \VBm \to \cB.\]
The right adjoint to the exact functor $T \colon \cB \to \VIg$ is $S \colon \VIg \to \cB$. These composition $TS \cong \id_{\VIg}$.

A $\VI$-module is \emph{induced} if it is finitely generated and in the essential image of $\cI$. A $\VI$-module is \emph{semi-induced} if it has a finite filtration such that the successive quotients are induced.

\subsection{Past results}
We recall some important results about $\VI$-modules, mostly due to Nagpal \cite{nag19vi}.
\begin{theorem}\label{thm:noetherian}
    The category $\cB$ is locally noetherian.
\end{theorem}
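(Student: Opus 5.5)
The plan is to deduce local noetherianity of $\cB$ from noetherianity of the principal projectives, which is the standard reduction used in this subject. For each $d \geq 0$ let $P_d = k[\Hom_{\VI}(F^d, -)]$ be the representable (principal projective) $\VI$-module. By Yoneda, $\Hom_{\cB}(P_d, M) = M(F^d)$. Hence the $P_d$ form a set of finitely generated projective generators of $\cB$, and every finitely generated $\VI$-module is a quotient of a finite direct sum $\bigoplus_i P_{d_i}$. A Grothendieck category with a generating set of noetherian objects is locally noetherian. Since finite direct sums and quotients of noetherian objects are noetherian, it therefore suffices to show that each $P_d$ is a noetherian object, i.e.\ that every submodule of $P_d$ is finitely generated.

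For this I will invoke the noetherianity theorem for $\VI$-modules. This theorem is due to Putman--Sam \cite{ps17duke} and Sam--Snowden \cite{ss17gro} for arbitrary coefficient fields, and is reproved by Nagpal in \cite{nag19vi} in non-describing characteristic. If I were to give an argument rather than cite it, I would follow the Gröbner method of Sam--Snowden. Concretely, I would pass to an auxiliary category of ordered spaces (or a suitable variant of the category of finite sets and surjections) whose representables admit a well-partial-order on their monomials, via Higman-type lemmas. I would then show that the restriction functor to this auxiliary category preserves finite generation of $P_d$. It reflects noetherianity because it is faithful on subobject lattices.

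The main obstacle is this combinatorial step: establishing the well-partial-order on the $\VI$-morphisms $F^d \to F^n$ after choosing normal forms (reduced row echelon forms), and checking that the order is compatible with composition. Since the paper's section explicitly recalls past results, I would simply cite the theorem at this point. The reduction in the first paragraph is the only thing that needs to be spelled out.
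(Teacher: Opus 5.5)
Your proposal is correct and takes the same approach as the paper, which proves this simply by citing Sam--Snowden \cite[Corollary~8.3.3]{ss17gro} and Putman--Sam \cite[Theorem~A]{ps17duke}; your reduction to noetherianity of the representables $P_d$ is the standard unpacking of what those results assert, and the Gr\"obner sketch is not needed. One small point: the paper does not credit \cite{nag19vi} with a proof of noetherianity, so drop that attribution unless you can point to the specific result there.
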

\begin{proof}
    This was proved in independent works of Sam--Snowden \cite[Corollary~8.3.3]{ss17gro} and Putman--Sam \cite[Theorem~A]{ps17duke}.
\end{proof}

\begin{lemma}\label{lem:fext}
    Assume $M$ and $N$ are finitely generated $\VI$-modules. For all $i \geq 0$, we have $\dim_k(\ext^i_{\cB}(M, N)) < \infty$.
\end{lemma}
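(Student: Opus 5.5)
The plan is the standard one: resolve $M$ by finitely generated projectives and compute $\ext$ from that resolution.

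\textbf{Step 1: finitely generated projectives.} For each $d \geq 0$, the representable module $P_d = k[\Hom_{\VI}(F^d, -)]$ is projective by Yoneda, and it is finitely generated. Every finitely generated $\VI$-module is a quotient of a finite direct sum $\bigoplus_j P_{d_j}$.

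\textbf{Step 2: a resolution by finite sums of representables.} Because $\cB$ is locally noetherian (Theorem~\ref{thm:noetherian}), the kernel of any surjection $\bigoplus_j P_{d_j} \to M$ is again finitely generated. Iterating, $M$ has a projective resolution $P_\bullet \to M$ in which each term $P_i$ is a finite direct sum of representables $P_d$.

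\textbf{Step 3: reduce to $\Hom$ between representables.} We have $\ext^i_{\cB}(M,N) = H^i(\Hom_{\cB}(P_\bullet, N))$. So it suffices to show that $\dim_k \Hom_{\cB}(P_i, N) < \infty$ for each $i$. By additivity and Yoneda,
\[
\Hom_{\cB}(P_d, N) \cong N(F^d).
\]
Since a cohomology group of a complex of finite-dimensional spaces is finite-dimensional, it remains only to see that $\dim_k N(F^d) < \infty$.

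\textbf{Step 4: finiteness of $N(F^d)$.} Since $N$ is finitely generated, it is a quotient of some $\bigoplus_j P_{e_j}$. Evaluating at $F^d$, we get that $N(F^d)$ is a quotient of $\bigoplus_j k[\Hom_{\VI}(F^{e_j}, F^d)]$. Each $\Hom_{\VI}(F^{e_j}, F^d)$ is a finite set because $F$ is finite, so $N(F^d)$ is finite-dimensional.

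The only nontrivial input is noetherianity (Theorem~\ref{thm:noetherian}), which guarantees that the projective resolution can be taken to consist of finitely generated terms. Beyond that, no step presents a real obstacle.
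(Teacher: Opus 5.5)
Your proposal is correct and follows the paper's own argument: noetherianity (Theorem~\ref{thm:noetherian}) gives a resolution of $M$ by finitely generated projectives, and $\ext^i_{\cB}(M,N)$ is computed by a complex of finite-dimensional $\Hom$-spaces. You additionally justify why those $\Hom$-spaces are finite-dimensional (via $\Hom_{\cB}(P_d, N) \cong N(F^d)$ and the finiteness of $\Hom_{\VI}(F^e, F^d)$), a point the paper leaves implicit.
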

\begin{proof}
   Since $M$ is finitely generated and $\cB$ is locally noetherian (Theorem~\ref{thm:noetherian}), there exists a projective resolution 
   \[
   \ldots \to P_1 \to P_0 \to M \to 0
   \]
   with $P_i$ being a finitely generated projective $\VI$-module for $i \geq 0$.
   Applying the functor $\Hom_{\cB}(-, N)$ to the above, we obtain a complex
   \[
    0 \to \Hom_{\cB}(P_0, N) \to \Hom_{\cB}(P_1, N) \to \ldots
   \]
   whose homology is $\ext^i_{\cB}(M, N)$. Each term of the above complex is a finite-dimensional vector space over $k$, whence the result follows.
\end{proof}

\begin{theorem}[Embedding theorem] \label{thm:embedding}
   Assume $C$ is a finitely generated object in $\VIg$. There exists an injection $C \hookrightarrow D$ where  $D = T(M)$ is the image under $T$ of a finitely generated semi-induced $\VI$-module $M$.
\end{theorem}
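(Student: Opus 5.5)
Since this is stated in a section titled ``Past results'', I would prove it by reduction to Nagpal's structure theory for finitely generated $\VI$-modules in non-describing characteristic \cite{nag19vi}.

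First, I lift $C$ to $\cB$. Since $T$ is exact and essentially surjective with $TS \cong \id_{\VIg}$, any finitely generated $C$ in $\VIg$ is of the form $T(M_0)$ with $M_0$ a finitely generated $\VI$-module. To arrange this, I take a finite set of generators of $C$ and lift them, via the adjunction $T \dashv S$, to a map from a finitely generated projective $\VI$-module $P$ to $S(C)$. The image of this map is a finitely generated submodule $M_0 \subset S(C)$, and $T(M_0) \to TS(C) \cong C$ is surjective, hence an isomorphism because $T$ is exact and $T(M_0) \subset T(S(C))$.

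Second, I invoke Nagpal's shift theorem for finitely generated $\VI$-modules in non-describing characteristic \cite{nag19vi}: for $n \gg 0$, the shift $\Sigma^n M_0$ is semi-induced. There is also a complex
\[ 0 \to M_0 \to I^0 \to I^1 \to \cdots \to I^N \to 0 \]
whose terms $I^j$ are semi-induced and whose homology is torsion, meaning locally finite length. The map $M_0 \to I^0$ is built from the shift maps $M_0 \to \Sigma^n M_0$, and its kernel is the torsion submodule of $M_0$. I set $M = I^0$. Applying the exact functor $T$ kills the torsion kernel, so $T(M_0) \to T(M)$ is injective, and $D = T(M)$ is the image of a finitely generated semi-induced module, as required.

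The main obstacle is the precise input from \cite{nag19vi}: I need both that $\ker(M_0 \to \Sigma^n M_0)$ is torsion and that $\Sigma^n M_0$ is semi-induced for large $n$. The kernel statement is the standard fact that the shift map $M_0 \to \Sigma M_0$ has torsion kernel. The semi-inducedness of large shifts is the deep part of Nagpal's work, and I would cite it directly. Everything else is formal, using only the exactness of $T$ and the fact that $T$ annihilates the localizing subcategory $[\VI_F, \Mod_k]^{\lf}$.
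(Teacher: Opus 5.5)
Your proposal is correct and follows essentially the same route as the paper: lift $C$ to a finitely generated $\VI$-module $M_0$, use Nagpal's shift theorem to map $M_0$ with torsion kernel to a finitely generated semi-induced module (concretely $\Sigma^n M_0$ for $n \gg 0$), and apply the exact functor $T$. You give more detail than the paper, which simply cites \cite[Theorem~4.38]{nag19vi}: you justify the lift of $C$ and explain why the shift map has torsion kernel. The complex $I^\bullet$ you mention is not actually needed.
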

\begin{proof}
    Let $N$ be a finitely generated $\VI$-module such that $T(N) \cong C$. By the shift theorem \cite[Theorem~4.38]{nag19vi}, we have a map $N \to M$ where $M$ is a finitely generated semi-induced module with kernel being torsion. Applying the functor $T$, we thus get an injection $T(N) \cong C \to D \coloneqq T(M)$ as required.
\end{proof}

\begin{proposition} \label{prop:inducedsat}
Assume $M$ is a semi-induced $\VI$-module. The map $M \to ST(M)$ is an isomorphism and $\rR^iS(T(M)) = 0$ for all $i > 0$.   

Conversely, assume $M$ is a finitely generated $\VI$-module such that $M \to ST(M)$ is an isomorphism and $\rR^iS(T(M)) = 0$ for all $i > 0$. Then $M$ is semi-induced.
\end{proposition}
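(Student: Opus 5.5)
This result is essentially a restatement of Nagpal's results from Part~I \cite{nag19vi}, and my plan is to reduce both directions to his characterization of semi-induced modules via local cohomology. Write $\Gamma$ for the torsion (locally finite) functor on $\cB$ and $\rR^i\Gamma$ for its derived functors. The standard four-term exact sequence
\[0 \to \Gamma(M) \to M \to ST(M) \to \rR^1\Gamma(M) \to 0\]
holds for any Serre quotient by a localizing subcategory. Together with the isomorphisms $\rR^iS(T(M)) \cong \rR^{i+1}\Gamma(M)$ for $i \geq 1$, it shows that both conditions in the proposition are equivalent to the single statement $\rR^i\Gamma(M) = 0$ for all $i \geq 0$, i.e.\ that $M$ is ``derived saturated''. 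I will derive these identifications in the usual way: apply $\Hom_{\cB}(-,E)$-type arguments to an injective resolution of $M$ in $\cB$, using the fact that $S$ preserves injectives because $T$ is exact. I also need that injective $\VI$-modules have torsion-free quotients by their torsion part, so that $T$ of an injective resolution of $M$ is an injective resolution of $T(M)$. This is the routine part, and I will cite the standard formalism rather than reprove it.

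For the forward direction, I first treat induced modules $M = \cI(V)$ with $V$ finite-dimensional in each degree, concentrated in finitely many degrees. The claim is that $\rR^i\Gamma(\cI(V)) = 0$ for all $i$; this is exactly \cite[Theorem~4.x/Proposition]{nag19vi}, where Nagpal shows that induced modules are acyclic for local cohomology in non-describing characteristic. Here the hypothesis $\operatorname{char} k \neq p$ enters, through the semisimplicity-type behaviour of the functors $\cI$ and the shift functor. Vanishing of all $\rR^i\Gamma$ is stable under extensions by the long exact sequence, so induction on the length of the filtration gives the result for all semi-induced $M$.

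For the converse, suppose $M$ is finitely generated with $\rR^i\Gamma(M)=0$ for all $i$. Nagpal's shift theorem (as used in the proof of Theorem~\ref{thm:embedding}) provides a complex
\[0 \to M \to M^0 \to M^1 \to \cdots \to M^r \to 0\]
of finitely generated semi-induced modules whose cohomology is torsion. In other words, $M$ is quasi-isomorphic modulo torsion to a bounded semi-induced complex. Applying $\rR\Gamma$ and using the forward direction, one sees that the cohomology of this complex is computed by $\rR\Gamma$ of $M$ shifted, which vanishes, so the complex is exact. An exact bounded complex of semi-induced modules has semi-induced kernels, provided that the cokernel of an injection between semi-induced modules with derived-saturated cokernel is again semi-induced. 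Nagpal proves this in his characterization of semi-induced modules via vanishing of derived homology of the shift/$\VI$-homology, \cite[Theorem~4.x]{nag19vi}. Splicing the short exact sequences from the right end then yields that $M$ is semi-induced.

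The main obstacle is this last step: deducing semi-inducedness from exactness. If it cannot be done cleanly, I would instead invoke directly Nagpal's homological criterion, namely that a finitely generated $\VI$-module is semi-induced iff its $\VI$-homology vanishes in positive degrees. I would then show that vanishing of $\rR\Gamma$ forces this via the comparison of local cohomology and $\VI$-homology degree bounds.
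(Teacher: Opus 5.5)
\textbf{Converse: the splicing step does not go through.} The paper gives no argument of its own here. It cites \cite[Corollary~4.22]{nag19vi} for the first statement and \cite[Theorem~4.36]{nag19vi} for the converse. Your forward direction (induced modules are $\Gamma$-acyclic, then d\'evissage) is a fine unpacking of the first citation. Your converse, however, has a genuine gap at exactly the step you flag.

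Once the complex $0\to M\to M^0\to\cdots\to M^r\to 0$ is known to be exact, splicing it from the right requires that the kernel of a \emph{surjection} between finitely generated semi-induced modules be semi-induced. The closure property you invoke instead (a derived-saturated cokernel of an injection between semi-induced modules is semi-induced) is not what the splicing uses. It is also nothing but the converse applied to that cokernel, which is finitely generated with vanishing $\rR\Gamma$. So appealing to it is circular, unless you cite Nagpal's Theorem~4.36 outright, which is what the paper does.

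The missing ingredient is a characterization of semi-induced modules that does not pass through local cohomology and is visibly stable under kernels of surjections. If the homological criterion you mention is available, this is where to use it. Given $0\to Z\to A\to B\to 0$ with $A,B$ having vanishing positive-degree $\VI$-homology, the long exact sequence gives the same for $Z$, and then your exactness argument finishes the proof. Your fallback of deducing the homological criterion from $\rR\Gamma(M)=0$ ``via degree bounds'' is not an argument as it stands.

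\textbf{The ``routine part'' is not formal.} For a general localizing subcategory, even of a locally noetherian category, the following can all fail:
\begin{itemize}
\item the four-term sequence $0\to\Gamma(M)\to M\to ST(M)\to\rR^1\Gamma(M)\to 0$;
\item the isomorphisms $\rR^iS(T(M))\cong\rR^{i+1}\Gamma(M)$;
\item $\Gamma$-acyclicity of torsion objects, which your exactness step needs in order to get $\rR\Gamma(D)\simeq D$ for a bounded complex $D$ with torsion cohomology.
\end{itemize}
For example, take the path algebra of $1\xrightarrow{a}2\xrightarrow{b}3$ modulo $ba$, with torsion the modules whose composition factors lie among the simples $k_1,k_3$. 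The injective resolution $0\to k_3\to P_2\to P_1\to k_1\to 0$ gives $\rR^2\Gamma(k_3)=k_1\neq 0$, although $T(k_3)=0$. Moreover, for the injective $P_2$ the unit $P_2\to ST(P_2)=P_1$ has cokernel $k_1\neq 0=\rR^1\Gamma(P_2)$.

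What you need is that $E/\Gamma(E)$ be \emph{injective} for every injective $E$ (for instance because $\Gamma$ preserves injectives); this makes $T$ preserve injectives. That $E/\Gamma(E)$ is torsion-free is automatic and is not the relevant point. This property has to be checked for $\VI$-modules rather than quoted from the general theory of Serre quotients.
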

\begin{proof}
    The first part is \cite[Corollary~4.22]{nag19vi} and the second part is \cite[Theorem~4.36]{nag19vi}.
\end{proof}

\begin{theorem}[Finiteness of $\rR S$] \label{thm:finiteness}
   Assume $C$ is a finitely generated object in $\VIg$. The $\VI$-module $\rR^iS (C)$ is finitely generated for all $i \geq 0$ and vanishes for $i \gg 0$.
\end{theorem}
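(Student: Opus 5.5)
The plan is to resolve $C$ by semi-induced modules and use the acyclicity from Proposition~\ref{prop:inducedsat}, arguing by dimension-shifting along a resolution of the form coming from the embedding theorem. First choose a finitely generated $\VI$-module $N$ with $T(N) \cong C$. Let $N_{\tors}$ be its torsion submodule; then $N/N_{\tors}$ is still finitely generated with the same image under $T$. By the shift theorem \cite[Theorem~4.38]{nag19vi} (as used in Theorem~\ref{thm:embedding}) there is an exact sequence $0 \to N \to M^0 \to N^1 \to 0$ modulo torsion, with $M^0$ finitely generated semi-induced. Applying $T$ gives a short exact sequence
\[ 0 \to C \to T(M^0) \to C^1 \to 0 \]
in $\VIg$, where $C^1 = T(M^0/\im N)$ is again finitely generated, since $\cB$ is locally noetherian (Theorem~\ref{thm:noetherian}) and quotients of finitely generated modules are finitely generated.

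Applying $\rR S$ and using $\rR^i S(T(M^0)) = 0$ for $i>0$ and $ST(M^0) \cong M^0$ from Proposition~\ref{prop:inducedsat}, the long exact sequence yields an exact sequence $0 \to S(C) \to M^0 \to S(C^1) \to \rR^1 S(C) \to 0$. It also gives isomorphisms $\rR^{i}S(C^1) \cong \rR^{i+1}S(C)$ for $i \geq 1$. Finite generation of $S(C)$ follows since it is a submodule of $M^0$ and $\cB$ is locally noetherian. Iterating, each $\rR^i S(C)$ is a subquotient of $S(C^i)$, and by induction on $i$ (the statement for $i=0$ holding for every finitely generated object) every $\rR^i S(C)$ is finitely generated.

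The main obstacle is vanishing for $i \gg 0$, since the iteration above need not terminate. I would attack it through a complexity measure: define the degree of $C$ as the smallest $d$ such that $C$ is a quotient of $T$ of a module generated in degrees $\leq d$. The aim is to arrange, using the precise form of the shift theorem, that the cokernel $C^1$ has strictly smaller degree, or that the torsion/local cohomology of $N$ is controlled by a bound of Nagpal type. Concretely, $\rR^i S T(N)$ is computed by local cohomology $H^{i+1}_{\mathfrak m}(N)$, and Nagpal's results \cite{nag19vi} show that $\Sigma^a N$ is semi-induced for some finite shift $a$, with the local cohomology of $N$ vanishing in degrees above a bound depending only on the generation and relation degrees. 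Thus I would reduce the claim to the statement that $H^j_{\mathfrak m}(N)=0$ for $j$ larger than roughly the generation degree. This follows by induction on that degree using the sequence $0\to N\to \Sigma N \to \Delta N\to 0$ modulo torsion, where the derivative $\Delta N$ is generated in strictly smaller degree. Establishing this derivative-lowers-degree compatibility with $\rR S$ is the step I expect to require the most care.
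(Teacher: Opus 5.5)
Your finite-generation argument is correct, and it is more than the paper gives, since the paper just cites \cite[Theorem~5.7]{nag19vi}. From $0 \to C \to T(M^0) \to C^1 \to 0$ you get $S(C) \subset M^0$, $\rR^1S(C)$ a quotient of $S(C^1)$, and $\rR^{i+1}S(C) \cong \rR^iS(C^1)$ for $i \geq 1$. Induction on $i$, quantified over all finitely generated $C$, then works.

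\textbf{The gap: vanishing for $i \gg 0$.} You do not prove it, and the mechanism you sketch fails for $\VI$. The claim that $\Delta N = \operatorname{coker}(N \to \Sigma N)$ is generated in strictly smaller degree is an $\FI$ phenomenon. For $\VI$ with $\Sigma N(V) = N(V \oplus F)$ it is false. Take $N = \cI(k)$, induced from the trivial representation of $\GL_1(F)$. Then $\dim_k N(F^n) = (q^n-1)/(q-1)$ and $\dim_k \Sigma N(F^n) = (q^{n+1}-1)/(q-1)$, and $N \to \Sigma N$ is injective. So $\dim_k \Delta N(F^n) = q^n$, which is unbounded. A module generated in degree $0$ has bounded dimension, so $\Delta N$ is not generated in degree $0$, and the same dimension count rules this out after passing to $\VIg$. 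Conceptually, the injections $F^d \to V \oplus F$ that do not land in $V$ but have injective $V$-component span a second summand of $\Sigma\cI(W)$. That summand is again induced from $\GL_d(F)$, so for $d \geq 1$ shifting never lowers the top degree.

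Two further problems remain even granting degree-lowering. The single-shift sequence does not plug into your long exact sequence: $\Sigma N$ is not semi-induced, so $\rR^iS(T(\Sigma N))$ need not vanish. And invoking a local cohomology bound ``of Nagpal type'' amounts to citing the statement itself.

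Your own long exact sequence does show what would suffice. Let $N$ be generated in degrees $\leq d$. Suppose the first step could always use a map $N \to M^0$ with torsion kernel, $M^0$ semi-induced, and cokernel $Q$ generated in degrees $< d$. Then $\rR^{i+1}S(C) \cong \rR^iS(T(Q))$ for $i \geq 1$, and induction on $d$ gives $\rR^iS(C) = 0$ for $i > d$. In $\FI$, the map $N \to \Sigma^n N$ for $n \gg 0$ provides exactly this. In $\VI$, iterated naive shifts do not, by the computation above. Producing such a degree-lowering embedding, or some other invariant that drops along the resolution, is precisely the nontrivial input the paper outsources to Nagpal.

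As it stands, your proposal proves only the finite-generation half. That half is all the paper later uses (finite generation of $S(C)$, $S(D)$ and $\rR^1S(D)$ in Lemma~\ref{lem:ext1}), but it is not the full statement.
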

\begin{proof}
    This is \cite[Theorem~5.7]{nag19vi}.
\end{proof}

This next result will not be required to verify Property (A1)--(A11) but will be used to recover some results of Nagpal on Castelnuovo--Mumford regularity.

\begin{theorem}\label{thm:glr}
    Assume $M$ is a finitely generated $\VI$-module such that $\dim_k M(F^n) = P(q^n)$ for $n \gg 0$ where $P \in \bQ[T]$ is a polynomial of degree $d$. Further assume the unit map $M \to ST(M)$ is an isomorphism. Then the Castelnuovo--Mumford regularity of $M$ is $\leq 2 d$.
\end{theorem}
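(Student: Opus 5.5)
We bound the regularity through the local cohomology / derived saturation triangle together with Nagpal's bounds. The hypothesis that $M \to ST(M)$ is an isomorphism says the local cohomology groups $\rR^0\Gamma(M)$ and $\rR^1\Gamma(M)$ vanish, where $\Gamma$ is the torsion functor. The triangle $\rR\Gamma(M) \to M \to \rR S T(M)$ then identifies $\rR^{i+1}\Gamma(M) \cong \rR^i S(T(M))$ for $i \geq 1$. Following Nagpal, the Castelnuovo--Mumford regularity of $M$ is controlled by the degrees of these local cohomology modules: roughly, $\mathrm{reg}(M) \leq \max_i (\deg \rR^i\Gamma(M) + i)$ plus a bounded correction. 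So it suffices to bound $\deg \rR^iS(T(M))$ and the range of $i$ in which it is nonzero, both in terms of $d$.

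The first step is to determine the top index of nonvanishing. I would argue by induction on $d$ using the shift theorem quoted in the proof of Theorem~\ref{thm:embedding}: choose a map $M \to N$ with $N$ semi-induced and torsion kernel. Since $M$ is saturated, the kernel is zero, so we have an injection. Its cokernel $Q$ should have Hilbert polynomial of degree $< d$, because a semi-induced module and $M$ agree in leading term when $N$ is chosen minimally, e.g. as $ST$ of the top piece.

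By Proposition~\ref{prop:inducedsat}, $\rR^iS(T(N)) = 0$ for $i > 0$. Applying $\rR S T$ to $0 \to M \to N \to Q \to 0$ then gives
\[
\rR^iS(T(M)) \cong \rR^{i-1}S(T(Q)) \quad \text{for } i \geq 2,
\]
together with a short exact piece in low degrees. Now $Q$ need not be saturated. I would replace it by $ST(Q)$, whose kernel and cokernel are torsion and so only affect $\rR^0\Gamma$ and $\rR^1\Gamma$ of $Q$, and then apply induction on $d$. By induction, $\rR^jS(T(Q))$ vanishes for $j > d-1$ (suitably indexed) and is generated in degree $\leq 2(d-1)$ plus $O(1)$. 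Shifting $j$ by one and accounting for the degree increment gives the bound $2d$: each inductive step contributes $+1$ from the index shift and $+1$ from the generation-degree growth of the cokernel. The base case $d = 0$ is when $T(M)$ has finite length and $M$ is semi-induced of degree $0$; there regularity $0$ is immediate from Proposition~\ref{prop:inducedsat}.

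The main obstacle is controlling the generation degree of the cokernel $Q$, and of $\rR^0\Gamma(Q)$ and $\rR^1\Gamma(Q)$, linearly in $d$ with slope exactly $2$. This requires an explicit degree bound in the shift theorem: that $N$ can be chosen generated in degree $\leq d$ with $Q$ generated in degree $\leq d+1$. I would first try to extract this from Nagpal's proof of \cite[Theorem~4.38]{nag19vi}, which builds $N$ from iterated shifts $\Sigma M$. Failing that, I would use the finiteness statement Theorem~\ref{thm:finiteness} to fix the vanishing range, and Nagpal's regularity theorem for semi-induced modules to supply the degree bounds.
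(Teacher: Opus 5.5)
There is a genuine gap: your proposal never establishes the sharp slope-$2$ bound, which is the entire content of the statement. The paper does not prove this bound either. It obtains the statement in one line by combining \cite[Theorem~1.1(a)]{gl20bounds} with \cite[Theorem~5.1]{nag19vi}, so the linear estimate with exact constant $2d$ is imported. Your outline amounts to an attempt to reprove those results, and you leave the decisive step as ``the main obstacle.'' Neither fallback closes it. Theorem~\ref{thm:finiteness} gives finite generation and eventual vanishing of $\rR^iS$, but no vanishing range or degree bound in terms of $d$. A regularity bound for the semi-induced module $N$ says nothing about the cokernel $Q$.

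The specific steps fail as follows.

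\emph{The constants.} You write ``$\mathrm{reg}(M)\le\max_i(\deg\rR^i\Gamma(M)+i)$ plus a bounded correction'' and use an inductive bound ``$2(d-1)$ plus $O(1)$.'' Any unspecified additive constant is incompatible with the conclusion $\le 2d$. You need the correction-free inequality, and you need bounds on the top nonvanishing degree of the finite-length torsion modules $\rR^i\Gamma(M)$. Their generation degree is not enough, since the top degree can be larger.

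\emph{The induction is not well-founded.} The statement you induct on concerns regularity of saturated modules. What you actually invoke is a vanishing-range and degree bound for $\rR^jS(T(Q))$ with $Q$ non-saturated, so the inductive statement would have to be strengthened and proved. More seriously, since $N$ is semi-induced, $\rR\Gamma(N)=0$. Hence $\rR^{i+1}\Gamma(M)\cong\rR^i\Gamma(Q)$ for all $i$, and in particular
\[\rR^1S(T(M))\cong\rR^2\Gamma(M)\cong\rR^1\Gamma(Q)=\coker(Q\to ST(Q)).\]
This module depends on $Q$ itself, not only on $T(Q)$. Replacing $Q$ by $ST(Q)$ therefore discards exactly the term controlling the first nonvanishing local cohomology of $M$, and induction applied to $ST(Q)$ gives no information about it.

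\emph{The degree of $Q$.} The shift theorem as quoted in the paper gives only some finitely generated semi-induced $N$ with $M\to N$ having torsion kernel. It gives no degree control on $N$. So your claim that $Q$ has Hilbert polynomial of degree $<d$ is unsupported, and ``choose $N$ minimally, e.g.\ as $ST$ of the top piece'' is not an argument.

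The heuristic ``$+1$ from the index shift, $+1$ from generation-degree growth'' therefore remains a heuristic.
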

\begin{proof}
   This is \cite[Theorem~1.1(a)]{gl20bounds} combined with \cite[Theorem~5.1]{nag19vi}.
\end{proof}

\subsection{Verifying Properties (A1)--(A11)} In this subsection, we explain how to apply the framework from Section~\ref{s:axioms} to $\VI$-modules in non-describing characteristic. 

For $n \geq 0$, let $\cL_n = \Irr(k[\GL_n(F)])$ be the set of irreducible  $k[\GL_n(F)]$-modules. For $\Lambda \in \cL_n$, let $K_{\Lambda} = T(\cI(\Lambda))$ and $I_{\Lambda} = T(\cI(\Theta_{\Lambda}))$ where $\Theta_{\Lambda}$ is the injective envelope of $\Lambda$ in $\Rep_k(\GL_n(F))$. Since $\cI$ is exact, we have $K_{\Lambda} \subset I_{\Lambda}$. By Proposition~\ref{prop:inducedsat}, an object $M$ in $\VIg$ is $K$-filtered if and only if $S(M)$ is semi-induced.

\begin{lemma}\label{lem:gss}
   Let $N$ be an object in $\VIg$ such that $S(N)$ is semi-induced and $M$ a $\VI$-module. We have isomorphisms $\ext^i_{\cB}(M, S(N)) \cong \ext^i_{\VIg}(T(M), N)$ for all $i \geq 0$.
\end{lemma}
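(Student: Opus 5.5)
The plan is to use the Grothendieck spectral sequence for the composite of $T$ with its right adjoint $S$. Because $T$ is exact and $S$ is right adjoint to $T$, we have a natural isomorphism
\[ \Hom_{\cB}(M, S(-)) \cong \Hom_{\VIg}(T(M), -) \]
of functors $\VIg \to \Mod_k$. The functor $S$ sends injectives to injectives, since its left adjoint $T$ is exact. The Grothendieck spectral sequence for the composite $\Hom_{\cB}(M,-)\circ S$ therefore exists:
\[ E_2^{p,q} = \ext^p_{\cB}(M, \rR^q S(N)) \Longrightarrow \ext^{p+q}_{\VIg}(T(M), N). \]

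The key step is to feed in Proposition~\ref{prop:inducedsat}. Write $N = T(S(N))$, which holds because $TS \cong \id_{\VIg}$. Since $S(N)$ is semi-induced, the proposition gives $\rR^q S(N) = \rR^q S(T(S(N))) = 0$ for all $q > 0$. It also gives $ST(S(N)) \cong S(N)$, which is consistent with $\rR^0 S(N) = S(N)$.

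With these vanishings, the spectral sequence is concentrated on the row $q=0$ and collapses. We conclude that
\[ \ext^p_{\cB}(M, S(N)) \cong \ext^p_{\VIg}(T(M), N) \]
for all $p \geq 0$.

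If one prefers to avoid spectral sequences, there is an equivalent direct route. Take an injective resolution $N \to J^\bullet$ in $\VIg$. Then $S(J^\bullet)$ is a complex of injectives in $\cB$, and its cohomology is $\rR^\bullet S(N)$. That cohomology vanishes in positive degrees, so $S(J^\bullet)$ is an injective resolution of $S(N)$. Applying $\Hom_{\cB}(M,-)$ to it and using adjunction termwise gives $\Hom_{\VIg}(T(M), J^\bullet)$, and taking cohomology yields the claim.

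The only point needing care is the identification of $\rR^q S(N)$ with $\rR^q S(T(S(N)))$ via $N \cong TS(N)$, so that the vanishing in Proposition~\ref{prop:inducedsat} applies. I do not expect a real obstacle beyond this.
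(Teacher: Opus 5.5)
Your proposal is correct and matches the paper's proof, which is exactly your ``direct route'': apply $S$ to an injective resolution of $N$, use $\rR^iS(N)=0$ (Proposition~\ref{prop:inducedsat}) and the fact that $S$ preserves injectives to obtain an injective resolution of $S(N)$, then invoke the adjunction $T \dashv S$ termwise. The spectral-sequence version you lead with is the same argument repackaged, and your explicit identification $N \cong TS(N)$ spells out a step the paper leaves implicit.
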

\begin{proof}
Let $N \to I^{\bullet}$ be an injective resolution. Applying $S$, we obtain a complex $S(N) \to S(I^{\bullet})$ which is exact as $\rR^iS(N)=0$ by Proposition~\ref{prop:inducedsat}. Furthermore, the functor $S$ takes injectives to injectives being right adjoint to the exact functor $T$. Thus, to compute $\ext^i_{\cB}(M, S(N))$, we may apply the functor $\Hom_{\cB}(M, -)$ to the complex $S(I^{\bullet})$ and take homology. However, using the adjunction $T \dashv S$, this is the same as applying $\Hom_{\VIg}(T(M), -)$ to $I^{\bullet}$ and taking homology. This latter procedure computes $\ext^i_{\VIg}(T(M), N)$. The result follows.
\end{proof}

\begin{corollary}\label{cor:adjunction}
Let $W$ be a $k[\GL_n(F)]$-module and $N$ a semi-induced $\VI$-module. For $i \geq 0$, we have isomorphisms
  \[\ext^i_{\VIg}(T(\cI(W)), T(N)) \cong \ext^i_{k[\GL_n(F)]}(W, N(F^n)).\] 
\end{corollary}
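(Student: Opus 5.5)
We combine Lemma~\ref{lem:gss} with the adjunction between induction and restriction. Since $N$ is semi-induced, Proposition~\ref{prop:inducedsat} gives that the unit $N \to ST(N)$ is an isomorphism, so $S(T(N)) \cong N$ is semi-induced. Applying Lemma~\ref{lem:gss} to the object $T(N)$ of $\VIg$ and the $\VI$-module $M = \cI(W)$ then yields
\[
\ext^i_{\VIg}(T(\cI(W)), T(N)) \cong \ext^i_{\cB}(\cI(W), S(T(N))) \cong \ext^i_{\cB}(\cI(W), N)
\]
for all $i \geq 0$.

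It therefore remains to identify $\ext^i_{\cB}(\cI(W), N)$ with $\ext^i_{k[\GL_n(F)]}(W, N(F^n))$. Here we regard $W$ as a $\VB$-module supported on the isomorphism class of $F^n$, so that $\cI(W)$ makes sense. The functor $\cI$ is left adjoint to the forgetful functor $\Phi \colon \cB \to \VBm$. Moreover, $\cI$ is exact, as noted in the paper, and $\Phi$ is exact. An exact left adjoint forces its right adjoint to preserve injectives, so $\Phi$ preserves injectives. Hence for an injective resolution $N \to J^\bullet$ in $\cB$, the complex $\Phi(N) \to \Phi(J^\bullet)$ is an injective resolution in $\VBm$. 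The adjunction gives $\Hom_{\cB}(\cI(W), J^\bullet) \cong \Hom_{\VBm}(W, \Phi(J^\bullet))$, and taking cohomology yields $\ext^i_{\cB}(\cI(W), N) \cong \ext^i_{\VBm}(W, \Phi(N))$.

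Finally, $\VBm$ is the product over $m \geq 0$ of the categories $\Rep_k(\GL_m(F))$, and $W$ is concentrated in the factor $m = n$. Ext groups in a product category are computed factorwise, so $\ext^i_{\VBm}(W, \Phi(N)) \cong \ext^i_{k[\GL_n(F)]}(W, N(F^n))$. Composing the three isomorphisms proves the claim.

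There is no serious obstacle. The only points requiring care are to check that $\cI$ is exact and that $T(N)$ satisfies the hypothesis of Lemma~\ref{lem:gss}. Exactness of $\cI$ holds because it is given by $\cI(W)(V) = k[\mathrm{Inj}(F^n, V)] \otimes_{k[\GL_n(F)]} W$, and $k[\mathrm{Inj}(F^n, V)]$ is a free right $k[\GL_n(F)]$-module since $\GL_n(F)$ acts freely on injections. The hypothesis of Lemma~\ref{lem:gss} holds by the first part of Proposition~\ref{prop:inducedsat}.
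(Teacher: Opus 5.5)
Your proposal is correct and follows the paper's proof. Lemma~\ref{lem:gss} together with Proposition~\ref{prop:inducedsat} identifies the left side with $\ext^i_{\cB}(\cI(W), N)$, and the adjunction between $\cI$ and the exact forgetful functor identifies this with $\ext^i_{k[\GL_n(F)]}(W, N(F^n))$. Your explicit remark that exactness of $\cI$ makes the forgetful functor preserve injectives is needed alongside exactness of the forgetful functor, and it fills in a step the paper leaves implicit.
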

\begin{proof}
We have a natural isomorphism $\Hom_{\cB}(\cI(W), N) \cong \Hom_{\VBm}(W, N)$ and since the forgetful functor is exact, this induces isomorphisms $\ext^i_{\cB}(\cI(W), N) \cong \ext^i_{\VBm}(W, N) =\ext^i_{k[\GL_n(F)]}(W, N(F^n))$. 
Combining Lemma~\ref{lem:gss} with Proposition~\ref{prop:inducedsat}, we also get isomorphisms $\ext^i_{\cB}(\cI(W), N) \cong \ext^i_{\VIg}(T(\cI(W)), T(N))$ for all $i \geq 0$.
The result follows.
\end{proof}

\begin{corollary}\label{cor:extvanishing}
    Let $\Lambda, \Gamma \in \cL$ satisfy $n = |\Lambda| < |\Gamma|$. For all $i\geq 0$, we have
    \[\ext^i_{\VIg}(K_{\Lambda}, K_{\Gamma}) = 0.\] 
\end{corollary}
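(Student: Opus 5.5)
We reduce to a computation with $\GL_n(F)$-representations via Corollary~\ref{cor:adjunction}. Write $\Lambda \in \cL_n$ and $\Gamma \in \cL_m$ with $n < m$. We have $K_{\Lambda} = T(\cI(\Lambda))$ and $K_{\Gamma} = T(\cI(\Gamma))$. The $\VI$-module $\cI(\Gamma)$ is induced, hence semi-induced. Applying Corollary~\ref{cor:adjunction} with $W = \Lambda$ and $N = \cI(\Gamma)$ therefore gives
\[\ext^i_{\VIg}(K_{\Lambda}, K_{\Gamma}) \cong \ext^i_{k[\GL_n(F)]}(\Lambda, \cI(\Gamma)(F^n))\]
for all $i \geq 0$.

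It remains to evaluate $\cI(\Gamma)(F^n)$. The functor $\cI$ is the left adjoint of the forgetful functor, i.e.\ left Kan extension along $\VB \to \VI$. For a $\VB$-module concentrated in degree $m$ with value $\Gamma$, this gives
\[\cI(\Gamma)(V) = k[\Hom_{\VI}(F^m, V)] \otimes_{k[\GL_m(F)]} \Gamma.\]
When $\dim V = n < m$ there are no injections $F^m \to V$, so $\cI(\Gamma)(F^n) = 0$. Hence every $\ext^i_{k[\GL_n(F)]}(\Lambda, 0)$ vanishes, which proves the claim.

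The only point needing care is that Corollary~\ref{cor:adjunction} applies: its hypothesis is that $N$ be semi-induced, which $\cI(\Gamma)$ is since it is finitely generated ($\Gamma$ is finite-dimensional) and lies in the essential image of $\cI$. No other obstacle is expected. Note also that $\cI(W)$ for $W$ a $k[\GL_n(F)]$-module means $\cI$ applied to $W$ viewed as a $\VB$-module supported in degree $n$, and this is how $K_{\Lambda}$ is defined, so the corollary applies in the stated form.
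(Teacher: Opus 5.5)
Your proposal is correct and follows the paper's proof exactly: apply Corollary~\ref{cor:adjunction} with $W = \Lambda$ and $N = \cI(\Gamma)$, then note $\cI(\Gamma)(F^n) = 0$ because there are no injections $F^m \to F^n$ when $m > n$. Your explicit Kan extension formula just spells out the paper's phrase ``for degree reasons''.
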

\begin{proof}
By Corollary~\ref{cor:adjunction}, we have $\ext^i_{\VIg}(K_\Lambda, K_\Gamma) \cong \ext^i_{k[\GL_n(F)]}(\Lambda, \cI(\Gamma)(F^n))$. But $\cI(\Gamma)(F^n) = 0$ for degree reasons.
\end{proof}

\begin{corollary}\label{cor:extvanishing2}
    Let $\Lambda, \Gamma \in \cL$ satisfy $n = |\Lambda| \leq |\Gamma|$. For all $i > 0$, we have
    \[\ext^i_{\VIg}(K_{\Lambda}, I_{\Gamma}) = 0.\] 
\end{corollary}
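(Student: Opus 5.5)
We argue exactly as in Corollary~\ref{cor:extvanishing}, the only new input being a vanishing statement for the injective envelope $\Theta_{\Gamma}$. Write $m = |\Gamma| \geq n$. Since $\cI$ is exact and $\Theta_{\Gamma}$ is a finite-dimensional $k[\GL_m(F)]$-module, the $\VI$-module $\cI(\Theta_{\Gamma})$ is finitely generated and induced, in particular semi-induced. Corollary~\ref{cor:adjunction}, applied with $W = \Lambda$ and $N = \cI(\Theta_{\Gamma})$, therefore gives
\[
\ext^i_{\VIg}(K_{\Lambda}, I_{\Gamma}) \cong \ext^i_{k[\GL_n(F)]}\bigl(\Lambda, \cI(\Theta_{\Gamma})(F^n)\bigr)
\]
for all $i \geq 0$. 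It thus suffices to show that $\cI(\Theta_{\Gamma})(F^n)$ is an injective $k[\GL_n(F)]$-module.

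We split into the cases $n < m$ and $n = m$.

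\emph{Case $n < m$.} Here $\cI(\Theta_{\Gamma})(F^n) = 0$ for degree reasons, since $\cI(\Theta_\Gamma)$ is generated in degree $m$. The right-hand side therefore vanishes for every $i$, as in Corollary~\ref{cor:extvanishing}.

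\emph{Case $n = m$.} We identify $\cI(\Theta_{\Gamma})(F^m)$. The left adjoint $\cI$ sends a $k[\GL_m(F)]$-module $W$ to the $\VI$-module $V \mapsto k[\Hom_{\VI}(F^m, V)] \otimes_{k[\GL_m(F)]} W$. At $V = F^m$ we have $\Hom_{\VI}(F^m, F^m) = \GL_m(F)$, so
\[
\cI(\Theta_{\Gamma})(F^m) \cong k[\GL_m(F)] \otimes_{k[\GL_m(F)]} \Theta_{\Gamma} \cong \Theta_{\Gamma}.
\]
This is injective in $\Rep_k(\GL_m(F))$ by definition, so $\ext^i_{k[\GL_m(F)]}(\Lambda, \Theta_{\Gamma}) = 0$ for $i > 0$.

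The only point requiring care is the hypothesis of Corollary~\ref{cor:adjunction}. It needs $N = \cI(\Theta_\Gamma)$ to be semi-induced, so that $S(I_\Gamma) \cong \cI(\Theta_\Gamma)$ with vanishing higher $\rR^iS$. This holds by Proposition~\ref{prop:inducedsat}, because $\cI(\Theta_{\Gamma})$ is finitely generated and lies in the essential image of $\cI$, hence is induced. Beyond this and the evaluation of $\cI$ in its generating degree, no obstacle is expected. Note also that in non-describing characteristic $k[\GL_m(F)]$ need not be semisimple, which is why $\Theta_\Gamma$ rather than $\Gamma$ is used to define $I_\Gamma$.
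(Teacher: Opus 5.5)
Your proposal is correct and is essentially the paper's proof: apply Corollary~\ref{cor:adjunction} with $W=\Lambda$ and $N=\cI(\Theta_\Gamma)$, then note that $\cI(\Theta_\Gamma)(F^n)$ vanishes when $|\Gamma|>|\Lambda|$ and is the injective module $\Theta_\Gamma$ when $|\Gamma|=|\Lambda|$. Your added checks, that $\cI(\Theta_\Gamma)$ is induced and hence semi-induced, and the explicit evaluation $\cI(\Theta_\Gamma)(F^m)\cong\Theta_\Gamma$, simply fill in steps the paper leaves implicit.
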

\begin{proof}
By Corollary~\ref{cor:adjunction}, we have $\ext^i_{\VIg}(K_\Lambda, I_\Gamma) \cong \ext^i_{k[\GL_n(F)]}(\Lambda, \cI(\Theta_\Gamma)(F^n))$. The higher Exts vanish because $\cI(\Theta_\Gamma)(F^n) = 0$ when $|\Gamma| > |\Lambda|$ and is an injective $k[\GL_n(F)]$-module when $|\Gamma| = |\Lambda|$.
\end{proof}

\begin{lemma}\label{lem:ext1}
    For finitely generated $C$ and $D$ in $\VIg$, we have 
    \[\dim_k(\Hom_{\VIg}(C, D)) < \infty\] 
    and
    \[\dim_k(\ext^1_{\VIg}(C, D)) < \infty.\]
\end{lemma}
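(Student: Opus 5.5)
The plan is to reduce both finiteness statements to computations in $\cB$ using the adjunction $T \dashv S$ and Nagpal's finiteness theorem. Write $C = T(M)$ and $D = T(N)$ with $M, N$ finitely generated $\VI$-modules; every finitely generated object of $\VIg$ has this form, since $T$ is exact and essentially surjective.

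\textbf{Step 1: reduce to semi-induced targets.} By the embedding theorem (Theorem~\ref{thm:embedding}), $D \hookrightarrow T(P)$ for a finitely generated semi-induced $P$. Let $Q = T(P)/D$; this is finitely generated, being a quotient of $T(P)$. Applying $\Hom_{\VIg}(C,-)$ to $0 \to D \to T(P) \to Q \to 0$ gives the exact segment
\[
0 \to \Hom(C,D) \to \Hom(C,T(P)) \to \Hom(C,Q) \to \ext^1(C,D) \to \ext^1(C,T(P)).
\]
From this, $\dim \Hom(C,D) \le \dim \Hom(C,T(P))$ and $\dim \ext^1(C,D) \le \dim \Hom(C,Q) + \dim \ext^1(C,T(P))$. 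So it suffices to prove two things: $\Hom(C,-)$ is finite-dimensional on every finitely generated target, and $\ext^1(C,-)$ and $\Hom(C,-)$ are finite-dimensional on semi-induced targets $T(P)$. The first of these is handled by the same argument as Step 2, since the Hom bound only needed the embedding into $T(P)$.

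\textbf{Step 2: semi-induced targets.} Since $P$ is semi-induced, $S(T(P)) \cong P$ by Proposition~\ref{prop:inducedsat}, so $S(T(P))$ is semi-induced. Lemma~\ref{lem:gss} then gives
\[
\ext^i_{\VIg}(T(M), T(P)) \cong \ext^i_{\cB}(M, P)
\]
for all $i \geq 0$. The right-hand side is finite-dimensional by Lemma~\ref{lem:fext}, because $M$ and $P$ are finitely generated. This handles $i=0,1$ for semi-induced targets.

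Combining the two steps, $\Hom(C,D)$ is finite-dimensional, which gives finiteness of $\Hom(C,Q)$ as well. Together with finiteness of $\ext^1(C,T(P))$, this bounds $\ext^1(C,D)$.

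\textbf{Main obstacle.} The only delicate points are that $Q$ remains finitely generated in $\VIg$, which is clear as a quotient of a finitely generated object, and that the Hom bound is applied to $Q$ before the $\ext^1$ bound is concluded, so the argument is not circular. Theorem~\ref{thm:finiteness} is not needed.
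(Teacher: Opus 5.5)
Your proof is correct, but it takes a different route from the paper's.

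The paper sets $M = S(C)$ and $N = S(D)$. It gets $\Hom_{\VIg}(C,D) \cong \Hom_{\cB}(S(C),S(D))$ from the adjunction. It then bounds $\ext^1_{\VIg}(C,D)$ by the low-degree exact sequence $\ext^1_{\cB}(S(C),S(D)) \to \ext^1_{\VIg}(C,D) \to \Hom_{\cB}(S(C),\rR^1S(D))$ coming from the Grothendieck spectral sequence for $\Hom_{\cB}(S(C),S(-))$. This requires $S(C)$, $S(D)$ and $\rR^1S(D)$ to be finitely generated, which is exactly Nagpal's finiteness theorem (Theorem~\ref{thm:finiteness}).

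You instead dimension-shift along an embedding $D \hookrightarrow T(P)$ from Theorem~\ref{thm:embedding}. Because $S(T(P)) \cong P$ is semi-induced, Lemma~\ref{lem:gss} moves every Ext group into $T(P)$ into $\cB$, where Lemma~\ref{lem:fext} applies. You correctly get the Hom bound for arbitrary finitely generated targets before using it on $Q$, so there is no circularity.

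\textbf{What your route buys.} Your inputs are Theorem~\ref{thm:embedding}, the first half of Proposition~\ref{prop:inducedsat}, Lemma~\ref{lem:gss} and Lemma~\ref{lem:fext}. The paper needs all of these anyway for (A3), (A4), (A6) and (A10). So your route makes Theorem~\ref{thm:finiteness} unnecessary for the whole verification in Section~\ref{s:vi}, since it is used only for (A2). The same applies to Theorem~\ref{thms}(e) in Section~\ref{s:gl}.

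Your argument is also closer in spirit to the axiomatic framework. It shows that (A2) follows from (A10) together with finite-dimensionality of Ext into $K$-filtered objects. Iterating the shift by induction on $i$, with $Q$ finitely generated, gives finiteness of $\ext^i_{\VIg}(C,D)$ for every $i$.

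\textbf{What the paper's route buys.} It is shorter once Theorem~\ref{thm:finiteness} is available, and it does not use the embedding theorem.

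One small inconsistency: your opening sentence says you will use ``Nagpal's finiteness theorem'', but your argument never does, as you note yourself at the end.
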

\begin{proof}
Let $M = S(C)$ and $N = S(D)$, so that $T(M) \cong C$ and $T(N) \cong D$. Since $C$ and $D$ are finitely generated, the $\VI$-modules $M$ and $N$ are finitely generated. Furthermore, we have isomorphisms $\Hom_{\cB}(M, N) \cong \Hom_{\VIg}(C, D)$. The first part now follows by Lemma~\ref{lem:fext}.

For the second part, consider the two functors $S \colon \VIg \to \cB$ and $\Hom_{\cB}(M,-) \colon \cB \to \Mod_k$. Their composition is the functor $\Hom_{\cB}(M, S(-))$ which is naturally isomorphic to the functor $\Hom_{\VIg}(C, -)$ since $T \dashv S$. So for $i \geq 0$, we have natural isomorphisms
\[\rR^i (\Hom_{\cB}(M,S(-))) \cong \ext^i_{\VIg}(C, -).\]
Being right adjoint to the exact functor $T$, the functor $S$ takes injectives in $\VIg$ to injectives in $\cB$. So the Grothendieck spectral sequence yields the exact sequence
   \[
   \ext^1_{\cB}(M, S(D)) \to \ext^1_{\VIg}(C, D) \to \Hom_{\cB}(M, \rR^1S(D)).
   \] 
 The $\VI$-modules $M, S(D)$, and $\rR^1S(D)$ are all finitely generated by Theorem~\ref{thm:finiteness}, so by Lemma~\ref{lem:fext}, the outermost vector spaces in the above sequence are finite-dimensional and so the middle vector space is also finite-dimensional, as required.
\end{proof}

\begin{lemma}\label{lem:qpol}
    For $\Lambda \in \cL_d$, there exists a polynomial $P \in \bQ[T]$ of degree $d$ such that $\dim_k(\cI(\Lambda)(F^n)) = P(q^n)$ for $n \gg 0$. 
\end{lemma}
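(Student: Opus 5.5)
We compute the dimension directly from the definition of the induction functor. Since $\cI$ is left adjoint to the forgetful functor $\cB \to \VBm$, for a $k[\GL_d(F)]$-module $\Lambda$ (viewed as a $\VB$-module supported in dimension $d$) we have
\[
\cI(\Lambda)(F^n) \cong k[\Hom_{\VI}(F^d, F^n)] \otimes_{k[\GL_d(F)]} \Lambda .
\]
The set $\Hom_{\VI}(F^d, F^n)$ of injective linear maps carries a free right action of $\GL_d(F)$ by precomposition. So $k[\Hom_{\VI}(F^d,F^n)]$ is a free right $k[\GL_d(F)]$-module whose rank is the number of orbits, namely the number of $d$-dimensional subspaces of $F^n$. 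Tensoring over $k[\GL_d(F)]$ then gives
\[
\dim_k \cI(\Lambda)(F^n) = \dim_k(\Lambda)\cdot \binom{n}{d}_q ,
\]
where $\binom{n}{d}_q$ is the Gaussian binomial coefficient. No assumption on the characteristic of $k$ is needed here, since freeness of the action gives freeness of the module.

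It remains to check that $\binom{n}{d}_q$ is a polynomial of degree $d$ in $q^n$ with rational coefficients. Write
\[
\binom{n}{d}_q = \prod_{i=0}^{d-1} \frac{q^{n}-q^{i}}{q^{d}-q^{i}} ,
\]
which is valid for all $n \geq d$. Setting
\[
P(T) = \dim_k(\Lambda)\prod_{i=0}^{d-1}\frac{T-q^i}{q^d-q^i} \in \bQ[T],
\]
we get $\dim_k \cI(\Lambda)(F^n) = P(q^n)$ for all $n \geq d$. The polynomial $P$ has degree exactly $d$: its leading coefficient $\dim_k(\Lambda)/\prod_i (q^d-q^i)$ is nonzero because $\Lambda$ is irreducible and hence nonzero.

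The only point needing care is the identification of $\cI(\Lambda)(F^n)$ with the tensor product above. This is the standard formula for left Kan extension along $\VB \to \VI$, where the only object of $\VB$ contributing is $F^d$. The rest is elementary counting.
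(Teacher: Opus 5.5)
Your proposal is correct and follows essentially the same argument as the paper: you identify $\cI(\Lambda)(F^n)$ with $k[\Hom_{\VI}(F^d,F^n)] \otimes_{k[\GL_d(F)]} \Lambda$ and count, obtaining $\dim_k \Lambda \cdot \prod_{i=0}^{d-1} \frac{q^n-q^i}{q^d-q^i}$, which is a polynomial of degree $d$ in $q^n$. Your extra remarks, that the $\GL_d(F)$-action is free so the count is characteristic-independent and that the leading coefficient is nonzero, simply make explicit what the paper leaves implicit.
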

\begin{proof}
   For $n \geq d$, we have $\cI(\Lambda)(F^n) = k[\Hom_{\VI}(F^d, F^n)] \otimes_{k[\GL_d(F)]} \Lambda$ which has dimension 
   \[\frac{(q^n-1)(q^n-q)\cdots (q^n - q^{d-1})}{(q^d-1)(q^d - q) \cdots (q^d - q^{d-1})} \dim_k \Lambda,\] from which the result follows.
\end{proof}
For a $\VB$-module $W$, we let $\max\deg W$ be the infimum of the set of integers $n$ such that $W_{n+i} = 0$ for all positive integers $i$. 
\begin{corollary}\label{cor:qpol2}
    For a nonzero finite-length $\VB$-module $W$, there exists a polynomial $P \in \bQ[T]$ of degree $\max \deg(W)$ such that $\dim_k(\cI(W)(F^n)) = P(q^n)$ for $n \gg 0$. 
\end{corollary}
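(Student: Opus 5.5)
Since $\cI$ is exact and $W$ is a finite-length $\VB$-module, I will argue by induction on the length of $W$ using additivity of dimension. A finite-length $\VB$-module has finitely many nonzero graded pieces, each a finite-dimensional $k[\GL_n(F)]$-module. A composition series therefore has successive quotients of the form $\Lambda$ with $\Lambda \in \cL_m$, viewed as a $\VB$-module concentrated in degree $m$. For every such factor we have $m \leq \max\deg(W)$, and at least one factor has $m = \max\deg(W)$, namely a simple constituent of the top nonzero piece $W_{\max\deg W}$. The latter exists because $W \neq 0$.

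Exactness of $\cI$ gives, for each $n$,
\[\dim_k \cI(W)(F^n) = \sum_{j} \dim_k \cI(\Lambda_j)(F^n),\]
where $\Lambda_j$ runs over the composition factors with multiplicity. By Lemma~\ref{lem:qpol}, each summand equals $P_j(q^n)$ for $n \gg 0$, with $P_j \in \bQ[T]$ of degree $|\Lambda_j|$. Since there are finitely many summands, we may take $n$ large enough to work for all of them simultaneously. Then $P = \sum_j P_j$ satisfies $\dim_k \cI(W)(F^n) = P(q^n)$ for $n \gg 0$, and $\deg P \leq \max\deg(W)$.

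The only point needing care is that the top-degree terms do not cancel. The explicit formula in the proof of Lemma~\ref{lem:qpol} shows this. For $\Lambda \in \cL_d$ the leading coefficient of $P_\Lambda$ is
\[\frac{\dim_k \Lambda}{(q^d-1)(q^d-q)\cdots(q^d-q^{d-1})} > 0,\]
since the numerator $(T-1)(T-q)\cdots(T-q^{d-1})$ is monic of degree $d$. All coefficients of $T^{\max\deg W}$ are therefore positive, and at least one factor contributes to that degree. Their sum is thus nonzero, so $\deg P = \max\deg(W)$.

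I do not expect a real obstacle. The only care needed is in using positivity of the leading coefficients rather than just the degrees from Lemma~\ref{lem:qpol}, and in noting that $\max\deg(W)$ is finite and attained because $W$ is nonzero of finite length.
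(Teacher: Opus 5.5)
Your proof is correct and takes the same route as the paper, which just says the result follows from Lemma~\ref{lem:qpol} and induction on the length of $W$. Summing over the composition factors using exactness of $\cI$ is that induction written out, and your remark that each leading coefficient $\dim_k \Lambda/|\GL_d(F)|$ is positive, so the top-degree terms cannot cancel, is exactly the detail the paper leaves implicit.
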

\begin{proof}
    This is clear by using Lemma~\ref{lem:qpol} and induction on the length of $W$.
\end{proof}

\begin{corollary}\label{cor:qpol}
    Assume $C$ is a gr-subK object of degree $\leq d$ in $\VIg$. There exists a polynomial $P \in \bQ[T]$ of degree $\leq d$ such that $\dim_k(S(C)(F^n)) \leq P(q^n)$ for $n \gg 0$.
\end{corollary}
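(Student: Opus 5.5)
The plan is to induct on the length $t$ of a filtration $0 = C^0 \subset \cdots \subset C^t = C$ witnessing that $C$ is gr-subK of degree $\leq d$. Each subquotient injects, $C^i/C^{i-1} \hookrightarrow D^i$, with $D^i$ $K$-filtered of degree $\leq d$. The argument has two ingredients: a bound for each $D^i$, and a way to propagate bounds through subobjects and extensions under the left exact functor $S$.

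First, take $D$ $K$-filtered of degree $\leq d$. By Proposition~\ref{prop:inducedsat}, $S(D)$ is semi-induced. Since $\rR^1 S$ vanishes on the pieces, $S$ applied to the filtration of $D$ gives a filtration of $S(D)$ with subquotients $S(K_{\Lambda(i)}) \cong \cI(\Lambda(i))$, where $|\Lambda(i)| \leq d$. Hence $\dim_k S(D)(F^n) = \sum_i \dim_k \cI(\Lambda(i))(F^n)$. By Lemma~\ref{lem:qpol}, each summand equals a polynomial of degree $|\Lambda(i)| \leq d$ in $q^n$ for $n \gg 0$. So $\dim_k S(D)(F^n) = P_D(q^n)$ with $\deg P_D \leq d$.

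Second, $S$ is a right adjoint, hence left exact. An injection $C^i/C^{i-1} \hookrightarrow D^i$ therefore gives $S(C^i/C^{i-1}) \hookrightarrow S(D^i)$, and so $\dim_k S(C^i/C^{i-1})(F^n) \leq P_{D^i}(q^n)$ for $n \gg 0$. For the extension step, apply $S$ to $0 \to C^{i-1} \to C^i \to C^i/C^{i-1} \to 0$. Left exactness gives the exact sequence $0 \to S(C^{i-1}) \to S(C^i) \to S(C^i/C^{i-1})$. Evaluating at $F^n$ yields
\[
\dim_k S(C^i)(F^n) \leq \dim_k S(C^{i-1})(F^n) + \dim_k S(C^i/C^{i-1})(F^n).
\]
Induction on $i$ then gives $\dim_k S(C)(F^n) \leq \sum_i P_{D^i}(q^n)$ for $n \gg 0$. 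The polynomial $P = \sum_i P_{D^i}$ has degree $\leq d$.

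I do not expect a genuine obstacle. The only point needing care is the computation of $\dim S(D)$ for $K$-filtered $D$. There we must ensure that $S$ preserves exactness of the filtration sequences, which holds because $\rR^1 S(K_\Lambda) = 0$ by Proposition~\ref{prop:inducedsat}. Alternatively, since the final bound is only an inequality, left exactness of $S$ alone already gives an upper bound $\dim S(D)(F^n) \leq \sum_i \dim \cI(\Lambda(i))(F^n)$. Either route suffices, and the second avoids the vanishing of $\rR^1 S$ entirely.
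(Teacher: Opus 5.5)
Your proposal is correct and follows essentially the same route as the paper: left exactness of $S$ along the gr-subK filtration, the identification $S(K_\Lambda) \cong \cI(\Lambda)$ from Proposition~\ref{prop:inducedsat}, and the dimension count of Lemma~\ref{lem:qpol}, summed over the finitely many pieces. The only difference is that the paper lets each subquotient embed directly into a single $K_{\Lambda(i)}$, implicitly refining the filtration, whereas you bound $\dim_k S(D^i)(F^n)$ for a general $K$-filtered $D^i$. Your version matches the definition of gr-subK more literally, and, as you note, left exactness alone suffices for that bound.
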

\begin{proof}
Let $0 = C^0 \subset C^1 \subset \ldots \subset C^t = C$ be a filtration, with injections $\iota_i \colon C^{i+1}/C^i \hookrightarrow K_{\Lambda(i)}$ where each $\Lambda(i) \in \cL_{\leq d}$.

Fix $i < t$. Applying the left exact functor $S$ to $\iota_i$ gives an injection $S(C^{i+1}/C^i) \hookrightarrow S(K_{\Lambda(i)})$. Since $\cI(\Lambda(i))$ is induced, we have $S(K_{{\Lambda}(i)}) \cong \cI({\Lambda}(i))$. By Corollary~\ref{cor:qpol2}, there exists a polynomial $Q_i \in \bQ[T]$ of degree $\leq d$ such that $\dim_k(\cI({\Lambda}(i))(F^n)) = Q_i(q^n)$ for $n \gg 0$. In particular, we have
\[
  \dim_k S(C^{i+1}/C^i)(F^n) \leq \dim_k \cI({\Lambda}(i))(F^n) = Q_i(q^n).
\]

Now applying $S$ to $0 \to C^i \to C^{i+1} \to C^{i+1}/C^i \to 0$ and evaluating at $F^n$ yields the exact sequence
\[
  0 \to S(C^i)(F^n) \to S(C^{i+1})(F^n) \to S(C^{i+1}/C^i)(F^n),
\]
whence $\dim_k S(C^{i+1})(F^n) \leq \dim_k S(C^i)(F^n) + \dim_k S(C^{i+1}/C^i)(F^n)$. As $S(C^0) = S(0) = 0$, induction on $i$ together with the above inequality yields
\[
  \dim_k S(C)(F^n)
  \leq \sum_{i=0}^{t-1} \dim_k S(C^{i+1}/C^i)(F^n)
  \leq \sum_{i=0}^{t-1} Q_i(q^n) = P(q^n),
\]
where $P := \sum_{i=0}^{t-1} Q_i \in \bQ[T]$ has degree $\leq d$.
\end{proof}

\begin{lemma}\label{lem:noinjective}
    Assume $C$ is a gr-subK object of degree $<d$ in $\VIg$. For $\Lambda \in \cL_d$, there are no injective maps $K_{\Lambda} \to C$.
\end{lemma}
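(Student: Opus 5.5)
Suppose for contradiction that $\iota \colon K_{\Lambda} \to C$ is injective. The plan is to compare growth rates after applying the saturation functor $S$. Since $S$ is right adjoint to the exact functor $T$, it is left exact. Applying it to $\iota$ therefore gives an injection
\[ S(K_{\Lambda}) \hookrightarrow S(C). \]
We have $S(K_{\Lambda}) = S(T(\cI(\Lambda))) \cong \cI(\Lambda)$, because $\cI(\Lambda)$ is induced and hence saturated by Proposition~\ref{prop:inducedsat}. So for all $n$,
\[ \dim_k \cI(\Lambda)(F^n) \leq \dim_k S(C)(F^n). \]

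Next we bound both sides. By Lemma~\ref{lem:qpol}, the left side equals $P(q^n)$ for $n \gg 0$, where $P \in \bQ[T]$ has degree exactly $d$. Its leading coefficient is $\dim_k \Lambda / |\GL_d(F)| > 0$, since the $n$-dependent numerator $(q^n-1)\cdots(q^n-q^{d-1})$ is a product of $d$ factors of the form $q^n - q^i$.

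By Corollary~\ref{cor:qpol}, applied with $d-1$ in place of $d$ (this is legitimate since $C$ is gr-subK of degree $< d$, i.e.\ $\leq d-1$), there is a polynomial $Q$ of degree $\leq d-1$ with $\dim_k S(C)(F^n) \leq Q(q^n)$ for $n \gg 0$.

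Combining the two bounds gives $P(q^n) \leq Q(q^n)$ for all $n \gg 0$. But $P - Q$ has degree $d$ with positive leading coefficient, and $q^n \to \infty$, so $P(q^n) - Q(q^n) \to +\infty$. This is a contradiction, so no injective map $K_{\Lambda} \to C$ exists.

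The only delicate points are the identification $S(K_\Lambda)\cong\cI(\Lambda)$ and the use of left exactness of $S$ to transport injectivity. Both are immediate from the results already recorded, so I expect no genuine obstacle. One more edge case: when $d = 0$, $C$ is gr-subK of degree $<0$, which forces $C = 0$, while $K_\Lambda \neq 0$; so the claim is trivial there, and it is also covered by the argument above with $Q = 0$.
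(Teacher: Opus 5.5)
Your proof is correct and follows essentially the same argument as the paper. Both apply $S$, use $S(K_\Lambda)\cong\cI(\Lambda)$ together with left exactness of $S$, and then compare the exact degree-$d$ growth from Lemma~\ref{lem:qpol} with the degree-$\leq d-1$ upper bound from Corollary~\ref{cor:qpol}; your remarks on the positive leading coefficient and the $d=0$ case spell out details the paper leaves implicit.
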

\begin{proof}
    By applying $S$ to a map $\phi \colon K_{\Lambda} \to C$, we obtain a map $S(\phi) \colon S(K_{\Lambda}) \cong \cI(\Lambda) \to S(C)$. By Lemma~\ref{lem:qpol}, there exists a polynomial $P_1$ of degree $d$ such that $\dim_k(\cI(\Lambda)(F^n)) = P_1(q^n)$ for $n \gg 0$, and similarly by Corollary~\ref{cor:qpol}, there exists a polynomial $P_2$ of degree $< d$ such that $\dim_k(S(C)(F^n)) \le P_2(q^n)$ for all $n \gg 0$. Thus $S(\phi)$ cannot be injective and since $S$ is left exact, neither can $\phi$.
\end{proof}

\begin{lemma}\label{lem:nononinj}
    Let $\phi\colon K_{\Lambda} \to I_{\Gamma}$ be a nonzero map with $|\Gamma| = |\Lambda|$. Then (1) $\Gamma = \Lambda$ and (2) $\phi$ is injective.
\end{lemma}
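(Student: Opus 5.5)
Set $n = |\Lambda| = |\Gamma|$. The plan is to transport the problem to the finite group $\GL_n(F)$ with Corollary~\ref{cor:adjunction} (case $i=0$) and then work inside $\cB$ using $S$. Since $\cI(\Theta_\Gamma)$ is induced, hence semi-induced, Corollary~\ref{cor:adjunction} gives
\[\Hom_{\VIg}(K_\Lambda, I_\Gamma) \cong \Hom_{k[\GL_n(F)]}(\Lambda, \cI(\Theta_\Gamma)(F^n)).\]
Because $|\Gamma| = n$, we have $\cI(\Theta_\Gamma)(F^n) = k[\Hom_{\VI}(F^n,F^n)]\otimes_{k[\GL_n(F)]}\Theta_\Gamma \cong \Theta_\Gamma$. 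So $\Hom_{\VIg}(K_\Lambda, I_\Gamma)\cong \Hom_{k[\GL_n(F)]}(\Lambda,\Theta_\Gamma)$.

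For part (1), recall that $\Theta_\Gamma$ is the injective envelope of the simple module $\Gamma$, so its socle is $\Gamma$. Since $\Lambda$ is simple, any nonzero map $\Lambda\to\Theta_\Gamma$ is injective and lands in the socle. Hence a nonzero $\phi$ forces $\Lambda\cong\Gamma$, and so $\Lambda = \Gamma$ as elements of $\cL_n$.

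For part (2), I would apply $S$. Under the adjunction, $\phi$ corresponds to a map $\cI(\Lambda)\to S(I_\Gamma) = \cI(\Theta_\Gamma)$, using Proposition~\ref{prop:inducedsat} for the identification. By adjunction this map is $\cI(f)$, where $f\colon \Lambda\to\Theta_\Gamma$ is the restriction to degree $n$, which is injective by the socle argument. Since $\cI$ is exact, $\cI(f)$ is injective, and applying the exact functor $T$ (using $TS\cong \id$) shows that $\phi = T(\cI(f))$ is injective.

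The main point to check is the identification of the map $\cI(\Lambda)\to\cI(\Theta_\Gamma)$ with $\cI(f)$. Both maps are determined by their restriction to $\Lambda$ in degree $n$, by the universal property of $\cI$ as a left adjoint to restriction. They agree there because $\cI(\Theta_\Gamma)(F^n) = \Theta_\Gamma$ and $f$ is by definition that restriction. Everything else reduces to basic facts about injective envelopes over the finite group algebra $k[\GL_n(F)]$.
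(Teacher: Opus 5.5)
Your proposal is correct and follows essentially the same route as the paper. The paper also identifies $\Hom$ with $\Hom_{k[\GL_n(F)]}(\Lambda,\Theta_\Gamma)$ by adjunction and uses $\soc(\Theta_\Gamma)=\Gamma$ for (1). For (2) it likewise applies $S$, uses exactness of $\cI$, and returns via $TS\cong\id_{\VIg}$. Your explicit check that the map $\cI(\Lambda)\to\cI(\Theta_\Gamma)$ equals $\cI(f)$ fills in a step the paper leaves implicit.
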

\begin{proof}
    Applying $S$, we obtain a map $S(\phi) \colon \cI(\Lambda) \to \cI(\Theta_{\Gamma})$. By adjunction, we have an isomorphism $\Hom_{\cB}(\cI(\Lambda), \cI(\Theta_{\Gamma})) \cong \Hom_{\VBm}(\Lambda, \Theta_{\Gamma})$. Any nonzero map $\Lambda \to \Theta_{\Gamma}$ must be injective as $\Lambda$ is irreducible and this forces $\Lambda = \Gamma$ since $\soc(\Theta_{\Gamma}) = \Gamma$.
    By exactness of $\cI$, any nonzero $\cI(\Lambda) \to \cI(\Theta_{\Gamma})$ is also injective and in turn, the map $\phi$ is injective as $TS \cong \id_{\VIg}$.
\end{proof}

\begin{proposition}\label{prop:propvi}
    Properties (A1)--(A11) hold for $\VIg$. 
\end{proposition}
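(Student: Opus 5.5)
The plan is to check the eleven properties one at a time, working in $\cA = \VIg$ with $\cL_n = \Irr(k[\GL_n(F)])$, $K_\Lambda = T(\cI(\Lambda))$ and $I_\Lambda = T(\cI(\Theta_\Lambda))$. First I confirm that these are admissible data: $K_\Lambda$ and $I_\Lambda$ are finitely generated because $\Lambda$ and $\Theta_\Lambda$ are finite-dimensional. They are nonzero because $S(K_\Lambda) \cong \cI(\Lambda) \neq 0$ by Proposition~\ref{prop:inducedsat}.

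Most properties are immediate from what is already proved.
\begin{itemize}
\item (A1) holds because $k[\GL_n(F)]$ is a finite-dimensional algebra, so it has finitely many irreducibles.
\item (A2) is Lemma~\ref{lem:ext1}.
\item (A3) is Corollary~\ref{cor:extvanishing}.
\item (A4) is Corollary~\ref{cor:extvanishing2}.
\item (A7) and (A9) are the two parts of Lemma~\ref{lem:nononinj}.
\item (A8) is Lemma~\ref{lem:noinjective}.
\item (A10) is Theorem~\ref{thm:embedding}. Here I use that $T$ of an induced module $\cI(W)$, with $W$ finite length, is $K$-filtered: filter $W$ by its composition series and apply the exact functor $T\cI$. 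A semi-induced module then gives a $K$-filtered object by concatenating these filtrations.
\end{itemize}

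For (A11), I want $\VIg$ to be locally noetherian. Theorem~\ref{thm:noetherian} gives this for $\cB$, and the property passes to the Serre quotient. A finitely generated object of $\VIg$ has the form $T(M)$ with $M$ finitely generated. Every subobject of $T(M)$ is $T$ of a submodule of $M$, namely the preimage under $M \to ST(M)$. So ascending chains of subobjects of $T(M)$ lift to ascending chains of submodules of $M$, and these stabilize. Moreover $T$ preserves colimits and generators, so $\VIg$ is generated by the finitely generated objects $T(M)$.

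For (A5), I take the short exact sequence $0 \to \Lambda \to \Theta_\Lambda \to \Theta_\Lambda/\Lambda \to 0$ of $\GL_n(F)$-representations. Applying the exact functor $T\cI$ gives $I_\Lambda/K_\Lambda \cong T\cI(\Theta_\Lambda/\Lambda)$. A composition series of $\Theta_\Lambda/\Lambda$, whose factors all lie in $\cL_n$, then gives a filtration of $I_\Lambda/K_\Lambda$ with subquotients of the form $K_\mu$, $\mu \in \cL_n$.

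I expect (A6) to be the only step needing real care. By Proposition~\ref{prop:inducedsat}, the functor $S$ gives $\End_{\VIg}(I_\Lambda) \cong \End_{\cB}(\cI(\Theta_\Lambda))$. By adjunction this is $\Hom_{\VBm}(\Theta_\Lambda, \cI(\Theta_\Lambda))$. Since $\Theta_\Lambda$ sits in degree $n$ and $\cI(\Theta_\Lambda)(F^n) = \Theta_\Lambda$, this Hom space equals $\End_{k[\GL_n(F)]}(\Theta_\Lambda)$. I still need to verify that this chain of identifications is compatible with composition; the cleanest route is to note that evaluation at $F^n$ gives a ring homomorphism inverse to $\cI$ on endomorphisms. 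The resulting ring $\End_{k[\GL_n(F)]}(\Theta_\Lambda)$ is local, because $\Theta_\Lambda$ is an indecomposable injective of finite length.
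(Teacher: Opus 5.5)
Your proposal is correct and follows the paper's proof essentially item by item. It cites the same results for (A1)--(A4) and (A7)--(A10) and uses the same adjunction identification for (A6). The additional details you supply are points the paper leaves implicit, and your arguments for them are sound: that local noetherianity descends from $\cB$ to the Serre quotient $\VIg$ for (A11), and that the isomorphism $\End_{\VIg}(I_\Lambda)\cong\End_{k[\GL_n(F)]}(\Theta_\Lambda)$ respects composition.
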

\begin{proof}
\leavevmode
{\renewcommand{\theenumi}{A\arabic{enumi}}\renewcommand{\labelenumi}{\textup{(\theenumi)}}
\begin{enumerate}
\item clearly holds as the number of irreducible representations of a finite group is finite.
\item is Lemma~\ref{lem:ext1}.
\item is Corollary~\ref{cor:extvanishing}. 
\item is Corollary~\ref{cor:extvanishing2}.
\item holds since $\Theta_{\Lambda}$ has a filtration where the successive quotients are irreducible $\GL_{|\Lambda|}(F)$-modules and the functors $T$ and $\cI$ are exact.
\item holds since $\End_{\VIg}(I_{\Lambda}) \cong \End_{k[\GL_n(F)]}(\Theta_{\Lambda})$ by Corollary~\ref{cor:adjunction} which is a local ring since $\Theta_{\Lambda}$ is an indecomposable $k[\GL_n(F)]$-module being the injective envelope of $\Lambda$ by construction.
\item is Lemma~\ref{lem:nononinj}(2).
\item is Lemma~\ref{lem:noinjective}.
\item is Lemma~\ref{lem:nononinj}(1).
\item holds by Theorem~\ref{thm:embedding} as the image of a semi-induced object in $\VIg$ is $K$-filtered.
\item follows by Theorem~\ref{thm:noetherian}. \qedhere
\end{enumerate}}
\end{proof}

\subsection{Recovering Nagpal's results}
We now recover the main results from \cite{nag21vi}.
Given $\Lambda \in \Irr(k[\GL_d(F)])$, let $\cH(\Lambda)$ be the intersection of the kernels of all maps $\cI(\Lambda) \to M$ as $M$ varies over all finitely generated semi-induced $\VI$-modules generated in degree $< d$.
The next result was first proved by Nagpal \cite[Theorems 1.1, 1.3]{nag21vi}. We easily recover it using our framework.

\begin{theorem}\leavevmode
   \begin{enumerate}
   \item $\VIg$ is locally artinian.
   \item There is a bijection 
   \[
    H \colon \bigsqcup_{n \in \bN} \Irr(k[\GL_n(F)]) \to \Irr(\VIg)
   \]
   where under this correspondence $H(\Lambda) = \soc T(\cI(\Lambda))$ in $\VIg$ for $\Lambda \in \Irr(k[\GL_d(F)])$.
   \item We have $S(H(\Lambda)) = \cH(\Lambda)$ for all $\Lambda \in \cL$.
   \item The Castelnuovo--Mumford regularity of $\cH(\Lambda)$ is at most $2 |\Lambda|$ for all $\Lambda \in\cL$.
   \end{enumerate}
\end{theorem}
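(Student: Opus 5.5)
Parts (a) and (b) come straight from the framework. Proposition~\ref{prop:propvi} verifies (A1)--(A11) for $\VIg$, so Corollary~\ref{cor:main} shows that $\VIg$ is locally artinian and that $\{H_\Lambda\}_{\Lambda\in\cL}$ is a complete irredundant set of simples. This gives (a). For (b), I will define $H(\Lambda)=H_\Lambda$, and it remains to identify $H_\Lambda$ with $\soc T(\cI(\Lambda))=\soc K_\Lambda$. By Lemma~\ref{lem:Jlaminjenv}, $I_\Lambda$ is the injective envelope of $H_\Lambda$ in $\cA_{\leq |\Lambda|}$. Hence $\soc I_\Lambda=H_\Lambda$, and since $H_\Lambda\subset K_\Lambda\subset I_\Lambda$, we get $\soc K_\Lambda=H_\Lambda$. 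The socle may be computed in $\VIg$ because $\cA_{\le|\Lambda|}$ is closed under subobjects.

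For (c), I will apply the left exact functor $S$ and compare the two intersections of kernels. Corollary~\ref{cor:Kembed2} describes $H_\Lambda$ as the intersection of the kernels of all maps $K_\Lambda\to N$ with $N$ $K$-filtered of degree $<d$. By Proposition~\ref{prop:inducedsat}, these $N$ are exactly the $T(M)$ with $M$ semi-induced and generated in degree $<d$, and $S(N)\cong M$. Using $TS\cong\id$, full faithfulness of $S$, and $S(K_\Lambda)\cong\cI(\Lambda)$, maps $K_\Lambda\to N$ correspond bijectively to maps $\cI(\Lambda)\to M$. Since $S$ is left exact, it sends kernels to kernels. As a right adjoint, $S$ also preserves arbitrary intersections (limits) of subobjects. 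Therefore $S(H_\Lambda)=\bigcap\ker(\cI(\Lambda)\to M)=\cH(\Lambda)$.

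The one point needing care is this: a semi-induced module generated in degree $<d$ should give an object that is $K$-filtered of degree $<d$. For this I will use the fact that the induced subquotients $\cI(W)$ have $W$ concentrated in degrees $<d$. I will also use that every semi-induced $M$ satisfies $M\cong ST(M)$.

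For (d), I intend to apply Theorem~\ref{thm:glr} to $M=\cH(\Lambda)=S(H_\Lambda)$. First, $M$ is finitely generated: it is a submodule of $\cI(\Lambda)$, and $\cB$ is noetherian. Second, the unit $M\to ST(M)$ is an isomorphism because $M=S(H_\Lambda)$ and $TS\cong\id$. The remaining hypothesis, and the step I expect to be the main obstacle, is that $\dim_k M(F^n)$ equals $P(q^n)$ for a polynomial $P$ of degree exactly $d=|\Lambda|$.

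My plan for this step is as follows. Apply $S$ to $0\to H_\Lambda\to K_\Lambda\to K_\Lambda/H_\Lambda\to0$. By part (c) of Theorem~\ref{thm:main}, $K_\Lambda/H_\Lambda$ lies in $\cA_{\le d-1}$. It has finite length, so by Theorem~\ref{thm:finiteness} both $S$ and $\rR^1S$ of it are finitely generated. This gives an exact sequence $0\to\cH(\Lambda)\to\cI(\Lambda)\to S(K_\Lambda/H_\Lambda)$. The dimension of $\cI(\Lambda)(F^n)$ is a polynomial of degree $d$ in $q^n$ (Lemma~\ref{lem:qpol}). The module $S(K_\Lambda/H_\Lambda)$ embeds into $S$ of a $K$-filtered object of degree $<d$ (Corollary~\ref{cor:Kembed}), so its dimension is bounded by a polynomial of degree $<d$ (Corollary~\ref{cor:qpol}). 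Combining these with the fact that finitely generated $\VI$-modules have dimensions that are eventually polynomial in $q^n$ (from \cite{nag19vi}), I expect $\dim\cH(\Lambda)(F^n)$ to be a polynomial in $q^n$ of degree exactly $d$. Theorem~\ref{thm:glr} then bounds the regularity by $2d$.
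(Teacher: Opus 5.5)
Your proposal is correct and follows the paper's proof. Parts (a) and (b) come from Corollary~\ref{cor:main}, part (c) from Corollary~\ref{cor:Kembed2} together with left exactness of $S$, and part (d) from saturation of $S(H_\Lambda)$ plus Theorem~\ref{thm:glr}. Your additional details are sound and fill in points the paper leaves implicit: that $\soc K_\Lambda = H_\Lambda$, that $S$ preserves arbitrary intersections, and, in (d), the sandwich $\cH(\Lambda)\subset\cI(\Lambda)$ with cokernel controlled by Corollaries~\ref{cor:Kembed} and~\ref{cor:qpol}, combined with Nagpal's eventual polynomiality, which makes explicit the paper's ``easy induction''.
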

\begin{proof} \leavevmode
\begin{enumerate}
    \item This is the first part of Corollary~\ref{cor:main}, which applies by Proposition~\ref{prop:propvi}.
    \item This is second part Corollary~\ref{cor:main} with $H(\Lambda) \coloneqq H_{\Lambda}$.
    \item By Corollary~\ref{cor:Kembed2}, the object $H(\Lambda)$ is the intersection of kernels of all maps $K_{\Lambda} = T(\cI(\Lambda)) $ to $K$-filtered object of degree $<d$ in $\VIg$. It follows from the left exactness of $S$ that $S(H(\Lambda))$ is precisely the intersection of kernels of maps $\cI(\Lambda) $ to semi-induced modules of degree $< d$, which we defined to be $\cH(\Lambda)$. 
    \item By an easy induction, we have that $\dim_k (\cH(\Lambda)(F^n))$ grows to the order $q^{|\Lambda| n}$. The previous part now shows that $\cH(\Lambda)$ is saturated. Theorem~\ref{thm:glr}  now applies.\qedhere
\end{enumerate}
\end{proof}

\begin{remark}
    In characteristic zero, the category of $\VI$-modules was studied in detail by Gan--Watterlond \cite{gw18rep} (see also \cite{gw18stable}).
\end{remark}

As with Nagpal's work \cite{nag21vi}, almost all of the above results also apply to $\FI$-modules in positive characteristic. We leave this easy extension to the reader.

\section{Some nilpotent GL-algebras in positive characteristic} \label{s:gl}
Throughout this section, we fix an infinite field $k$ of characteristic $p > 0$ and let $q = p^r$ with $r \geq 1$.

We fix a countable infinite-dimensional $k$-vector space $\bV$ and let $\GL$ be the group of $k$-linear automorphisms of $\bV$. A \emph{polynomial representation} of $\GL$ is one that is a subquotient of (arbitrary) direct sums of tensor powers of $\bV$; i.e., the representation is contained in the smallest symmetric monoidal abelian category generated by $\bV$ in the category of all representations of (the $k$-points of) $\GL$. The category of polynomial representations of $\GL$ will be denoted by $\Rep^{\pol}(\GL)$. 

A $\GL$-algebra is a commutative algebra object in the category $\Rep^{\pol}(\GL)$.  See \cite[Section~2]{gan22ext} for some preliminary definitions and results.

Given a $\GL$-algebra $R$, we let $\Mod_R$ be the category of module objects for $R$ in $\Rep^{\pol}(\GL)$. An $R$-module is induced if it is of the form $R \otimes W$ for some finite length polynomial representation $W$ and it is semi-induced if it has a finite filtration where the successive quotients are induced.

A $\GL$-algebra $R$ is a $\GL$-domain if for all nonzero $\GL$-stable ideals $\fa, \fb$, we have $\fa \fb \ne 0$. For a $\GL$-domain $R$, we let $\Mod_R^{\tors}$ denote the Serre subcategory of torsion $R$-modules, i.e., $R$-modules locally annihilated by a nonzero ideal. The generic category is $\Mod_R^{\gen} = \Mod_R/\Mod_R^{\tors}$ with the canonical quotient functor being $T$ which has right adjoint $S$.

\subsection{Modules over truncated polynomial rings}
Let $A = \Sym(\bV)$ be the infinite-variable polynomial ring and $\fm \subset A$ be the homogeneous maximal ideal. The ideal $\fm^{[q]}$ is the Frobenius power of $\fm$ generated by the $q$-th power of all linear forms in $A$. 

We now apply the framework from Section~\ref{s:axioms} to $\GL$-equivariant $\Sm$-modules. The $\GL$-algebra $\Sm$ is a $\GL$-domain; let  $\cL = \Mod_{\Sm}$
so $\cM = \Mod_{\Sm}^{\gen}$. 

\begin{theorem}\label{thms} \leavevmode
\begin{enumerate}
    \item The category $\cL$ is locally noetherian.
    \item Assume $M$ and $N$ are finitely generated $\Sm$-modules. For all $i \geq 0$, we have $\dim_k(\ext^i_{\cL}(M, N)) < \infty$.
    \item Assume $C$ is a finitely generated generic $\Sm$-module. There exists an injection $C \to I$ where  $I = T(N)$ is the image under $T$ of a finitely generated semi-induced $\Sm$-module $N$.
    \item Assume $M$ is a semi-induced $\Sm$-module. The map $M \to ST(M)$ is an isomorphism and $\rR^iS(T(M)) = 0$ for all $i > 0$.    
    \item Assume $C$ is a finitely generated object in $\cM$. The $\Sm$-module $\rR^iS (C)$ is finitely generated for all $i \geq 0$ and vanishes for $i \gg 0$.
\end{enumerate}
\end{theorem}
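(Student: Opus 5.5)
This theorem is the analogue, for $\Sm$-modules, of the $\VI$ results recalled in Theorem~\ref{thm:noetherian}, Lemma~\ref{lem:fext}, Theorem~\ref{thm:embedding}, Proposition~\ref{prop:inducedsat} and Theorem~\ref{thm:finiteness}. I plan to obtain each part mostly by citing the author's earlier work \cite{gan22ext, gan24monomial, gan25pol}, supplemented by short formal arguments where the citation gives a slightly different formulation. I will take the parts in order, since later parts use earlier ones.

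\emph{Parts (a) and (b).} For (a), I expect to cite noetherianity of $\Sm$: finitely generated $\Sm$-modules are noetherian. The plan is to reduce to $\Sm \otimes W$ with $W$ a finite-length polynomial representation. Since $\Sm$ is finite over nothing in the usual sense but has a polynomial-degree grading, the relevant reference should be \cite{gan22ext} or \cite{gan24monomial}.

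Part (b) then follows exactly as Lemma~\ref{lem:fext}. Using (a), resolve $M$ by finitely generated projectives, or more precisely by modules of the form $\Sm \otimes P$ with $P$ a finitely generated projective polynomial representation, which exist in $\Rep^{\pol}(\GL)$ of bounded degree. Each term $\Hom(\Sm\otimes P, N) = \Hom_{\GL}(P,N)$ is finite-dimensional, because $N$ has finite-dimensional graded pieces in each polynomial degree and $P$ is concentrated in boundedly many degrees. One subtlety needs care here: $\Rep^{\pol}(\GL)$ splits by degree, and projectives exist degreewise (for example $\bV^{\otimes d}$, or divided powers $\Gamma^\lambda$). So the resolution is by modules $\Sm\otimes \Gamma^{\lambda}$, and I will verify that these are projective in $\cL$.

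\emph{Parts (c), (d) and (e).} These are the shift theorem, the saturation of semi-induced modules, and finiteness of local cohomology, respectively. For (c), I will cite the structure theorem from \cite{gan25pol}: every finitely generated $\Sm$-module maps to a semi-induced module with torsion kernel. Applying $T$ then gives the injection, exactly as in the proof of Theorem~\ref{thm:embedding}.

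For (d), I will reduce by dévissage along the semi-induced filtration to the induced case $M = \Sm \otimes W$. In that case, $\rR^iS(T(M))$ is computed as $\varinjlim \ext^i(\fa, M)$ over nonzero ideals $\fa$, in the usual way via the relative local cohomology triangle. So the claim becomes $\rR^i\Gamma_{\tors}(\Sm\otimes W) = 0$ for $i = 0, 1$ together with vanishing of higher local cohomology. The vanishing of $\ext^i(\Sm/\fa, \Sm\otimes W)$ in the colimit should be the content of the depth and injectivity results in \cite{gan22ext}, where $\Sm$ behaves like a self-injective algebra degreewise.

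For (e), I will use (c) and (d). Take $0\to C\to T(N)\to C'\to 0$ with $N$ semi-induced. By (d), $\rR^iS(C)\cong \rR^{i-1}S(C')$ for $i\ge 2$, and $S(C)$ and $\rR^1S(C)$ fit into an exact sequence with finitely generated terms. Because (a) holds, subquotients of finitely generated modules are finitely generated.

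\emph{Main obstacle.} The induction in (e) needs a measure that decreases when passing from $C$ to $C'$. I would try the polynomial-degree ($\GL$-degree) bound on generators together with the length of the generic part. I expect this termination argument, and the local cohomology vanishing in (d), to be the main obstacles; both are presumably exactly what the cited papers establish, so in practice each step should reduce to a reference.
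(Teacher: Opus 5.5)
For (a)--(c) your plan matches the paper's. Part (a) is a citation (the paper uses \cite[Theorem~3.1]{gan25pol}). Part (b) is the projective-resolution argument of Lemma~\ref{lem:fext}. Part (c) is the shift theorem \cite[Theorem~E]{gan25pol} followed by applying $T$. One correction in (b): the graded pieces $N_d$ of a finitely generated $\Sm$-module are finite-length polynomial representations, not finite-dimensional spaces; already $\Sm_1=\bV$ is infinite-dimensional. What makes $\Hom_{\GL}(P,N)$ finite-dimensional is that $\Hom$ between finite-length polynomial representations is finite-dimensional, e.g.\ via Schur algebras.

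For (d) and (e) the paper does nothing but cite \cite[Corollary~3.8 and Proposition~3.9]{gan24monomial}, and your own sketches do not stand on their own.

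In (d), the d\'evissage to $\Sm\otimes W$ and the reformulation as $\rR^i\Gamma_{\tors}(\Sm\otimes W)=0$ for all $i\geq 0$ are fine. But that vanishing is the entire content, and you do not argue it. The ``self-injective degreewise'' heuristic is not a proof. Also, \cite{gan22ext} treats the exterior algebra, not $\Sm$.

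In (e), the long exact sequence bookkeeping is right: $\rR^iS(C)\cong\rR^{i-1}S(C')$ for $i\geq 2$, $S(C)\subset N$, and $\rR^1S(C)\cong\coker(N\to S(C'))$. However, the induction has no decreasing invariant. With an arbitrary embedding $C\hookrightarrow T(N)$ from (c), nothing drops: enlarge $N$ by an induced summand $\Sm\otimes W'$ of large degree and $C'$ only gets bigger. Your other proposed measure, ``length of the generic part'', is circular. Finite length of finitely generated objects of $\cM$ is a consequence of Theorem~\ref{thm:mainSm} and Corollary~\ref{cor:main}. Those rest on (A2), i.e.\ Lemma~\ref{lem:ext1B}, whose proof needs finite generation of $\rR^1S(D)$, which is part (e) itself.

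To make your induction work you would need a sharper form of the shift theorem: an embedding whose cokernel is generated in strictly lower degree, followed by induction on generation degree, as in the $\VI$ setting. Otherwise you should simply cite, as the paper does.
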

\begin{proof}
\begin{enumerate}
    \item See \cite[Theorem~3.1]{gan25pol} and references therein.
    \item Since $M$ is finitely generated and $\Mod_{\Sm}$ is locally noetherian, there exists a projective resolution 
   \[
   \ldots \to P_1 \to P_0 \to M \to 0
   \]
   with $P_i$ being a finitely generated projective $\Sm$-module for $i \geq 0$.
   Applying the functor $\Hom_{\Sm}(-, N)$ to the above, we obtain a complex
   \[
    0 \to \Hom_{\cL}(P_0, N) \to \Hom_{\cL}(P_1, N) \to \ldots
   \]
   whose homology is $\ext^i_{\cL}(M, N)$. Each term of the above complex is a finite-dimensional vector space over $k$, whence the result follows.
   \item Let $M$ be a finitely generated $\Sm$-module such that $T(N) \cong C$. By the shift theorem \cite[Theorem~E]{gan25pol}, we have a map $M \to N$ with torsion kernel, where $N$ is a finitely generated semi-induced module. Applying the functor $T$, we thus get an injection $T(M) \cong C \to I \coloneqq T(N)$ as required.
   \item This is \cite[Corollary~3.8]{gan24monomial}.
   \item This is \cite[Proposition~3.9]{gan24monomial}.
\end{enumerate}
\end{proof}

 For $n \geq 0$, let $\cL_n$ be the set of irreducible polynomial representations of $\GL$ of degree $n$. Recall that for each partition $\lambda \vdash n$, there is a unique irreducible polynomial representation of degree $n$ with highest weight $\lambda$ denoted $L_{\lambda}$.
 
For $\lambda \in \cL_n$, let $K_{\lambda} = T(\Sm \otimes L_{\lambda})$
and $I_{\lambda} = T(\Sm \otimes \Theta({\lambda}))$ where $\Theta(\lambda)$ is the injective envelope of $L_{\lambda}$ in $\Rep^{\pol}(\GL)$. It follows by Theorem~\ref{thms}(d) that an object in $\cM$ is $K$-filtered if and only if its saturation is semi-induced.

The verification of (A1)--(A11) is now entirely analogous to the previous section. 

\begin{lemma}\label{lem:gssB}
   Let $N$ be an object in $\cM$ such that $S(N)$ is semi-induced and $M$ be an arbitrary $\Sm$-module. We have isomorphisms $\ext^i_{\cL}(M, S(N)) \cong \ext^i_{\cM}(T(M), N)$ for all $i \geq 0$.
\end{lemma}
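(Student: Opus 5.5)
The argument is the same as for Lemma~\ref{lem:gss}, with Theorem~\ref{thms}(d) playing the role of Proposition~\ref{prop:inducedsat}. Choose an injective resolution $N \to I^{\bullet}$ in $\cM$ and apply the right adjoint $S$. This gives a complex $S(N) \to S(I^{\bullet})$ in $\cL$, whose cohomology in positive degrees is $\rR^iS(N)$.

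Since $S(N)$ is semi-induced, Theorem~\ref{thms}(d) applied to $M' = S(N)$ gives $\rR^iS(T(S(N))) = 0$ for $i > 0$. We have $T(S(N)) \cong N$ because $TS \cong \id$; this holds for any Serre quotient of a Grothendieck category by a localizing subcategory, and $\Mod^{\tors}_{\Sm}$ is localizing, being closed under colimits since torsion is defined locally. Hence $\rR^iS(N) = 0$ for $i > 0$, and $S(N) \to S(I^{\bullet})$ is exact. Moreover, $S$ preserves injectives, being right adjoint to the exact functor $T$. So $S(I^{\bullet})$ is an injective resolution of $S(N)$ in $\cL$.

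It follows that $\ext^i_{\cL}(M, S(N))$ is the cohomology of $\Hom_{\cL}(M, S(I^{\bullet}))$. By the adjunction $T \dashv S$, this complex is naturally isomorphic, termwise and compatibly with the differentials, to $\Hom_{\cM}(T(M), I^{\bullet})$. Its cohomology is $\ext^i_{\cM}(T(M), N)$, which proves the lemma.

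The only step needing any care is the identification $TS(N) \cong N$, so that Theorem~\ref{thms}(d) yields vanishing of $\rR^iS$ at $N$ itself; this is standard. Everything else is formal.
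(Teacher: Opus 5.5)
Your proposal is correct and follows the paper's argument exactly: the paper proves this by citing Lemma~\ref{lem:gss} ``mutatis mutandis'', which is the same route you take — apply $S$ to an injective resolution of $N$, use Theorem~\ref{thms}(d) for the vanishing of $\rR^iS(N)$ and the fact that $S$ preserves injectives, then conclude by the adjunction $T \dashv S$. Your explicit remark that $TS(N) \cong N$ is what lets Theorem~\ref{thms}(d), applied to the semi-induced module $S(N)$, give vanishing of $\rR^iS$ at $N$ itself is a step the paper leaves implicit.
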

\begin{proof}
   The proof of Lemma~\ref{lem:gss} applies mutatis mutandis. 
\end{proof}

\begin{corollary}\label{cor:adjunctionB}
 Let $W$ be a degree $d$ polynomial representation of $\GL$ and $N$ be a semi-induced $\Sm$-module. For $i \geq 0$, we have isomorphisms
  \[\ext^i_{\cM}( T(\Sm \otimes W), T(N)) \cong \ext^i_{\GL}(W, N_d).\]  
\end{corollary}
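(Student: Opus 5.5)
We argue exactly as in Corollary~\ref{cor:adjunction}, computing the Ext group on the module side in two ways. First, Theorem~\ref{thms}(d) says $N$ is saturated, $N \cong ST(N)$, so $S(T(N))$ is semi-induced. Lemma~\ref{lem:gssB}, applied to $T(N)$ and $M = \Sm \otimes W$, then gives
\[\ext^i_{\cM}(T(\Sm \otimes W), T(N)) \cong \ext^i_{\cL}(\Sm \otimes W, N) \qquad \text{for all } i \geq 0.\]
This reduces the statement to computing $\ext^i_{\cL}(\Sm \otimes W, N)$.

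Second, we identify $\Hom_{\cL}(\Sm \otimes W, -)$ with the functor $N \mapsto \Hom_{\GL}(W, N_d)$. Extension of scalars $\Sm \otimes -$ is left adjoint to the forgetful functor $\cL \to \Rep^{\pol}(\GL)$, so $\Hom_{\cL}(\Sm \otimes W, N) \cong \Hom_{\GL}(W, N)$. Since $W$ is homogeneous of degree $d$, a $\GL$-map $W \to N$ lands in the degree-$d$ component, so this equals $\Hom_{\GL}(W, N_d)$. Here we use that $\Rep^{\pol}(\GL)$ splits as a product of its degree-$d$ pieces, so $N \mapsto N_d$ is an exact projection.

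Next we pass to derived functors. The functor $N \mapsto N_d$ from $\cL$ to degree-$d$ polynomial representations is exact. It has an exact left adjoint, namely $W \mapsto \Sm \otimes W$, which is exact because $\Sm$ is flat over $k$. Hence $N \mapsto N_d$ carries injectives of $\cL$ to injectives of $\Rep^{\pol}(\GL)$. Applying this functor to an injective resolution of $N$ in $\cL$ therefore yields an injective resolution of $N_d$. Combined with the adjunction isomorphism above, this gives $\ext^i_{\cL}(\Sm \otimes W, N) \cong \ext^i_{\GL}(W, N_d)$, and composing with the first isomorphism proves the claim.

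The step that needs the most care is that injectives of $\Rep^{\pol}(\GL)$ restrict to injectives of the degree-$d$ block. This is what makes $\ext^i_{\GL}$ well-defined and unambiguous, and it follows from the block decomposition of $\Rep^{\pol}(\GL)$ by degree.
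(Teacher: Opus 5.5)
Your proposal is correct and follows the paper's route. The paper proves this by transporting the proof of Corollary~\ref{cor:adjunction}: on the generic side it uses Lemma~\ref{lem:gssB} together with Theorem~\ref{thms}(d), and on the module side it uses the free--forgetful adjunction with an exact, injective-preserving forgetful functor. Your remark that $N \mapsto N_d$ preserves injectives because its left adjoint $\Sm \otimes -$ is exact makes explicit a point the paper leaves implicit.
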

\begin{proof}
   The proof of Corollary~\ref{cor:adjunction} applies mutatis mutandis. 
\end{proof}

\begin{corollary}\label{cor:extvanishingB}
    Let $\lambda, \mu \in \cL$ satisfy $|\lambda| < |\mu|$. For all $i\geq 0$, we have
    \[\ext^i_{\cM}(K_{\lambda}, K_{\mu}) = 0.\] 
\end{corollary}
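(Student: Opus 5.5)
We follow the proof of Corollary~\ref{cor:extvanishing} for $\VI$-modules, using Corollary~\ref{cor:adjunctionB} in place of Corollary~\ref{cor:adjunction}. Put $d = |\lambda|$ and $e = |\mu|$, so that $K_{\lambda} = T(\Sm \otimes L_{\lambda})$ with $L_{\lambda}$ homogeneous of degree $d$.

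The $\Sm$-module $N = \Sm \otimes L_{\mu}$ is induced, hence semi-induced, and $T(N) = K_{\mu}$. Applying Corollary~\ref{cor:adjunctionB} with $W = L_{\lambda}$ gives
\[
\ext^i_{\cM}(K_{\lambda}, K_{\mu}) \cong \ext^i_{\GL}(L_{\lambda}, (\Sm \otimes L_{\mu})_d).
\]
The ring $\Sm$ is graded in non-negative degrees, and $L_{\mu}$ is homogeneous of degree $e$. Therefore $(\Sm \otimes L_{\mu})_d = (\Sm)_{d-e} \otimes L_{\mu}$. Since $d < e$, this vanishes, and so every Ext group above is zero.

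The only delicate point is the meaning of $N_d$ in Corollary~\ref{cor:adjunctionB}. It is the degree-$d$ polynomial component of $N$, that is, the piece picked out by the decomposition of polynomial representations by degree. This is what makes the forgetful adjunction $\Hom(\Sm \otimes W, N) \cong \Hom_{\GL}(W, N)$ reduce to $\Hom_{\GL}(W, N_d)$ when $W$ has pure degree $d$, because there are no morphisms between polynomial representations of different degrees. The same degree separation applies on the level of Ext.

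Granting the corollary as stated, there is no real obstacle: the argument is the verbatim analogue of the degree argument $\cI(\Gamma)(F^n) = 0$ used for $\VI$-modules.
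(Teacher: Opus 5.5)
Your proposal is correct and matches the paper's argument. The paper proves this corollary by transporting the proof of Corollary~\ref{cor:extvanishing}: apply Corollary~\ref{cor:adjunctionB} with $W = L_{\lambda}$ and the induced module $N = \Sm \otimes L_{\mu}$, then note that $(\Sm \otimes L_{\mu})_{|\lambda|} = 0$ because $|\lambda| < |\mu|$. Your explanation of $N_d$ as the degree-$d$ component, with Ext computed blockwise via the degree decomposition of $\Rep^{\pol}(\GL)$, correctly spells out what the paper's ``mutatis mutandis'' leaves implicit.
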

\begin{proof}
   The proof of Corollary~\ref{cor:extvanishing} applies mutatis mutandis. 
\end{proof}

\begin{corollary}\label{cor:extvanishing2B}
    Let $\lambda, \mu \in \cL$ satisfy $|\lambda| \leq |\mu|$. For all $i > 0$, we have
    \[\ext^i_{\cM}(K_{\lambda}, I_{\mu}) = 0.\] 
\end{corollary}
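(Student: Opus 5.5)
The plan is to mirror the proof of Corollary~\ref{cor:extvanishing2} from the $\VI$ setting, with Corollary~\ref{cor:adjunctionB} playing the role of Corollary~\ref{cor:adjunction}. Write $d = |\lambda|$. First I check that $\Sm \otimes \Theta(\mu)$ is semi-induced, so that Corollary~\ref{cor:adjunctionB} applies to $N = \Sm \otimes \Theta(\mu)$. The point is that $\Theta(\mu)$ should be a finite length polynomial representation of degree $|\mu|$. The injective envelope of $L_\mu$ in $\Rep^{\pol}(\GL)$ lives in the degree $|\mu|$ block, and this block is equivalent to modules over the Schur algebra $S(\infty, |\mu|)$, or its stable version. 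There the injective hull of a simple is a direct summand of $\bV^{\otimes |\mu|}$-type objects such as divided/symmetric powers. Concretely, $\Theta(\mu)$ is a summand of $\Sym^{\nu}(\bV)$ for suitable $\nu$, which has finite length.

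Hence $\Sm \otimes \Theta(\mu)$ is induced, in particular semi-induced. Note that this finite-length point is implicitly already assumed by the paper, since (A5) requires $I_\mu/K_\mu$ to be $K$-filtered. Corollary~\ref{cor:adjunctionB} then gives
\[\ext^i_{\cM}(K_\lambda, I_\mu) \cong \ext^i_{\GL}\bigl(L_\lambda, (\Sm \otimes \Theta(\mu))_d\bigr),\]
where $\ext_{\GL}$ means Ext in $\Rep^{\pol}(\GL)$, and $(-)_d$ is the degree-$d$ polynomial component.

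Next I compute the degree $d$ piece. Since $\Sm$ is concentrated in nonnegative degrees with degree-zero part $k$, we get $(\Sm \otimes \Theta(\mu))_d = \bigoplus_{j} (\Sm)_{d-j} \otimes \Theta(\mu)_j$. But $\Theta(\mu)$ is homogeneous of degree $|\mu|$. So the degree $d$ piece vanishes if $|\mu| > d$, and equals $\Theta(\mu)$ if $|\mu| = d$.

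In the first case every Ext vanishes. In the second case $\ext^i_{\GL}(L_\lambda, \Theta(\mu)) = 0$ for $i > 0$ because $\Theta(\mu)$ is injective in $\Rep^{\pol}(\GL)$. Here I must be careful that the Ext in Corollary~\ref{cor:adjunctionB} is computed in $\Rep^{\pol}(\GL)$, or equivalently in its degree-$d$ block, which is a direct summand category. In that category $\Theta(\mu)$ is injective by definition.

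The main obstacle is the identification of the Ext category in Corollary~\ref{cor:adjunctionB}. Its proof, copied from Corollary~\ref{cor:adjunction}, uses that the forgetful functor $\Mod_{\Sm} \to \Rep^{\pol}(\GL)$ is exact with left adjoint $\Sm \otimes -$. From this one gets $\ext^i_{\cL}(\Sm \otimes W, N) \cong \ext^i_{\Rep^{\pol}}(W, N)$. The polynomial category splits by degree, so this equals $\ext^i$ of $W$ into the degree-$d$ component $N_d$. This is exactly where injectivity of $\Theta(\mu)$ in the polynomial category (not in all representations of $\GL$) is what is needed, and it holds by construction.
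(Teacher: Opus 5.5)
Your proposal is correct and follows the paper's route: the paper proves this by transporting the proof of Corollary~\ref{cor:extvanishing2}, i.e., using Corollary~\ref{cor:adjunctionB} to reduce to $\ext^i_{\GL}\bigl(L_\lambda, (\Sm\otimes\Theta(\mu))_{|\lambda|}\bigr)$. That group vanishes outright when $|\mu|>|\lambda|$, and for $i>0$ by injectivity of $\Theta(\mu)$ when $|\mu|=|\lambda|$. Your extra checks are correct details that the paper leaves implicit: that $\Theta(\mu)$ has finite length, so $\Sm\otimes\Theta(\mu)$ is induced and Corollary~\ref{cor:adjunctionB} applies, and that the Ext is taken in $\Rep^{\pol}(\GL)$, where $\Theta(\mu)$ is injective.
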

\begin{proof}
   The proof of Corollary~\ref{cor:extvanishing2} applies mutatis mutandis. 
\end{proof}

\begin{lemma}\label{lem:ext1B}
    For finitely generated $C$ and $D$ in $\cM$ and $i \in \{0,1\}$,  we have 
    \[\dim_k(\ext^i_{\cM}(C, D)) < \infty.\]
\end{lemma}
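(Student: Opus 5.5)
The plan is to copy the proof of Lemma~\ref{lem:ext1} from the $\VI$ setting, replacing Lemma~\ref{lem:fext}, Theorem~\ref{thm:finiteness} and the adjunction $T \dashv S$ by their analogues for $\Sm$-modules in Theorem~\ref{thms}.

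First, for the case $i=0$, put $M = S(C)$ and $N = S(D)$. Since $C$ and $D$ are finitely generated objects of $\cM$, the modules $M$ and $N$ are finitely generated $\Sm$-modules by Theorem~\ref{thms}(e) applied with $i=0$. Moreover $T(M) \cong C$ because $TS \cong \id$. The adjunction then gives
\[
\Hom_{\cM}(C,D) \cong \Hom_{\cM}(T(M), D) \cong \Hom_{\cL}(M, S(D)) = \Hom_{\cL}(M,N),
\]
and this space is finite-dimensional by Theorem~\ref{thms}(b).

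Second, for the case $i=1$, I will use the Grothendieck spectral sequence for the composite of $S$ with $\Hom_{\cL}(M,-)$. This composite is naturally isomorphic to $\Hom_{\cM}(C,-)$. The spectral sequence applies because $S$ is right adjoint to the exact functor $T$, so it sends injectives to injectives. Its five-term exact sequence in low degrees contains the segment
\[
\ext^1_{\cL}(M, S(D)) \to \ext^1_{\cM}(C,D) \to \Hom_{\cL}(M, \rR^1S(D)).
\]
By Theorem~\ref{thms}(e), both $S(D)$ and $\rR^1S(D)$ are finitely generated. Hence Theorem~\ref{thms}(b) makes the two outer terms finite-dimensional, and therefore the middle term is finite-dimensional as well.

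I do not expect a real obstacle. The only point to check is that finite generation of $C$ in $\cM$ means $C \cong T(M')$ for some finitely generated $\Sm$-module $M'$, so that Theorem~\ref{thms}(e) applies as stated. Also, the proof needs no Ext-vanishing hypothesis on $D$, since the spectral sequence argument works for arbitrary $D$.
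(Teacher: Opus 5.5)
Your proposal is correct and is exactly the paper's argument. The paper says the proof of Lemma~\ref{lem:ext1} applies mutatis mutandis: adjunction handles $\Hom$, and the low-degree exact sequence of the Grothendieck spectral sequence for $\Hom_{\cL}(M,S(-))$ handles $\ext^1$, with Theorem~\ref{thms}(b) and (e) standing in for Lemma~\ref{lem:fext} and Theorem~\ref{thm:finiteness}.
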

\begin{proof}
   The proof of Lemma~\ref{lem:ext1} applies mutatis mutandis. 
\end{proof}

\begin{lemma}\label{lem:estimate} \leavevmode
    \begin{enumerate}
    \item For $\lambda \in \cL$, the graded object $S(K_{\lambda})(k^n)$ is nonzero in degree $(q-1)n + |\lambda|$ for $n \gg 0$ and is zero above that degree.
    \item Assume $C$ is a gr-subK object of degree $<d$ in $\cM$. The graded object $S(C)(k^n)$ is supported in degrees $< (q-1)n + d$ for $n \gg 0$.
    \end{enumerate}
\end{lemma}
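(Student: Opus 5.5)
Part (a) reduces to a computation on induced modules. Since $\Sm \otimes L_{\lambda}$ is induced, hence semi-induced, Theorem~\ref{thms}(d) gives $S(K_{\lambda}) \cong \Sm \otimes L_{\lambda}$. Evaluating at $k^n$ (the standard evaluation of a polynomial functor) gives
\[ S(K_{\lambda})(k^n) \cong k[x_1,\ldots,x_n]/(x_1^q,\ldots,x_n^q) \otimes L_{\lambda}(k^n), \]
because the Frobenius power $\fm^{[q]}$ restricted to $k^n$ is generated by the $q$-th powers of the linear forms, hence by $x_1^q,\ldots,x_n^q$ (using that Frobenius is additive). The truncated polynomial ring has top degree $(q-1)n$, spanned by $x_1^{q-1}\cdots x_n^{q-1}$, and is zero above it. The representation $L_{\lambda}$ is homogeneous of degree $|\lambda|$, and $L_{\lambda}(k^n)\neq 0$ once $n \geq \ell(\lambda)$. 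So the tensor product is nonzero in degree $(q-1)n+|\lambda|$ for $n \gg 0$ and vanishes in higher degrees.

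For part (b), I will mimic Corollary~\ref{cor:qpol}, replacing dimension growth by top degree. Take a filtration $0=C^0\subset\cdots\subset C^t=C$ with injections $C^{i+1}/C^i \hookrightarrow D^i$, where each $D^i$ is $K$-filtered of degree $<d$. I first show that $S(D)(k^n)$ vanishes in degrees $\geq (q-1)n+d$ for any $K$-filtered $D$ of degree $\leq d-1$. Take its filtration by $K_{\mu}$ with $|\mu|\leq d-1$ and apply the left exact functor $S$. This gives exact sequences $0\to S(D^{j})\to S(D^{j+1})\to S(K_{\mu(j)})$, so degreewise
\[ S(D^{j+1})(k^n)_e \subseteq \text{(extension of)}\; S(D^j)(k^n)_e \text{ and a subspace of } S(K_{\mu(j)})(k^n)_e. \]
By part (a), $S(K_{\mu(j)})(k^n)$ vanishes in degrees $> (q-1)n+|\mu(j)|$, hence in all degrees $\geq (q-1)n+d$. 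Induction on the length of the filtration gives the claim; alternatively one can use that $S(D)$ is semi-induced with induced pieces of the same shape.

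Left exactness of $S$ then gives $S(C^{i+1}/C^i)\hookrightarrow S(D^i)$, so these subquotients are also supported in degrees $<(q-1)n+d$. A second induction along $0\to S(C^i)\to S(C^{i+1})\to S(C^{i+1}/C^i)$ finishes the argument. Evaluation at $k^n$ is exact, so each of these sequences remains exact degreewise after evaluation. Since there are only finitely many pieces, a single threshold "$n\gg0$" works for all of them, though the vanishing bound in (b) actually holds for all $n$.

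The main point to check is the identification $S(K_\lambda)=\Sm\otimes L_\lambda$ and the explicit description of its evaluation at $k^n$, specifically that $\fm^{[q]}$ evaluates to $(x_1^q,\ldots,x_n^q)$. The rest is formal bookkeeping.
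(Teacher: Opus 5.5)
Your proposal is correct and follows the paper's argument. You identify $S(K_\lambda)\cong \Sm\otimes L_\lambda$ via Theorem~\ref{thms}(d), and you deduce (b) from the left exactness of $S$ by d\'evissage along the gr-subK and $K$-filtrations. The only difference is that you carry out the top-degree computation for (a) explicitly, via $\fm^{[q]}(k^n)=(x_1^q,\ldots,x_n^q)$, whereas the paper simply cites \cite[Lemma~5.12]{gan25pol} for it.
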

\begin{proof}
   (a) Since $S(K_{\lambda}) \cong \Sm \otimes L_{\lambda}$ by Theorem~\ref{thms}(d), this result is \cite[Lemma~5.12]{gan25pol}.
   
   (b) This follows from the first part and left exactness of $S$.
\end{proof}

\begin{lemma}\label{lem:noinjectiveB}
    Assume $C$ is a gr-subK object of degree $<d$ in $\cM$. For $\lambda \in \cL_d$, there are no injective maps $K_{\lambda} \to C$.
\end{lemma}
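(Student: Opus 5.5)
We follow the proof of Lemma~\ref{lem:noinjective}, replacing the growth-rate comparison in $q^n$ by a comparison of top degrees supplied by Lemma~\ref{lem:estimate}.

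Let $\phi \colon K_{\lambda} \to C$ be a morphism with $\lambda \in \cL_d$ and $C$ gr-subK of degree $<d$; we show $\phi$ is not injective. Apply the left exact functor $S$ to obtain $S(\phi) \colon S(K_{\lambda}) \to S(C)$. By Theorem~\ref{thms}(d), $S(K_{\lambda}) \cong \Sm \otimes L_{\lambda}$. Evaluating at $k^n$ is exact, so $S(\phi)$ yields a morphism of graded vector spaces
\[ S(K_{\lambda})(k^n) \to S(C)(k^n) \]
that preserves degree, since all maps in $\Rep^{\pol}(\GL)$ respect the polynomial grading.

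Choose $n$ large enough that both parts of Lemma~\ref{lem:estimate} apply. By Lemma~\ref{lem:estimate}(a), the source is nonzero in degree $(q-1)n + d$. By Lemma~\ref{lem:estimate}(b), the target vanishes in every degree $\geq (q-1)n + d$. Therefore the degree-$((q-1)n+d)$ component of $S(\phi)(k^n)$ is a map from a nonzero space to zero, so it is not injective. Consequently $S(\phi)$ is not injective.

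If $\phi$ were injective, left exactness of $S$ would make $S(\phi)$ injective, which we have just ruled out. Hence $\phi$ is not injective.

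The step needing the most care is justifying Lemma~\ref{lem:estimate}(b) for a general gr-subK object rather than a single $K_\mu$. As in Corollary~\ref{cor:qpol}, take a filtration $C^i$ of $C$ with injections $C^{i+1}/C^i \hookrightarrow D^i$, each $D^i$ being $K$-filtered of degree $<d$.
\begin{enumerate}
\item By left exactness, $S(C^{i+1}/C^i)$ embeds into $S(D^i)$.
\item Again by left exactness, $S(D^i)(k^n)$ is filtered by subobjects of the modules $S(K_\mu)(k^n)$ with $|\mu| \leq d-1$.
\item By Lemma~\ref{lem:estimate}(a), each such $S(K_\mu)(k^n)$ vanishes above degree $(q-1)n+|\mu| < (q-1)n+d$.
\item The left exact sequences $0 \to S(C^i) \to S(C^{i+1}) \to S(C^{i+1}/C^i)$ then give the bound on $S(C)(k^n)$.
\end{enumerate}
Because only finitely many $\mu$ occur, a single $n$ works for all of them simultaneously.
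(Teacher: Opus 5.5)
Your proposal is correct and follows the paper's proof: apply $S$, use $S(K_\lambda)\cong \Sm\otimes L_\lambda$, evaluate at $k^n$ for $n \gg 0$, and compare the components in degree $(q-1)n+d$ via Lemma~\ref{lem:estimate}(a) and (b). Your closing paragraph just spells out the paper's one-line justification of Lemma~\ref{lem:estimate}(b). That justification is left exactness of $S$ applied along the gr-subK filtration and the $K$-filtrations, as in Corollary~\ref{cor:qpol}, and your version of it is fine.
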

\begin{proof}
    By applying $S$ to a map $\phi \colon K_{\lambda} \to C$, we obtain a map $S(\phi) \colon S(K_{\lambda}) \cong \Sm \otimes L_\lambda \to S(C)$. In particular, we also obtain a map $S(\phi)(k^n) \colon (\Sm \otimes L_{\lambda})(k^n) \to S(C)(k^n)$ for $n \gg 0$. By Lemma~\ref{lem:estimate}(a) and (b), the graded object $(\Sm \otimes L_{\lambda})(k^n)$ is nonzero in degree $(q-1)n+d$ whereas the graded object $S(C)(k^n)$ vanishes in that degree. Hence $S(\phi)(k^n)$ is not injective, and in turn, neither is $\phi$.
\end{proof}

\begin{lemma}\label{lem:nononinjB}
    Let $\phi\colon K_{\lambda} \to I_{\mu}$ be a nonzero map with $|\mu| = |\lambda|$. Then (1) $\mu = \lambda$ and (2) $\phi$ is injective.
\end{lemma}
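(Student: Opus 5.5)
I will follow the proof of Lemma~\ref{lem:nononinj}, replacing group algebras by polynomial representations of $\GL$ and $\cI$ by the functor $\Sm \otimes -$. Let $d = |\lambda| = |\mu|$. First I apply $S$ to $\phi$. By Theorem~\ref{thms}(d), $S(K_{\lambda}) \cong \Sm \otimes L_{\lambda}$ and $S(I_{\mu}) \cong \Sm \otimes \Theta(\mu)$, since both are induced, hence semi-induced. Because $TS \cong \id_{\cM}$, the map $S(\phi)$ is nonzero, and it suffices to show that $S(\phi)$ is injective: the exact functor $T$ then carries the injection $S(\phi)$ back to $\phi$.

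Next I use the adjunction between $\Sm \otimes -$ and the forgetful functor $\Mod_{\Sm} \to \Rep^{\pol}(\GL)$. It gives
\[ \Hom_{\cL}(\Sm \otimes L_{\lambda}, \Sm \otimes \Theta(\mu)) \cong \Hom_{\GL}(L_{\lambda}, \Sm \otimes \Theta(\mu)). \]
Since $L_{\lambda}$ is homogeneous of degree $d$, this equals $\Hom_{\GL}(L_{\lambda}, (\Sm \otimes \Theta(\mu))_d) = \Hom_{\GL}(L_{\lambda}, \Theta(\mu))$; this is also the $i=0$ case of Corollary~\ref{cor:adjunctionB}. Here $\Theta(\mu)$ is concentrated in degree $d$, as injective envelopes in $\Rep^{\pol}(\GL)$ respect the degree decomposition, and $\Sm$ has degree-$0$ part $k$. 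So $S(\phi)$ corresponds to a nonzero map $f \colon L_{\lambda} \to \Theta(\mu)$. Since $L_\lambda$ is simple, $f$ is injective, and its image lies in $\soc \Theta(\mu) = L_{\mu}$. This forces $\lambda = \mu$, which proves (1).

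For (2), the map $S(\phi)$ is the $\Sm$-linear extension $\id_{\Sm} \otimes f$ of $f$. Because $\Sm \otimes -$ is exact over the field $k$, $\id \otimes f$ is injective, so $S(\phi)$ is injective, and hence so is $\phi$.

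The main obstacle is verifying carefully that the adjunction isomorphism identifies $S(\phi)$ with $\id \otimes f$. Concretely, this means showing that an $\Sm$-linear map out of an induced module is determined by its restriction to the generating representation $L_{\lambda}$. This is formal, but it requires that $S(K_\lambda)$ be identified with $\Sm\otimes L_\lambda$ compatibly with the unit map, which Theorem~\ref{thms}(d) provides.
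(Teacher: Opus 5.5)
Your proposal is correct and follows the paper's own argument: the paper proves this lemma by transporting the proof of Lemma~\ref{lem:nononinj}. That proof applies $S$ and uses the adjunction to reduce to a nonzero map $L_\lambda \to \Theta(\mu)$, where simplicity and the socle force $\lambda=\mu$; it then gets injectivity from exactness of $\Sm\otimes -$ together with $TS\cong\id$. The compatibility with the unit map that you flag as the main obstacle is not actually needed, since any choice of isomorphisms $S(K_\lambda)\cong \Sm\otimes L_\lambda$ and $S(I_\mu)\cong \Sm\otimes\Theta(\mu)$ turns $S(\phi)$ into a map of the form $\id\otimes f$, and composing with isomorphisms does not affect injectivity.
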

\begin{proof}
    The proof of Lemma~\ref{lem:nononinj} applies mutatis mutandis.
\end{proof}

\begin{proposition}
    Properties (A1)--(A11) hold for $\cM$. 
\end{proposition}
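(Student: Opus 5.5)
The plan is to verify the eleven properties one at a time, following the proof of Proposition~\ref{prop:propvi} essentially verbatim and substituting the $\Sm$-analogues established above.

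\textbf{Properties supplied directly by earlier results.}
\begin{itemize}
\item (A1) holds because the irreducible polynomial representations of degree $n$ are the $L_\lambda$ with $\lambda \vdash n$, and there are finitely many such partitions.
\item (A2) is Lemma~\ref{lem:ext1B}.
\item (A3) is Corollary~\ref{cor:extvanishingB}.
\item (A4) is Corollary~\ref{cor:extvanishing2B}.
\item (A7) and (A9) are Lemma~\ref{lem:nononinjB}(2) and (1).
\item (A8) is Lemma~\ref{lem:noinjectiveB}.
\item (A11) is Theorem~\ref{thms}(a).
\item (A10) follows from Theorem~\ref{thms}(c), because $T$ of a semi-induced module is $K$-filtered.
\end{itemize}
For (A10) the one point to check is that an induced module $\Sm \otimes W$, with $W$ of finite length, is $K$-filtered. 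This holds because $\Sm \otimes -$ is exact (the tensor product is over $k$) and $T$ is exact: a composition series of $W$ with factors $L_\mu$ then gives a filtration of $T(\Sm\otimes W)$ with factors $K_\mu$.

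\textbf{(A5).} We need $\Theta(\lambda)$ to have a finite composition series, with every factor an irreducible of degree $|\lambda|$. Polynomial representations of degree $n$ form a Serre subcategory closed under injective envelopes: they are modules over the Schur algebra $S(N,n)$ for $N \geq n$, a finite-dimensional algebra. So $\Theta(\lambda)$ is finite length with all composition factors of degree $n$. Applying the exact functor $T(\Sm\otimes -)$ then shows that $I_\lambda/K_\lambda$ is $K$-filtered of degree $|\lambda|$.

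\textbf{(A6).} By Corollary~\ref{cor:adjunctionB} with $i=0$, together with Theorem~\ref{thms}(d), we get $\End_{\cM}(I_\lambda) \cong \Hom_{\GL}(\Theta(\lambda), (\Sm\otimes\Theta(\lambda))_d)$ with $d = |\lambda|$. The degree-$d$ part of $\Sm\otimes\Theta(\lambda)$ is exactly $\Theta(\lambda)$, since $\Sm$ is concentrated in nonnegative degrees with degree-$0$ part $k$. Hence $\End_{\cM}(I_\lambda) \cong \End_{\GL}(\Theta(\lambda))$, which is local because $\Theta(\lambda)$ is an indecomposable injective envelope.

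\textbf{Main obstacle.} The only genuine subtlety is the bookkeeping of degrees in (A5) and (A6): Corollary~\ref{cor:adjunctionB} requires $W$ homogeneous of degree $d$, and $\Theta(\lambda)$ must be taken in the degree-$d$ polynomial category. Here I would use that $\Rep^{\pol}(\GL)$ splits as a direct sum over degrees, so the envelope of $L_\lambda$ in $\Rep^{\pol}(\GL)$ coincides with its envelope in the degree-$d$ block.
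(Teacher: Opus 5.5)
Your proposal is correct and follows the paper's proof essentially item by item, citing the same lemmas for each of (A1)--(A11). The only difference is that you make explicit some details the paper leaves implicit: the Schur-algebra argument that $\Theta(\lambda)$ has finite length with all composition factors of degree $|\lambda|$ for (A5), the identification $(\Sm\otimes\Theta(\lambda))_{|\lambda|}=\Theta(\lambda)$ for (A6), and the $K$-filtration of $T(\Sm\otimes W)$ for (A10).
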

\begin{proof}
\leavevmode
{\renewcommand{\theenumi}{A\arabic{enumi}}\renewcommand{\labelenumi}{\textup{(\theenumi)}}
\begin{enumerate}
\item clearly holds as the number of partitions of $n$ is finite.
\item is Lemma~\ref{lem:ext1B}.
\item is Corollary~\ref{cor:extvanishingB}. 
\item is Corollary~\ref{cor:extvanishing2B}.
\item holds since $\Theta(\mu)$ has a finite filtration where the successive quotients are irreducible $\GL$-representations of degree $|\mu|$ and the functors $T$ and $\Sm \otimes_k (-)$ are exact.
\item holds since $\End_{\cM}(I_{\lambda}) \cong  \End_{\GL}(\Theta(\lambda))$ by Corollary~\ref{cor:adjunctionB} which is a local ring since $\Theta(\lambda)$ is an indecomposable $\GL$-representation being the injective envelope of $L_{\lambda}$. 
\item is Lemma~\ref{lem:nononinjB}(2).
\item is Lemma~\ref{lem:noinjectiveB}.
\item is Lemma~\ref{lem:nononinjB}(1).
\item holds by Theorem~\ref{thms}(c) as the image of a semi-induced object in $\cM$ is $K$-filtered.
\item holds by Theorem~\ref{thms}(a).
\qedhere
\end{enumerate}}
\end{proof}

For $n \in \bN$, let $\cM_{\leq n}$ be the localizing subcategory generated by $T(\Sm \otimes L_{\lambda})$ with $|\lambda|\leq n$.  For $\lambda \in \cL_n$, we define $H_{\lambda}$ as the intersection of kernels of maps $K_{\lambda}$ to objects in $\cM_{\leq n-1}$. By Theorem~\ref{thm:main}, we have the following:
\begin{theorem}\label{thm:mainSm} 
For all $n \in \bN$, we have the following:
   \begin{enumerate}
        \item Every finitely generated object in $\cM_{\leq n}$ has finite length.
        \item The set $\{H_{\lambda}\}_{\lambda \in \cL_{\leq n}}$ is a complete set of pairwise non-isomorphic simples of $\cM_{\leq n}$.
        \item For $\lambda \in \cL_{\leq n}$, the object $K_{\lambda}/H_{\lambda}$ lies in $\cM_{\leq |\lambda|-1}$.
        \item For $\lambda \in \cL_{\leq n}$, the injective envelope $J_{\lambda, n}$ of $H_{\lambda}$ in $\cM_{\leq n}$ is of finite length.
        \item For $\lambda \in \cL_n$, we have $J_{\lambda, n} = I_{\lambda}$.
        \item For $\lambda \in \cL_{\leq n}$, the indecomposable injective $J_{\lambda, n}$ is gr-subK of degree $\leq n$.
    \end{enumerate}
\end{theorem}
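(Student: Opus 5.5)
The statement is Theorem~\ref{thm:main} specialized to $\cA = \cM$, with $\cL_n$ the irreducible polynomial representations of degree $n$, $K_\lambda = T(\Sm \otimes L_\lambda)$ and $I_\lambda = T(\Sm \otimes \Theta(\lambda))$. So the plan is to check that the hypotheses of Theorem~\ref{thm:main} hold and that every object in the statement agrees with the corresponding object of Section~\ref{s:axioms}.

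First, $\cM$ is a $k$-linear Grothendieck abelian category, being a Serre quotient of the Grothendieck category $\Mod_{\Sm}$ by a localizing subcategory. The preceding proposition gives Properties (A1)--(A11) for $\cM$. We also need $K_\lambda \subset I_\lambda$, with both objects nonzero and finitely generated. The inclusion holds because $\Sm \otimes (-)$ and $T$ are exact and $L_\lambda \subset \Theta(\lambda)$. Finite generation holds because $\Theta(\lambda)$ has finite length as a polynomial representation; this is implicit in (A5). Nonvanishing holds because $\Sm \otimes L_\lambda$ is semi-induced, so Theorem~\ref{thms}(d) gives $S T(\Sm\otimes L_\lambda) \cong \Sm \otimes L_\lambda \neq 0$.

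Second, the categories $\cM_{\leq n}$ defined here are by definition the $\cA_{\leq n}$ of Section~\ref{s:axioms}, namely the localizing subcategories generated by the $K_\lambda$ with $|\lambda| \leq n$. The only subtle point is $H_\lambda$. Definition~\ref{defn:HLambda} intersects kernels only over maps to \emph{finitely generated} targets in $\cA_{\leq |\lambda|-1}$, whereas the definition here allows arbitrary targets in $\cM_{\leq n-1}$. Allowing more targets can only shrink the intersection, so one containment is automatic. For the reverse containment, take any $\phi\colon K_\lambda \to C$ with $C \in \cM_{\leq n-1}$. Its image is a quotient of the finitely generated object $K_\lambda$, hence finitely generated, and it lies in $\cM_{\leq n-1}$ because a localizing subcategory is closed under subobjects. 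Factoring $\phi$ through its image gives a map with the same kernel and a finitely generated target, so the two intersections coincide.

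With these identifications, parts (a)--(f) are verbatim parts (a)--(f) of Theorem~\ref{thm:main}. Since the axioms have already been verified, the only step needing care is reconciling the definition of $H_\lambda$, and the image argument above handles it.
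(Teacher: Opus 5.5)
Your proposal is correct and follows the paper's route exactly: the paper obtains this theorem by applying Theorem~\ref{thm:main}, once the preceding proposition has verified (A1)--(A11) for $\cM$. The paper leaves implicit the check that the Section~\ref{s:gl} definition of $H_\lambda$ (arbitrary targets in $\cM_{\leq n-1}$) agrees with Definition~\ref{defn:HLambda} (finitely generated targets); your argument for this, factoring each map through its finitely generated image, is sound.
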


\subsection{Modules over the exterior algebras}
The framework from Section~\ref{s:axioms} also appliies to $\GL$-equivariant modules over the infinite-variable exterior algebra $\lw(\bV)$; the necessary analogues of Theorem~\ref{thms} are proved in \cite{gan22ext}. We leave this easy extension to the reader.

\bibliographystyle{alpha} 
\bibliography{xxx}

\appendix
\section{Grothendieck abelian categories}\label{s:appx}
The results in this section are well-known; we include them for the sake of completeness. 
For the definition of a Grothendieck abelian category, see \cite[\href{https://stacks.math.columbia.edu/tag/079A}{Tag 079A}]{stacks-project}. Recall Grothendieck's criterion (AB5) is that all direct sums exist and filtered colimits are exact, or equivalently, all direct sums exist and filtered colimits commute with finite limits. We will use these two definitions interchangeably.

Throughout, we assume $\cA$ is a Grothendieck abelian category.

Given an object $N$ and a collection of subobjects $\{N_{\alpha} \hookrightarrow N \}_{\alpha \in J}$, we define their sum, denoted $\sum N_{\alpha}$ to be the smallest subobject of $N$ containing each $N_{\alpha}$. If the family is directed, then by (AB5) we have $\sum_{\alpha \in J} N_{\alpha} = \varinjlim_{\alpha \in J} N_{\alpha}$. 

An object $N$ is \emph{finitely generated} if for subobjects $\{N_i\}_{i \in J}$ satisfying $\sum_{i \in J} N_i = N$, there exists a finite subset $J' \subset J$ such that $\sum_{i \in J'} N_i = N$.

\begin{lemma}\label{lem:fg}
    Let $J$ be a directed set, and 
    $\{M_i\}_{i \in J}$ be a direct system in $\cA$.
    Let $M$ be an object in $\cA$ and $N \subset M$ be finitely generated. 
    Given a surjective map $\phi \colon \varinjlim M_i \to M$, there exists $j \in J$ such that $N \subset \phi(M_j)$.
\end{lemma}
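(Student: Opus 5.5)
The plan is to reduce to the definition of finite generation by exhibiting $N$ as a directed union of the subobjects $N \cap \phi(M_i)$. For each $i \in J$, let $\iota_i \colon M_i \to \varinjlim M_i$ be the canonical map, and set $N_i = N \cap \phi(\iota_i(M_i))$, where the intersection is the pullback of the two subobjects of $M$. Since $J$ is directed and the images $\iota_i(M_i)$ form a directed family of subobjects of $\varinjlim M_i$, the family $\{N_i\}_{i \in J}$ is a directed family of subobjects of $N$. (Here $\phi(M_j)$ in the statement means $\phi(\iota_j(M_j))$.)

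The first step is to show that $\sum_i \iota_i(M_i) = \varinjlim M_i$. The sum of the images receives compatible maps from all $M_i$, so the universal map $\varinjlim M_i \to \sum_i \iota_i(M_i)$, followed by the inclusion back into $\varinjlim M_i$, is the identity on each $\iota_i$. Hence it is the identity, and the inclusion is an epimorphism.

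Applying the surjection $\phi$, images commute with sums, so $\sum_i \phi(\iota_i(M_i)) = M$. This family is directed, so by (AB5) this sum is the filtered colimit $\varinjlim_i \phi(\iota_i(M_i))$ as subobjects of $M$.

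The key step is to intersect with $N$. By (AB5), filtered colimits commute with finite limits, in particular with the pullback along $N \hookrightarrow M$. This gives
\[ \sum_i N_i = \varinjlim_i \bigl(N \times_M \phi(\iota_i(M_i))\bigr) = N \times_M \varinjlim_i \phi(\iota_i(M_i)) = N \times_M M = N. \]
I expect this to be the one point needing care. One must check that the colimit of the subobjects $\phi(\iota_i(M_i))$, computed in $\cA$, agrees with their sum inside $M$; the paper's convention on directed sums supplies this. One must also check that pullback along a fixed monomorphism commutes with directed colimits, which is exactly the finite-limit form of (AB5).

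Finally, since $N$ is finitely generated, there is a finite subset $J' \subset J$ with $\sum_{i \in J'} N_i = N$. Directedness gives some $j \in J$ dominating every element of $J'$, so $N_i \subset N_j$ for $i \in J'$. Then $N = N_j \subset \phi(\iota_j(M_j))$, as required.
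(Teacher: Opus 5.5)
Your proposal is correct and follows essentially the same route as the paper. The paper also replaces the $M_i$ by their images in $M$, uses (AB5) to write $N = \sum_i (N \cap \phi(M_i))$, and concludes by finite generation of $N$. You make explicit the steps the paper leaves implicit: that the images $\phi(\iota_i(M_i))$ exhaust $M$, that intersecting with $N$ commutes with the directed union, and that directedness lets a single index dominate the finite subfamily.
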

\begin{proof}
   Replacing $M_i$ with its image inside $M$, we may assume we have subobjects $M_i \subset M$ and a directed sum $M = \sum_{i \in J} M_i$. 
   Let $N_i = N \cap M_i$ for all $i \in J$.  
   By (AB5), we have $N = \sum_{i \in J} (N \cap M_i)$. So finite generation of $N$ implies that there exists $j \in J$ such that $N = N \cap M_j$ or $N \subset M_j$, as required.
\end{proof}

\subsection{Baer's criterion}
We first prove Baer's criterion; the proof is a routine application of Zorn's lemma.
\begin{proposition}\label{prop:baer}
Assume every nonzero object in $\cA$ contains a nonzero finitely generated subobject. 
Let $I$ be an
object of $\cA$ such that $\ext^1_{\cA}(M, I) = 0$ for all finitely generated
objects $M$. Then $I$ is injective.
\end{proposition}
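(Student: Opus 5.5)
To prove Proposition~\ref{prop:baer}, I would show that every morphism $f\colon A \to I$ from a subobject $A \subset B$ extends to a morphism $B \to I$. This is enough: given $0 \to I \to B \to C \to 0$, extending $\id_I$ along $I \subset B$ splits the sequence. So fix $A \subset B$ and $f \colon A \to I$. Consider the set $\Sigma$ of pairs $(A', f')$ with $A \subset A' \subset B$ and $f'\colon A' \to I$ extending $f$, ordered by extension. Subobjects of $B$ form a set in a Grothendieck category, so $\Sigma$ is a set.

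The first step is to check the hypothesis of Zorn's lemma. Given a chain $\{(A_\alpha, f_\alpha)\}$, put $A_\infty = \sum A_\alpha$. By (AB5) this equals $\varinjlim A_\alpha$, since the chain is directed and the transition maps are monomorphisms. The $f_\alpha$ are compatible, so they induce $f_\infty \colon A_\infty \to I$, which is an upper bound. Zorn's lemma then gives a maximal pair $(A', f')$.

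The second step is to show that $A' = B$. Suppose not. Then $B/A'$ is nonzero, so by hypothesis it contains a nonzero finitely generated subobject $M/A'$ with $A' \subsetneq M \subset B$. The short exact sequence $0 \to A' \to M \to M/A' \to 0$ gives the exact segment
\[ \Hom_{\cA}(M, I) \to \Hom_{\cA}(A', I) \to \ext^1_{\cA}(M/A', I). \]
The last term vanishes because $M/A'$ is finitely generated. So $f'$ extends to $M$, which contradicts maximality. Hence $A' = B$, and $I$ is injective.

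The only delicate point is the chain step: we need the identification of the directed sum with the colimit, which is exactly (AB5), so that the compatible maps glue to a map on $A_\infty$. Note also that we do not need $M$ itself to be finitely generated, only its image $M/A'$ in $B/A'$, and that is exactly what the hypothesis provides.
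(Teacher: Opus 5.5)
Your proposal is correct and follows essentially the same route as the paper: Zorn's lemma on partial extensions, (AB5) to glue maps along chains, and the vanishing of $\ext^1_{\cA}(M/A', I)$ for a finitely generated $M/A' \subset B/A'$ to contradict maximality. The only point left implicit is that $\Sigma$ is nonempty, which is immediate since $(A, f) \in \Sigma$.
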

\begin{proof}
Let $N' \subset N$ be a subobject and $f \colon N' \to I$ be a map. To prove that $I$ is injective, it suffices to extend $N' \to I$ to a map $N \to I$. Our assumption is that this is possible whenever $N/N'$ is finitely generated.

Let 
\[\Sigma = \{(P, g) \,\vert\, N' \subset P \subset N, g \colon P \to I \text{ with } g|_{N'} = f\},\] 
with the partial ordering
$(P, g) \le (Q, h)$ if $P \subset Q$ and $h|_{P} = g$; it is clearly nonempty. 
Given a chain $\{P_{\alpha}, g_{\alpha}\}_{\alpha \in \Gamma}$ in $\Sigma$,
the sum $P \coloneqq \sum P_{\alpha} = \varinjlim P_{\alpha}$ is a subobject of $N$ by (AB5) containing $N'$ so $g \coloneqq \varinjlim g_{\alpha}$ is an extension of $f$ to $P$. So $(P, g)$ is an upper bound of the given chain in $\Sigma$. By Zorn's lemma, the set $\Sigma$ has a maximal element $(Q, h)$.

For the sake of contradiction, assume $Q \neq N$. 
By assumption, the object $N/Q$ contains a nonzero finitely generated subobject; let its preimage be $M \subset N$. By the first paragraph, the map $h \colon Q \to I$ extends to $h' \colon M \to I$ as $M/Q$ is finitely generated, contradicting maximality. So $Q = N$, as required.
\end{proof}

\subsection{Localizing subcategories}
A \emph{Serre subcategory} of $\cA$ is a full subcategory that is closed under subquotients and extensions. A Serre subcategory is a \emph{localizing subcategory} if it is closed under arbitrary direct sums, or equivalently, arbitrary direct limits. 
\begin{lemma}\label{lem:obvious}
    For objects $M$ and $N$ in a localizing subcategory $\cD \subset \cA$, we have $\ext^i_{\cD}(M, N) \cong \ext^i_{\cA}(M, N)$ for $i \in \{0,1\}$.
\end{lemma}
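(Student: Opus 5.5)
The plan is to handle the two cases separately. For $i=0$ there is nothing to do: $\cD$ is a full subcategory, so $\Hom_{\cD}(M,N)=\Hom_{\cA}(M,N)$. For $i=1$ we use the Yoneda description of $\ext^1$ in both categories, as equivalence classes of short exact sequences $0\to N\to E\to M\to 0$. Since $\cD$ is localizing, its kernels and cokernels agree with those computed in $\cA$. Hence we can compare extensions in the two categories directly.

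The key step is to show that the obvious map $\ext^1_{\cD}(M,N)\to\ext^1_{\cA}(M,N)$ is a bijection. It sends a short exact sequence in $\cD$ to the same sequence viewed in $\cA$. The inclusion $\cD\hookrightarrow\cA$ is exact, because $\cD$ is closed under subobjects and quotients, so exactness is preserved. The same reasoning shows that equivalences of extensions and Baer sums are preserved, so the map is a well-defined group homomorphism.

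\textbf{Surjectivity.} Take any extension $0\to N\to E\to M\to 0$ in $\cA$. Since $N$ and $M$ lie in $\cD$ and $\cD$ is closed under extensions, $E$ also lies in $\cD$. So the sequence already comes from $\cD$.

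\textbf{Injectivity.} Suppose two $\cD$-extensions become equivalent in $\cA$. The equivalence is witnessed by a morphism $E\to E'$ compatible with the maps from $N$ and to $M$. This morphism lies in $\cD$ because $\cD$ is full, so the extensions are already equivalent in $\cD$.

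One point needs care: that Yoneda $\ext^1$ computes the derived-functor $\ext^1$ in these categories. Both $\cA$ and $\cD$ are Grothendieck abelian, so they have enough injectives, and derived-functor $\ext^1$ agrees with Yoneda $\ext^1$ in any abelian category with enough injectives. This is standard, so the main obstacle is only this bookkeeping. I would also remark that the argument uses only closure of $\cD$ under extensions and subquotients. Closure under direct sums plays no role, so the statement holds for any Serre subcategory, and it need not extend to $i\ge 2$.
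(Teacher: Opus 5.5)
Your proposal is correct and follows the paper's proof: fullness gives $i=0$, and closure under extensions gives $i=1$. The paper states these two observations without the Yoneda bookkeeping you supply. One small caveat on your closing remark: a mere Serre subcategory need not have enough injectives, so there the statement should be read with Yoneda $\ext^1$ on the $\cD$ side.
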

\begin{proof}
    For $i = 0$, this is the property that a localizing subcategory is full, and for $i=1$, this is the property that a localizing subcategory is closed under extensions.
\end{proof}

Assume $X$ is a collection of objects in $\cA$. The \emph{localizing subcategory generated by $X$} is the smallest localizing subcategory of $\cA$ containing $X$. This is well-defined because the arbitrary intersection of localizing subcategories is also a localizing subcategory.
\begin{definition}
An object $M$ is \emph{$X$-subfiltered} if there exists a finite filtration 
\[0 = M^0 \subset M^1 \subset \cdots \subset M^n = M\] 
such that for all $i \in [n]$, each $M^i/M^{i-1}$ is isomorphic to a subquotient of an object in $X$.
\end{definition}
Let $\cC_0$ be the full subcategory on the $X$-subfiltered objects. It is easy to check that $\cC_0$ is a Serre subcategory.
Let $\cC$ be the full subcategory on objects of $\cA$ all of whose finitely generated subobjects are in $\cC_0$. The goal of this section is to prove that the localizing subcategory generated by $X$ is $\cC$, provided the ambient category is locally noetherian.

\begin{lemma} \label{lem:locgen}
Assume $\cA$ is locally noetherian. The category $\cC$ is a localizing subcategory.
\end{lemma}
\begin{proof}
Let $M$ be an object in $\cC$, i.e., every finitely generated subobject of $M$ is in $\cC_0$. Closure of $\cC$ under subobjects is clear since a finitely generated subobject of a subobject of $M$ is a finitely generated subobject of $M$ thus in $\cC_0$.

Let $\pi \colon M \to N$ be a surjection with $M$ in $\cC$, and let $N_0 \subset N$ be finitely generated. Writing $\{M_i\}_{i \in I}$ for the (directed) family of finitely generated subobjects of
$M$, we get a surjection $\epsilon \colon \varinjlim M_i  \to M$. By Lemma~\ref{lem:fg}, there exists $j$ such that $N_0 \subset \pi(\epsilon(M_j))$, or $N_0$ is a subquotient of $M_j$. 
Since $M_j \subset M$ is finitely generated, it lies in $\cC_0$ and therefore, the object $N_0$ also lies in the Serre subcategory $\cC_0$. So every finitely generated subobject of $N$ lies in $\cC_0$, so by the definition of $\cC$, the object $N$ lies in $\cC$.

Next assume $0 \to N \to M \to Q \to 0$ is exact with $N$ and $Q$ in $\cC$, and let $M_0 \subset M$ be finitely generated. Then $N \cap M_0$ is finitely generated by noetherianity, and $M_0/(N \cap M_0) \cong (M_0 + N)/N$ is a finitely generated subobject of $Q$; both lie in $\cC_0$ by assumption so $M_0$ also lies in the Serre subcategory $\cC_0$.

Finally, assume $\{M_i\}_{i \in I}$ are objects of $\cC$ and $M = \bigoplus_{i \in I} M_i$. Let $N \subset M$ be a finitely generated subobject.   Setting $M_J = \bigoplus_{i \in J} M_i$ for all finite subsets $J \subset I$, we have $M = \varinjlim_{|J| < \infty} M_J$. So by Lemma~\ref{lem:fg}, we have $N \subset M_J$ for some finite set $J$. The object $M_J$ lies in $\cC$ by the previous paragraph, so the finitely generated subobject $N \subset M_J$ lies in $\cC_0$. 
\end{proof}

\begin{lemma}\label{lem:dirunion}
Assume $\cA$ is locally noetherian. Every object in $\cC$ is a directed union of objects in $\cC_0$.
\end{lemma}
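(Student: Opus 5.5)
Let $M$ be an object of $\cC$. The plan is to write $M$ as the sum of its finitely generated subobjects and check that this exhibits $M$ as a directed union of objects of $\cC_0$.

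First I will let $\{M_i\}_{i \in I}$ be the family of all finitely generated subobjects of $M$, ordered by inclusion. This family is directed: if $M_i$ and $M_j$ are finitely generated, then $M_i + M_j$ is a quotient of $M_i \oplus M_j$. A finite direct sum of finitely generated objects is finitely generated, and a quotient of a finitely generated object is finitely generated; both follow quickly from the definition via sums of subobjects, since a quotient of a finitely generated object is the image of a surjection from it. By the definition of $\cC$, each $M_i$ lies in $\cC_0$.

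Next I will show that $\sum_{i \in I} M_i = M$, which by (AB5) identifies $M$ with $\varinjlim M_i$, a directed union. Here I use that $\cA$ is locally noetherian, hence locally finitely generated, so $M$ is a quotient of a direct sum $\bigoplus_\alpha G_\alpha$ of finitely generated objects. The image of each $G_\alpha$ in $M$ is a finitely generated subobject, hence is one of the $M_i$. These images together generate $M$, so the sum of all the $M_i$ is $M$.

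The only point requiring care is this generation claim, namely that every object is the sum of its finitely generated subobjects. This is exactly where local noetherianity (in particular, the existence of a generating set of finitely generated objects) enters; without it, the sum of the finitely generated subobjects could be a proper subobject. Once this is in place, the lemma is immediate.
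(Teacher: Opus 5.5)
Your proof is correct and follows the paper's argument. The paper also writes an object of $\cC$ as the directed union of its finitely generated subobjects, using local noetherianity, and notes that these lie in $\cC_0$ by the definition of $\cC$; you additionally spell out the routine details the paper leaves implicit (directedness via $M_i + M_j$ being a quotient of $M_i \oplus M_j$, and the existence of a generating set of finitely generated objects, which makes $M$ the sum of its finitely generated subobjects).
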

\begin{proof}
   By local noetherianity, every object in $\cA$ is the directed union of its finitely generated subobjects. By definition of $\cC$, every finitely generated subobject of an object in $\cC$ lies in $\cC_0$. The result follows.
\end{proof}

\begin{proposition}\label{prop:AppA}
    Assume $\cA$ is locally noetherian. The localizing subcategory generated by $X$ is precisely $\cC$. In particular, every finitely generated object $M$ in the localizing subcategory generated by $X$ is $X$-subfiltered. 
\end{proposition}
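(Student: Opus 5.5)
We show two inclusions. Write $\langle X\rangle$ for the localizing subcategory generated by $X$.

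First, $\langle X\rangle \subset \cC$. By Lemma~\ref{lem:locgen}, $\cC$ is a localizing subcategory, so it suffices to check that every object of $X$ lies in $\cC$. Let $x \in X$ and let $N \subset x$ be a finitely generated subobject. Then $N$ is a subquotient of $x$, so the one-step filtration $0 \subset N$ exhibits $N$ as $X$-subfiltered. Hence $N \in \cC_0$, and so $x \in \cC$.

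Second, $\cC \subset \langle X\rangle$.
\begin{itemize}
\item Every subquotient of an object of $X$ lies in $\langle X\rangle$, because a Serre subcategory is closed under subquotients.
\item Every $X$-subfiltered object lies in $\langle X\rangle$. This follows by induction on the length of the filtration, using closure of $\langle X\rangle$ under extensions. Thus $\cC_0 \subset \langle X\rangle$.
\item Now let $M \in \cC$. By Lemma~\ref{lem:dirunion}, $M$ is the directed union of its finitely generated subobjects, each of which lies in $\cC_0$. A directed union is a filtered colimit, and by (AB5) this colimit is the image of the direct sum $\bigoplus_i M_i \to M$. This direct sum lies in $\langle X\rangle$ since localizing subcategories are closed under direct sums, and its image $M$ then lies in $\langle X\rangle$ by closure under quotients.
\end{itemize}

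Combining the two inclusions gives $\langle X\rangle = \cC$.

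For the final assertion, let $M$ be a finitely generated object of $\langle X\rangle = \cC$. Then $M$ is itself a finitely generated subobject of $M$, so $M \in \cC_0$ by the definition of $\cC$; that is, $M$ is $X$-subfiltered.

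Noetherianity enters only through Lemmas~\ref{lem:locgen} and~\ref{lem:dirunion}, which are already established. The step needing care is therefore just the identification of a directed union with a quotient of a direct sum, which is routine.
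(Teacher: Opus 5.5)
Your proof is correct and follows the paper's argument. Both inclusions are obtained exactly as in the paper: Lemma~\ref{lem:locgen} gives $\langle X\rangle \subset \cC$, and $\cC_0 \subset \langle X\rangle$ together with Lemma~\ref{lem:dirunion} gives the reverse. The only difference is that you spell out closure under directed unions as taking a quotient of a direct sum, where the paper simply cites closure of localizing subcategories under filtered colimits.
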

\begin{proof}
    Let $E$ be an object in $X$. Every finitely generated subobject of $E$ is trivially $X$-subfiltered and thus lies in $\cC_0$. So by the definition of $\cC$, the object $E$ lies in $\cC$. Therefore, the localizing subcategory generated by $X$ is contained in $\cC$ by Lemma~\ref{lem:locgen}. 

    The category $\cC_0$ is contained in the localizing subcategory generated by $X$. By Lemma~\ref{lem:dirunion}, we thus get the reverse containment as localizing subcategories are closed under filtered colimits.
\end{proof}
\begin{example}\label{exm}
    We use the notations from this section in this example.
    Let $k$ be a field. Let $R = \Sym(k^{\infty})$ with homogeneous maximal ideal $\fm$ and $A = R/\fm^2$. 
    Consider $\cA = \Mod_A$ and $X = \{ A/\fm \cong k\}$. Let $\cD$ be the localizing subcategory generated by $X$. We claim $\cD = \Mod_A$: indeed, we see $A_1 = k^{\infty}$ lies in $\cD$ and so we see from the short exact sequence $0 \to A_1 \to A \to k \to 0$ that $A$ lies in $\cD$ as well. 
    The subcategory $\cC_0$ is the full subcategory on objects which are finite-dimensional over $k$.
    From this, we see that the subcategory $\cC$ does not contain $A$ as $A$ itself is finitely generated, so $\cC$ is strictly contained in $\cD$ without the noetherian hypothesis.
\end{example}
Despite the previous example, the localizing subcategory generated by simple objects will not contain other simples, as one expects, regardless of noetherian assumptions.
\begin{lemma}\label{lem:Appsimple}
   Let $X$ be a collection of simple objects in $\cA$. Assume $L$ is a simple object in the localizing subcategory generated by $X$. Then there exists $H \in X$ such that $L \cong H$. 
\end{lemma}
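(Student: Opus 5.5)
The plan is to show that the full subcategory $\cD$ of objects all of whose simple subquotients are isomorphic to members of $X$ is a localizing subcategory. Since $\cD$ contains $X$, it then contains the localizing subcategory generated by $X$, so the simple object $L$ lies in $\cD$. Being simple, $L$ is its own simple subquotient, hence $L \cong H$ for some $H \in X$. Here a ``simple subquotient'' of $M$ means a simple object of the form $P/Q$ with $Q \subset P \subset M$. No noetherian hypothesis should be needed, consistent with the remark preceding the lemma.

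The verification proceeds in three steps.
\begin{enumerate}
\item Closure of $\cD$ under subquotients is immediate, since a subquotient of a subquotient is a subquotient.
\item For extensions, take $0 \to N \to M \to Q \to 0$ with $N, Q \in \cD$ and a simple subquotient $P/P'$ of $M$. Intersecting with $N$ gives the exact sequence
\[
0 \to (P\cap N)/(P'\cap N) \to P/P' \to (P+N)/(P'+N) \to 0.
\]
By simplicity of $P/P'$, either $P/P'$ is isomorphic to a subquotient of $N$ or it is isomorphic to a subquotient of $M/N \cong Q$. In either case $P/P' \cong H$ for some $H \in X$.
\item For direct sums, it suffices (given closure under extensions and quotients) to treat arbitrary directed colimits, or directly a sum $M=\sum_\alpha M_\alpha$ of subobjects in $\cD$ indexed by a directed family.
\end{enumerate}

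The main obstacle is step (c). Let $P/P'$ be simple with $P' \subsetneq P \subset M$. By (AB5), $P = \sum_\alpha (P\cap M_\alpha)$, so some $P \cap M_\alpha \not\subset P'$, and directedness lets us pick such an $\alpha$. Then $(P\cap M_\alpha)/(P'\cap M_\alpha) \hookrightarrow P/P'$ is a nonzero map into a simple object, hence an isomorphism. Thus $P/P'$ is a subquotient of $M_\alpha \in \cD$, so it is isomorphic to some $H \in X$. This argument uses only (AB5) and no finiteness assumption. For an arbitrary direct sum $\bigoplus M_i$, one applies this argument to the directed family of finite partial sums, which lie in $\cD$ by step (b).
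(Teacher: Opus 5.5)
Your argument is correct, but it takes a different route from the paper.

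The paper works with the class $\cE$ of objects $M$ such that $\Hom(M,E)=0$ for every injective $E$ with $\Hom(H,E)=0$ for all $H\in X$. Checking that $\cE$ is localizing is purely formal:
\begin{itemize}
\item subobjects are handled by injectivity of $E$;
\item quotients and extensions by left exactness of $\Hom(-,E)$;
\item sums by $\Hom(\bigoplus M_i,E)=\prod\Hom(M_i,E)$.
\end{itemize}
The conclusion then comes from the injective envelope $E(L)$. Some $H\in X$ has a nonzero map to $E(L)$, and since the socle of $E(L)$ is the simple essential subobject $L$, this forces $H\cong L$.

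You instead use the intrinsic class $\cD$ of objects all of whose simple subquotients lie in $X$. This needs no injective envelopes, only (AB5) and the modular law, so it works in any AB5 abelian category. The cost is the lattice computations in your steps (b) and (c), which you carry out correctly. In (c), directedness is exactly what gives $P=\sum_\alpha(P\cap M_\alpha)$ via (AB5); after that, the existence of $\alpha$ with $P\cap M_\alpha\not\subset P'$ is immediate.

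The two classes also differ in size. The paper's $\cE$ in fact coincides with the localizing subcategory generated by $X$, by the standard torsion-theory argument using $E(M/t(M))$. Your $\cD$ is in general strictly larger: for $R=k[[t]]$ and $X=\{k\}$ it is all of $\Mod_R$, whereas the generated localizing subcategory consists only of the $t$-power torsion modules. Both proofs only use the containment of the generated localizing subcategory in the auxiliary class, so this difference affects neither argument.
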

\begin{proof}
    Let $\cE$ be the full subcategory on objects $M$ such that for all injective objects $E$ in $\cA$ satisfying $\Hom(H, E)$ = 0 for all $H \in X$, we have $\Hom(M, E) = 0$ as well. We claim that $\cE$ is a Serre subcategory. Closure under quotients and extensions is clear. Now, assume $M' \subset M$ where $M$ lies in $\cE$, and $E$ is an injective such that $\Hom(H, E) = 0$ for all $H \in X$. Given a map $M' \to E$, by injectivity of $E$, this extends to a map $M \to E$ which must be zero as $M$ lies in $\cE$, so the original map $M' \to E$ is also zero, hence $M'$ also lies in $\cE$. The subcategory $\cE$ is also localizing as it is closed under direct sums: given a family of objects $\{M_i\}_{i \in J}$ with each $M_i$ lying in $\cE$ and an injective $E$ with $\Hom(H, E) = 0$ for all $H \in X$, we have $\Hom(\bigoplus_{i \in J} M_i, E) = \prod_{i \in J} \Hom(M_i, E) = \prod 0$. Thus, we see that the localizing subcategory generated by $X$ is contained in $\cE$.

    Given a simple object $L$ in $\cE$, let $E(L)$ be its injective envelope. Of course $\Hom(L, E(L)) \ne 0$. So by the description of $\cE$, there exists $H \in X$ such that $\Hom(H, E(L)) \ne 0$. However, the socle of $E(L)$ is simple whence we get $L \cong H$, as required.
\end{proof}



\end{document}